\numberwithin{equation}{section}
\newtheorem{theorem}{Theorem}[section]
\newtheorem{lemma}[theorem]{Lemma}
\newtheorem{proposition}[theorem]{Proposition}
\newtheorem{corollary}[theorem]{Corollary}
\newtheorem{conjecture}[theorem]{Conjecture}
\theoremstyle{definition}
\newtheorem{definition}[theorem]{Definition} 
\newtheorem{remark}[theorem]{Remark}
\newtheorem{example}[theorem]{Example}
\begin{document}

%%%%%%%%%%%%%%%%%%%%%%%%%%%%%%%%%%%%%%%%%%%%%%%%%%%%%%%%%%%%%%%%%%%%%

\title[Algebraic and
combinatorial properties of ideals and algebras]{Algebraic and
combinatorial properties of ideals and 
algebras of uniform clutters of TDI systems} 

\author{Luis A. Dupont}
\address{
Departamento de
Matem\'aticas\\
Centro de Investigaci\'on y de Estudios Avanzados del
IPN\\
Apartado Postal
14--740 \\
07000 Mexico City, D.F. }\email{ldupont@math.cinvestav.mx}
\thanks{The second author was partially supported by CONACyT 
grant 49251-F and SNI}

\author{Rafael H. Villarreal}
\address{
Departamento de
Matem\'aticas\\
Centro de Investigaci\'on y de Estudios
Avanzados del
IPN\\
Apartado Postal
14--740 \\
07000 Mexico City, D.F.
}
\email{vila@math.cinvestav.mx}
\urladdr{http://www.math.cinvestav.mx/$\sim$vila/}

\keywords{Uniform clutter, max-flow min-cut, normality, Rees
algebra, Ehrhart ring, balanced matrix, edge ideal, 
Hilbert bases, Smith normal form, unimodular regular
triangulation}
\subjclass[2000]{Primary 13H10; Secondary 13F20, 13B22, 52B20.} 

\begin{abstract} Let $\mathcal{C}$ be a uniform clutter 
and let $A$ be the incidence
matrix of 
$\mathcal{C}$. We denote the column vectors of $A$ by
$v_1,\ldots,v_q$. Under certain conditions we 
prove that $\mathcal{C}$ is vertex critical. If $\mathcal C$ satisfies
the max-flow 
min-cut property, we 
prove that $A$ diagonalizes over $\mathbb{Z}$ to an identity matrix
and that  
$v_1,\ldots,v_q$ form a Hilbert basis.  We also prove that if
$\mathcal C$ 
has a perfect matching such that $\mathcal{C}$ has the packing
property and its vertex covering number is equal to $2$, then $A$
diagonalizes over $\mathbb{Z}$ to an identity matrix. If $A$ is a
balanced matrix we 
prove that any regular triangulation of the cone generated by
$v_1,\ldots,v_q$ is unimodular. Some examples are presented to 
show that our results only hold for uniform clutters. 
These results are closely related to certain algebraic 
properties, such as the normality or torsion-freeness, of blowup
algebras of edge ideals and to finitely generated abelian groups. 
They are also related to the theory of Gr\"obner bases of toric
ideals and to Ehrhart rings.
\end{abstract}

\maketitle

\section{Introduction}
Let $R=K[x_1,\ldots,x_n]$ be a polynomial ring 
over a field $K$ and let $I$ be an ideal 
of $R$ of height $g$ minimally generated by a finite set 
$$
F=\{x^{v_1},\ldots,x^{v_q}\}
$$
of square-free monomials. As usual we
use the notation $x^a:=x_1^{a_1} \cdots x_n^{a_n}$, 
where $a=(a_i)\in \mathbb{N}^n$. The {\it support\/} of
$x^a$ is given by ${\rm supp}(x^a)= \{x_i\, |\, a_i>0\}$. For
technical reasons we 
shall assume that each variable $x_i$ occurs in at least one
monomial of $F$. 

A {\it clutter\/} $\mathcal{C}$ with finite vertex set 
$X$ is a family $E$ of subsets of $X$, called edges, none 
of which is included in another. The set of
vertices and edges of $\mathcal{C}$ are denoted by $X=V(\mathcal{C})$
and $E=E(\mathcal{C})$ respectively. Clutters are special types of
hypergraphs and are sometimes called 
{\em Sperner families} in the literature. One example of a clutter is
a graph with the vertices 
and edges defined in the usual way for graphs.
For a thorough study of clutters and
hypergraphs from the point of view of combinatorial optimization see
\cite{cornu-book,Schr2}.   

We associate to the 
ideal $I$ a {\it clutter\/} $\mathcal C$ by taking the set 
of indeterminates $X=\{x_1,\ldots,x_n\}$ as vertex set and 
$E=\{f_1,\ldots,f_q\}$ as edge set, where $f_k$ is the support of
$x^{v_k}$. The assignment $I\mapsto \mathcal C$ gives a natural one to one
correspondence between the family of square-free monomial ideals and
the family of clutters. The ideal $I$ is called the {\it edge
ideal\/} of $\mathcal{C}$. 
To stress the relationship 
between $I$ and $\mathcal C$ we will use the notation $I=I({\mathcal C})$. 
The $\{0,1\}$-vector $v_k$ is the so called {\it characteristic
vector\/} or {\it incidence vector\/} of $f_k$, i.e., 
$v_k=\sum_{x_i\in f_k}e_i$, where $e_i$ is the $i${\it th} unit
vector. We shall always assume that $\mathcal{C}$ has no isolated
vertices, i.e., each vertex $x_i$ occurs in at least one edge of
$\mathcal{C}$.  

Let $A$ be the {\it incidence matrix\/} of $\mathcal C$ whose column 
vectors are $v_1,\ldots,v_q$. The {\it set covering polyhedron\/} 
 of ${\mathcal C}$ is given by:
$$
Q(A)=\{x\in\mathbb{R}^n\vert\, x\geq 0;\, xA\geq{\mathbf 1}\},
$$
where $\mathbf{1}=(1,\ldots,1)$. A subset $C\subset X$ is called a 
{\it minimal vertex cover\/} of $\mathcal C$ if: 
(i) every edge of $\mathcal C$ contains at least one vertex of $C$, 
and (ii) there is no proper subset of $C$ with the first 
property. The map $C\mapsto \sum_{x_i\in C}e_i$ gives a bijection
between the minimal vertex covers of $\mathcal{C}$ and the integral
vertices of $Q(A)$. A polyhedron is called an {\it
integral polyhedron\/} if it has only integral vertices. A clutter is called 
$d$-{\it uniform\/} or {\it uniform\/} if all its edges have 
exactly $d$ vertices.

The contents of this paper are as follows. We begin in
Section~\ref{section2} by introducing various combinatorial 
properties of clutters. We then give a simple combinatorial proof
of the following result of \cite{reesclu}: 

\medskip
\noindent {\bf Lemma~\ref{nov19-03}, Proposition~\ref{nov20-03}.}{\em\ 
If $\mathcal{C}$ is a 
$d$-uniform clutter whose set covering polyhedron $Q(A)$ is integral, then 
there are $X_1,\ldots,X_d$ mutually disjoint minimal vertex covers of
$\mathcal C$ such  
that $X=\cup_{i=1}^d X_i$. In particular $|{\rm supp}(x^{v_i})\cap
X_k|=1$ for 
all $i,k$.}

\medskip

The original proof of this result was algebraic, it was based on the
fact that the radical of the ideal $IR[It]$ can be expressed in terms
of the minimal primes of $I$, where $R[It]$ is the Rees 
algebra of $I$ (see Section~\ref{algebras-tdi}).
Example~\ref{double-contraej} shows that this result 
fails if we drop the uniformity hypothesis. 
For use below we denote the smallest number of vertices in any 
minimal vertex cover of $\mathcal C$ by $\alpha_0({\mathcal C})$. The
clutter obtained from $\mathcal{C}$ by
deleting a vertex $x_i$ and removing all edges containing $x_i$ is
denoted by $\mathcal{C}\setminus\{x_i\}$. A clutter $\mathcal C$ is 
called {\it vertex critical\/} if $\alpha_0({\mathcal
C}\setminus\{x_i\})<\alpha_0({\mathcal C})$ 
for all $i$. A set of pairwise disjoint
edges of $\mathcal C$ is called 
{\it independent} or a {\it matching\/} and a set
of independent edges of $\mathcal{C}$ whose union is $X$ is called a 
{\it perfect matching\/}.
We then prove:

\medskip

\noindent {\bf Proposition~\ref{integral+uniforme+pm}.}{\it\ 
Let $\mathcal{C}$ be a $d$-uniform clutter with a perfect matching such
that $Q(A)$ is integral. Then $\mathcal C$ is vertex critical.}

\medskip

A simple example is shown to see that this result fails for non uniform
clutters with integral set covering polyhedron 
(Remark~\ref{double-contraej-re}). 

In Section~\ref{algebras-tdi} we
introduce Rees algebras, Ehrhart rings, and edge subrings. 
Certain algebraic properties of these graded algebras
such as the normality and torsion-freeness are related to
combinatorial optimization  properties of clutters such as the
max-flow min-cut property (see Definition~\ref{mfmc-def}) and the
integrality of $Q(A)$ 
\cite{reesclu,clutters}. This relation between algebra and 
combinatorics will be quite useful here. In Theorems~\ref{intiffint},
\ref{noclu1},  
 and Proposition~\ref{two-char} we summarize the 
algebro-combinatorial facts needed to show the main result of
Section~\ref{algebras-tdi}:  

\medskip
\noindent {\bf Theorem~\ref{mfmch->diag}.}{\it\ If $\mathcal C$ is a
uniform clutter 
with the max-flow min-cut property, then 

{\rm (a)} $\Delta_r(A)=1$, where $r={\rm rank}(A)$.

{\rm (b)} $\mathbb{N}{\mathcal A}=\mathbb{R}_+{\mathcal
A}\cap\mathbb{Z}^n$, where ${\mathcal A}=\{v_1,\ldots,v_q\}$.} 
\medskip

Here $\Delta_r(A)$ denotes the greatest common divisor of all the
nonzero $r\times r$ sub-determinants of $A$, 
$\mathbb{N}{\mathcal A}$ denotes the semigroup generated by $\mathcal{A}$,
and $\mathbb{R}_+{\mathcal A}$ denotes the cone generated by
$\mathcal{A}$. Condition (b) means that $\mathcal{A}$ is a {\it Hilbert
basis\/} for $\mathbb{R}_+\mathcal{A}$. As interesting consequences 
we obtain that if 
$\mathcal{C}$ is a $d$-uniform clutter
with the max-flow min-cut property, then $A$ diagonalizes over
$\mathbb{Z}$---using row and column operations---to an identity matrix (see
Corollary~\ref{mfmc->smith=identity}) and $\mathcal{C}$ has a perfect
matching if and only if $n=d\alpha_0(\mathcal{C})$ (see
Corollary~\ref{pm-iff-n=gd}). In Example~\ref{unif-ess-mfmch} we show that 
the uniformity hypothesis is essential in the two statements of 
Theorem~\ref{mfmch->diag}. 

Section~\ref{section-packing-partition} deals with the 
diagonalization problem (see Conjecture~\ref{git-val-vi}) for
clutters with the packing 
property (see Definition~\ref{pp-def}). The following is one of the
main results of this section, it gives some support to
Conjecture~\ref{git-val-vi}.

\medskip

\noindent {\bf Theorem~\ref{jan18-04}.}{\it\ Let $\mathcal{C}$ be a 
$d$-uniform clutter with a perfect matching such that
$\mathcal{C}$ has the packing property and $\alpha_0(\mathcal{C})=2$.
If $A$ has rank $r$, 
then 
$$
\Delta_r\left(\hspace{-2mm}
\begin{array}{c}A\\ 
\mathbf{1}\end{array}\hspace{-2mm}\right)=1.
$$}

As an application we obtain the next result which gives some support
to a Conjecture of Conforti and Cornu\'ejols \cite{CC} (see
Conjecture~\ref{conforti-cornuejols1}).  

\medskip

\noindent {\bf Corollary~\ref{cc-conj-supp}.}{\it\ Let $\mathcal{C}$ be a 
$d$-uniform clutter with a perfect matching such that 
$\mathcal{C}$ has the packing property and $\alpha_0(\mathcal{C})=2$.
If $v_1,\ldots,v_q$ are linearly independent, then $\mathcal C$ has the
max-flow min-cut property.}

\medskip

All clutters of Section~\ref{section-packing-partition} satisfy
the hypotheses of Theorem~\ref{another-mt}. We call this type of
clutters $2$-{\it partitionable} (see Example~\ref{q6}). They occur
naturally in the theory of blockers of unmixed bipartite graphs (see
Corollary~\ref{may17-09}). The
other main result of 
Section~\ref{section-packing-partition} is about some of the
properties of this family of clutters: 

\medskip
\noindent {\bf Theorem~\ref{another-mt}.}{\it\ Let $\mathcal{C}$ be a
$d$-uniform clutter with a 
partition $X_1,\ldots,X_d$ of $X$ such that $X_i$ is a minimal vertex
cover of $\mathcal{C}$ and $|X_i|=2$ for all $i$. Then {\rm (a)}
${\rm rank}(A)\leq d+1$.  
{\rm (b)} If $C$ is a minimal vertex cover of $\mathcal{C}$, 
then $2\leq|C|\leq d$. {\rm(c)} If $\mathcal C$ satisfies the K\"onig
property and there is 
a minimal vertex cover $C$ of $\mathcal C$ with $|C|=d\geq 3$, then
${\rm rank}(A)=d+1$. {\rm (d)} If $I=I(\mathcal{C})$ is minimally
non-normal and 
$\mathcal{C}$ satisfies the packing property, then ${\rm rank}(A)=d+1$.
}

\medskip

Regular and unimodular triangulations are introduced in 
Section~\ref{section-trian-balanced}. Let $K[F]$ be the monomial
subring of 
$R$ generated by $F=\{x^{v_1},\ldots,x^{v_q}\}$. There is a 
relationship between the Gr\"obner bases of the toric ideal of 
$K[F]$ and the triangulations of ${\mathcal A}=\{v_1,\ldots,v_q\}$, 
which has many interesting
applications. We make use of the theory of Gr\"obner bases and convex
polytopes, which was created and developed by Sturmfels \cite{Stur1},
to prove the following main result of
Section~\ref{section-trian-balanced}:

\medskip

\noindent {\bf Theorem~\ref{balanced->urt}.}{\it\ Let $A$ be a 
balanced matrix with distinct  
column vectors $v_1,\ldots,v_q$. If $|v_i|=d$ for all $i$, then any  
regular triangulation of the cone $\mathbb{R}_+\{v_1,\ldots,v_q\}$ 
is unimodular.}

\medskip

Here $|v_i|$ denotes the sum of the entries of the vector $v_i$. 
Recall that a matrix $A$ with entries in $\{0,1\}$ is 
called {\it balanced\/} if $A$ has no square submatrix of odd order
with exactly two $1$'s in each row and column. If we do not require
the uniformity condition $|v_i|=d$ for all $i$ this result is false,
as is seen in Example~\ref{balanced-nu-example}. What makes this
result surprising is the fact that not all balanced matrices are  
unimodular (see Example~\ref{balanced-nu-example}). 
This result gives some support to
Conjecture~\ref{vila-mfmc-unimodular}: If $\mathcal C$ is a 
uniform clutter that satisfies the max-flow 
min-cut property, then the rational polyhedral cone 
$\mathbb{R}_+\{v_1,\ldots,v_q\}$ has a unimodular regular 
triangulation. 

Throughout the paper we introduce most of the 
notions that are relevant for our purposes. For unexplained
terminology and 
notation we refer to  
\cite{Schr2} (for the theory of combinatorial optimization) 
and \cite{BHer,bookthree} (for the theory of blowup algebras and
integral closures). See \cite{Mat}
for additional information about commutative rings and ideals.  

\section{On the structure of ideal uniform clutters}\label{section2}

We continue to use the notation and definitions used in the
introduction. In what follows $\mathcal{C}$ denotes a $d$-uniform
clutter with vertex set
$X=\{x_1,\ldots,x_n\}$, edge set $E(\mathcal{C})$, edge ideal
$I=I(\mathcal{C})$, and incidence matrix
$A$. The column vectors of $A$ are denoted by $v_1,\ldots,v_q$ and
the edge  ideal of $\mathcal{C}$ is given by
$I=(x^{v_1},\ldots,x^{v_q})$.  

In this section we study the structure of uniform clutters whose set
covering polyhedron is integral. Examples of this type of clutters 
include bipartite graphs and uniform clutters with the max-flow
min-cut property. A clutter whose set covering polyhedron is 
integral is called {\it ideal\/} in the literature \cite{cornu-book}.
We denote the smallest number of vertices in any 
minimal vertex cover of $\mathcal C$ by $\alpha_0({\mathcal C})$ and the 
maximum number of independent edges of ${\mathcal C}$ by 
$\beta_1({\mathcal C})$. These two numbers are called the 
{\it vertex covering number\/} and the {\it edge independence
number\/} respectively. Notice that in general 
$\beta_1({\mathcal C})\leq \alpha_0({\mathcal C})$. If equality
occurs we say 
that $\mathcal{C}$ has the {\it K\"onig property}. 

Recall that $\mathfrak{p}$ is a minimal prime of $I =I(\mathcal{C})$
if and only if  
$\mathfrak{p}=(C)$ for some minimal vertex cover $C$ of $\mathcal{C}$ 
\cite[Proposition~6.1.16]{monalg}, where $(C)$ is the ideal of $R$
generated by $C$. 
Thus the primary decomposition of
the edge ideal of $\mathcal{C}$ is given by
$$
I(\mathcal{C})=(C_1)\cap (C_2)\cap\cdots\cap (C_s),
$$
where $C_1,\ldots,C_s$ are the minimal vertex 
covers of $\mathcal{C}$. 
In particular observe that 
${\rm ht}\, I(\mathcal{C})$, the height of 
$I(\mathcal{C})$, equals the number of vertices in a minimum vertex
cover of $\mathcal{C}$, i.e., ${\rm ht}\,
I(\mathcal{C})=\alpha_0(\mathcal{C})$. This is a hint of the rich 
interaction between the combinatorics of
$\mathcal{C}$ 
and the algebra of $I(\mathcal{C})$.

\medskip

The next result was shown in \cite{reesclu} using commutative algebra
methods.  
Here we give a simple combinatorial proof. 

\begin{lemma}{\rm\cite{reesclu}}\label{nov19-03} If ${\mathcal C}$ is a
$d$-uniform clutter such that $Q(A)$ is integral, then there exists a
minimal vertex cover of $\mathcal C$  
intersecting every edge of $\mathcal C$ in exactly one vertex. 
\end{lemma}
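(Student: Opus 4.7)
The plan is to realize the desired vertex cover as the support of an integral vertex of a well-chosen face of $Q(A)$. The key observation is that, because each column of $A$ has exactly $d$ ones, the fractional point $x^{0}:=(1/d)\mathbf{1}$ satisfies $x^{0}A=\mathbf{1}$; in particular $x^{0}\in Q(A)$ and every covering inequality $xv_j\geq 1$ is tight at $x^{0}$.

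Next I would consider the face
\[
F=\{x\in Q(A)\colon xA=\mathbf{1}\}
\]
of $Q(A)$. Since no vertex of $\mathcal{C}$ is isolated, each coordinate $x_i$ appears in some edge $f_j$, and the equation $\sum_{x_i\in f_j}x_i=1$ together with $x\geq 0$ forces $0\leq x_i\leq 1$ on $F$. Hence $F$ is a nonempty bounded polytope and therefore has a vertex $x^{*}$. Because $F$ is a face of $Q(A)$, every vertex of $F$ is also a vertex of $Q(A)$, and the integrality hypothesis then promotes $x^{*}$ to a point of $\mathbb{Z}^{n}$. For each edge $f_j$ the relation $\sum_{x_i\in f_j}x^{*}_i=1$ now has non-negative integer summands, so exactly one term equals $1$ and the rest vanish. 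Setting $C:={\rm supp}(x^{*})$, this says $|C\cap f_j|=1$ for every edge $f_j$, so $C$ is a vertex cover; moreover, any $x_i\in C$ is the unique $C$-cover of every edge containing it (and at least one such edge exists, since $x_i$ is not isolated), so $C$ is minimal.

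The only genuine technical ingredient is the standard polyhedral fact that a vertex of a face of $Q(A)$ is a vertex of $Q(A)$; beyond that, everything is forced by the $d$-uniformity, which supplies the feasible point $x^{0}$, and by the integrality of $Q(A)$, which upgrades the vertex of $F$ to a lattice point. I do not anticipate any substantial obstacle.
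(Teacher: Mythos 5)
Your proof is correct, and it takes a genuinely different and more elementary route than the paper's. The paper invokes blocking duality (the Lehman/Fulkerson theorem, \cite[Theorem~1.17]{cornu-book}) to transfer integrality to the blocker $Q(B)$, then argues by contradiction via a clever averaging of the $v_{i_k}$'s to violate the $d$-uniformity. You instead observe directly that $d$-uniformity makes the barycentric point $\frac{1}{d}\mathbf{1}$ tight for every covering inequality, so the face $F=\{x\in Q(A): xA=\mathbf{1}\}$ is nonempty; boundedness of $F$ (forced by $x\geq 0$ together with the tight equalities) gives a vertex, which is a vertex of $Q(A)$ and hence integral, and the $\{0,1\}$-vertex $x^*$ with $x^*A=\mathbf{1}$ is exactly the characteristic vector of the desired minimal cover. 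Your approach is shorter, avoids the blocking-duality machinery entirely, and is constructive rather than by contradiction; the paper's approach has the advantage of exhibiting the structure of the blocker $Q(B)$ explicitly, which is used again in the proofs of Proposition~\ref{integral+uniforme+pm} and Corollary~\ref{pm-iff-n=gd}, so it fits a theme reused later in the paper. The only standard facts you appeal to — that a nonempty bounded polyhedron has a vertex and that a vertex of a face is a vertex of the polyhedron — are both correct and need no further justification.
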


\begin{proof}  Let $B$ be the integral matrix whose columns are the
vertices of $Q(A)$. It is not hard to show that {\it a vector 
$\alpha\in\mathbb{R}^n$ is an integral vertex of $Q(A)$ if and only if 
$\alpha=\sum_{x_i\in C}e_i$ for some minimal vertex cover 
$C$ of $\mathcal{C}$}. Thus the columns of $B$ are the characteristic
vectors of the minimal vertex covers of $\mathcal C$. 
Using \cite[Theorem~1.17]{cornu-book} we get that 
$$
Q(B)=\{x\vert\,
x\geq 0;xB\geq\mathbf{1}\}
$$ 
is an integral
polyhedron whose vertices are the columns of $A$. Therefore we have
the equality 
\begin{equation}\label{may10-09}
Q(B)=\mathbb{R}_+^n+{\rm conv}(v_1,\ldots,v_q).
\end{equation}

We proceed by contradiction. Assume that for each column $u_k$
of $B$ there 
exists a vector $v_{i_k}$ in $\{v_1,\ldots,v_q\}$ such that
$\langle v_{i_k},u_k\rangle\geq 2$. Then
$$
v_{i_k}B\geq \mathbf{1}+e_k.
$$
Consider the vector $\alpha=v_{i_1}+\cdots+v_{i_s}$, where $s$ is the
number of columns of $B$. From the
inequality 
$$
{\alpha}B\geq (\mathbf{1}+e_1)+\cdots+(\mathbf{1}+e_s)=(s+1,\ldots,s+1)
$$
we obtain that $\alpha/(s+1)\in Q(B)$. Thus, using
Eq.~(\ref{may10-09}), 
we can write 
\begin{equation}\label{may19-09}
\alpha/(s+1)=\mu_1e_1+\cdots+\mu_ne_n+\lambda_1v_1+\cdots+\lambda_qv_q\
\ \ \ (\mu_i,\lambda_j\geq 0;\ \textstyle\sum\lambda_i=1).
\end{equation}
Therefore taking inner products with $\mathbf{1}$ in
Eq.~(\ref{may19-09}) and using the fact that $\mathcal{C}$ is $d$-uniform 
we get that $|\alpha|\geq (s+1)d$. Then using the equality
$\alpha=v_{i_1}+\cdots+v_{i_s}$ we conclude
$$
sd=|v_{i_1}|+\cdots+|v_{i_s}|=|\alpha|\geq (s+1)d,
$$
a contradiction because $d\geq 1$. \end{proof}  

A graph $G$ is called {\em strongly perfect} if every induced 
subgraph $H$ of $G$ has a maximal independent set of vertices $F$ such that 
$|F\cap K|=1$ for any maximal clique $K$ of $H$. Bipartite and 
chordal graphs are strongly perfect. If $A$ is the vertex-clique
matrix of a graph $G$, then $G$  
being strongly perfect implies that the clique polytope of $G$,  
$\{x\vert\, x \geq 0;\,  xA\leq\mathbf{1} \}$, has a vertex 
that intersects every maximal clique.  In this sense, uniform
clutters such that $Q(A)$ is integral 
can be thought of as being analogous to strongly perfect graphs.

The notion of a minor plays a prominent role in 
combinatorial optimization \cite{cornu-book}. Recall that a proper
ideal $I'$ of $R$ is called a {\it minor\/} of $I=I(\mathcal{C})$ if there 
is a subset 
$$
X'=\{x_{i_1},\ldots,x_{i_r},x_{j_1},\ldots,x_{j_s}\}
$$
of the set of variables $X$ such that $I'$ is obtained from $I$ by 
making $x_{i_k}=0$ and $x_{j_\ell}=1$ for all $k,\ell$. Notice that a set
of generators $x^{v_1'},\ldots,x^{v_q'}$ of $I'$ is 
obtained from a set of generators $x^{v_1},\ldots,x^{v_q}$ of
$I$ by making $x_{i_k}=0$ and $x_{j_\ell}=1$ for all $k,\ell$. The
ideal $I$ is 
considered itself a minor.  A clutter $\mathcal{C}'$ is called a {\it
minor\/} of $\mathcal C$ if $\mathcal{C}'$ 
corresponds to a minor $I'$ of $I$ under the correspondence between
square-free monomial ideals and clutters.  This terminology is 
consistent with that of \cite[p.~23]{cornu-book}. Also notice 
that ${\mathcal C}'$ is obtained from $I'$ by considering the unique set 
of square-free monomials of $R$ that minimally generate $I'$. The
clutter ${\mathcal C}\setminus\{x_i\}$ corresponds to the ideal $I'$
obtained from $I$ 
by making $x_i=0$, i.e., $\mathcal{C}\setminus\{x_i\}$ is a special
type of a minor which is called a {\it deletion\/}.

The notion of a minor of a clutter is not a generalization of the notion of a
minor of a graph in the sense of graph theory \cite[p.~25]{Schr2}.
For instance if  
$G$ is a cycle of length four and we contract an edge we obtain that 
a triangle is a minor of $G$, but a triangle cannot be a minor of $G$
in our sense.  

\begin{proposition}\label{nov20-03} If $ \mathcal{C}$ is a
$d$-uniform clutter whose set covering polyhedron $Q(A)$ is integral, then 
there are $X_1,\ldots,X_d$ mutually disjoint minimal vertex covers of
$\mathcal C$ such  
that $X=\cup_{i=1}^d X_i$. 
\end{proposition}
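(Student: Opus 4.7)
My plan is to induct on $d$, peeling off one minimal vertex cover at a time using Lemma~\ref{nov19-03}.

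The base case $d=1$ is immediate: every edge is a singleton, so $X$ itself is the unique minimal vertex cover and I set $X_1 := X$.

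For $d\geq 2$, I apply Lemma~\ref{nov19-03} to obtain a minimal vertex cover $X_1$ of $\mathcal{C}$ with $|f\cap X_1|=1$ for every $f\in E(\mathcal{C})$. I then form the contraction minor $\mathcal{C}'$ on vertex set $X\setminus X_1$ whose edges are $\{f\setminus X_1 : f\in E(\mathcal{C})\}$ (identifying coincidences); in the language of the excerpt this is the minor of $I=I(\mathcal{C})$ obtained by setting $x_j=1$ for every $x_j\in X_1$. Because $X_1$ meets each edge in exactly one vertex, $\mathcal{C}'$ is a $(d-1)$-uniform clutter, and no vertex of $X\setminus X_1$ is isolated in $\mathcal{C}'$ since each such vertex already appears in some edge of $\mathcal{C}$.

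The key technical point is that $Q(A')$ is integral, where $A'$ denotes the incidence matrix of $\mathcal{C}'$. For this I would invoke the standard fact from combinatorial optimization that the class of ideal clutters is closed under taking minors (see \cite[Section~1.7]{cornu-book}). Granted this, the inductive hypothesis applied to $\mathcal{C}'$ produces pairwise disjoint minimal vertex covers $X_2,\ldots,X_d$ of $\mathcal{C}'$ with $X\setminus X_1=\bigcup_{i=2}^d X_i$. To finish I need to verify that each $X_i$ with $i\geq 2$ is also a minimal vertex cover of $\mathcal{C}$: it is a cover because $X_i\subseteq X\setminus X_1$ meets every $f\setminus X_1\in E(\mathcal{C}')$ and hence every $f\in E(\mathcal{C})$; any proper subset $C\subsetneq X_i$ is disjoint from $X_1$ and fails to meet some $f\setminus X_1\in E(\mathcal{C}')$ by minimality of $X_i$ in $\mathcal{C}'$, hence fails to meet the corresponding $f\in E(\mathcal{C})$. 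Combining, $X_1,\ldots,X_d$ is the required partition.

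The only real obstacle is the integrality of $Q(A')$. If one prefers to avoid citing the closure of ideal clutters under minors, this can be argued directly from equation~(\ref{may10-09}): the minimal vertex covers of $\mathcal{C}'$ are exactly the minimal vertex covers of $\mathcal{C}$ that are disjoint from $X_1$, so integrality of $Q(A)$ transfers to $Q(A')$ by restricting the corresponding characteristic vectors to the coordinates indexed by $X\setminus X_1$.
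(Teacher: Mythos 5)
Your main argument is the same as the paper's: the same induction on $d$, the same use of Lemma~\ref{nov19-03} to peel off $X_1$, the same contraction minor obtained by setting $x_j=1$ on $X_1$, the same appeal to the fact that idealness is preserved under minors (the paper cites \cite[Theorem~78.2]{Schr2}), and the same two-sided check that the $X_i$ with $i\geq 2$ remain minimal vertex covers of $\mathcal{C}$. One caveat on your closing paragraph: the alternative you sketch for the integrality of $Q(A')$ does not actually work as stated. Identifying the minimal vertex covers of $\mathcal{C}'$ with those of $\mathcal{C}$ disjoint from $X_1$ tells you what the \emph{integral} vertices of $Q(A')$ are, but it does not by itself rule out fractional vertices; that is precisely the content of the theorem that ideal clutters are closed under contraction, so the citation in your main argument is not dispensable in the way you suggest.
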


\begin{proof}  By induction on $d$. If $d=1$, then
$E(\mathcal{C})=\{\{x_1\},\ldots,\{x_n\}\}$ and $X$ is a minimal
vertex cover of $\mathcal{C}$. In this case we set $X_1=X$. 
Assume $d\geq 2$. By
Lemma~\ref{nov19-03} there is a minimal vertex  
cover $X_1$ of $\mathcal C$ such that $|{\rm supp}(x^{v_i})\cap
X_1|=1$ for all  
$i$. Consider the ideal $I'$ obtained from $I$ by making $x_i=1$ for 
$x_i\in X_1$. Let $\mathcal{C}'$ be the clutter corresponding to $I'$
and let $A'$ be the incidence matrix of $\mathcal{C}'$. The ideal $I'$
(resp. the clutter $\mathcal{C}'$) is a minor of $I$ (resp.
$\mathcal{C}$). Recall that the integrality of $Q(A)$ is preserved
under taking 
minors \cite[Theorem~78.2]{Schr2}, so $Q(A')$ is integral. Then
$\mathcal{C}'$ 
is a $(d-1)$-uniform clutter whose set covering polyhedron $Q(A')$ is
integral. Note that $V(\mathcal{C}')=X\setminus X_1$. 
Therefore by induction hypothesis there are $X_2,\ldots,X_d$
pairwise disjoint minimal vertex covers of $\mathcal{C}'$ such that 
$X\setminus X_1=X_2\cup\cdots\cup X_d$. To complete the proof 
observe that $X_2,\ldots,X_d$ are minimal vertex covers of
$\mathcal{C}$. Indeed if $e$ is an edge of $\mathcal{C}$ and $2\leq
k\leq d$, then $e\cap X_1=\{x_i\}$ for some $i$. Since
$e\setminus\{x_i\}$ is an edge of $\mathcal{C}'$, we get
$(e\setminus\{x_i\})\cap X_k\neq\emptyset$. Hence $X_k$ is a vertex cover of
$\mathcal{C}$. Furthermore if $x\in X_k$, then by the minimality of
$X_k$ relative to $\mathcal{C}'$ there is an edge $e'$ of
$\mathcal{C}'$ disjoint from 
$X_k\setminus\{x\}$. 
Since $e=e'\cup \{y\}$ is an edge of $\mathcal{C}$ for some $y\in
X_1$, we obtain that $e$ is an edge of $\mathcal{C}$ disjoint from 
$X_k\setminus\{x\}$. Therefore $X_k$ is a minimal vertex cover of
$\mathcal{C}$, as required.  \end{proof}  

\begin{example}\label{double-contraej}\rm 
Consider the clutter $\mathcal{C}$ with vertex set 
$X=\{x_1,\ldots,x_9\}$ whose edges are 
$$
\begin{array}{lllll}
f_1=\{x_1,x_2\},&f_2=\{x_3,x_4,x_5,x_6\},&f_3=\{x_7,x_8,x_9\},&  &
\\ 
f_4=\{x_1,x_3\},&f_5=\{x_2,x_4\},&f_6=\{x_5,x_7\},&f_7=\{x_6,x_8\}.&
\end{array}
$$
In this example $Q(A)$ is integral because the incidence matrix of
$\mathcal{C}$ is a balanced matrix. However 
$|C\cap f_i|\geq 2$ for any minimal vertex cover $C$ and for any $i$.
Thus the uniformity hypothesis is essential in
Proposition~\ref{nov20-03}. 
\end{example}

\begin{definition}\rm Let $X_1,\ldots,X_d$ be a partition of $X$. 
The matroid  whose collection of bases is 
$$
{\mathcal B}=\{\{y_1,\ldots,y_d\}\vert\, y_i\in X_i\, \mbox{ for }i=1,\ldots,d\}
$$ 
is called the {\it transversal matroid\/} defined by $X_1,\ldots,X_d$
and is denoted by $\mathcal{M}$.
\end{definition}

Recall that the set covering polyhedron of the clutter of bases of any 
transversal matroid is integral \cite[p.~92]{reesclu}. The next result 
generalizes the fact that
any bipartite graph is a subgraph of a complete bipartite graph. 

\begin{corollary} If $\mathcal{C}$ is a $d$-uniform clutter and $Q(A)$ is
integral, then there is a partition $X_1,\ldots,X_d$ of $X$ such that
$\mathcal C$ is a 
subclutter of the clutter of bases of the transversal
matroid $\mathcal{M}$ defined by $X_1,\ldots,X_d$. 
\end{corollary}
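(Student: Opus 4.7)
The plan is to apply Proposition~\ref{nov20-03} directly and then observe that the partition it produces automatically witnesses each edge of $\mathcal{C}$ as a basis of the associated transversal matroid.

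First I would invoke Proposition~\ref{nov20-03} to obtain mutually disjoint minimal vertex covers $X_1,\ldots,X_d$ of $\mathcal{C}$ with $X=X_1\cup\cdots\cup X_d$. Let $\mathcal{M}$ be the transversal matroid defined by this partition, whose bases are the $d$-element sets that pick exactly one vertex from each $X_i$.

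Next I would show that every edge $f$ of $\mathcal{C}$ is a basis of $\mathcal{M}$. The intro statement of Proposition~\ref{nov20-03} already records that $|{\rm supp}(x^{v_i})\cap X_k|=1$ for all $i,k$; equivalently $|f\cap X_k|=1$ for every edge $f$ and every $k$. This is immediate from the construction in the proof: the cover $X_1$ supplied by Lemma~\ref{nov19-03} meets each edge in exactly one vertex by choice, and after passing to the minor $\mathcal{C}'$ obtained by making $x_i=1$ for $x_i\in X_1$, the recursive hypothesis guarantees that each edge of $\mathcal{C}'$ meets each of $X_2,\ldots,X_d$ in exactly one vertex; since each edge of $\mathcal{C}$ is obtained from an edge of $\mathcal{C}'$ by adjoining exactly one vertex from $X_1$, the single-intersection property propagates to all of $X_1,\ldots,X_d$. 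Hence for every edge $f\in E(\mathcal{C})$ we may write $f=\{y_1,\ldots,y_d\}$ with $y_i\in X_i$, which is precisely a basis of $\mathcal{M}$.

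The conclusion is then that $E(\mathcal{C})\subseteq \mathcal{B}$, i.e., $\mathcal{C}$ is a subclutter of the clutter of bases of $\mathcal{M}$. There is no serious obstacle here: the whole content lies in Proposition~\ref{nov20-03}, and the only point to verify carefully is the transversality condition $|f\cap X_k|=1$, which is built into (and can be read off from) the inductive construction of the partition.
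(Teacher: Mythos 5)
Your proof is correct and matches the paper's intended (implicit) argument: invoke Proposition~\ref{nov20-03} to get the partition into minimal vertex covers $X_1,\ldots,X_d$, observe that each edge meets every $X_k$ in exactly one vertex, and conclude that every edge is a basis of $\mathcal{M}$. A slightly more economical way to get the single-intersection property, rather than retracing the induction inside Proposition~\ref{nov20-03}, is the pigeonhole argument: since the $X_k$ partition $X$, each $X_k$ is a vertex cover, and $|f|=d$, necessarily $|f\cap X_k|=1$ for all $k$.
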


The notion of a vertex critical clutter is the natural generalization
of the corresponding notion for graph \cite{Har}. The family of
vertex critical graphs has many nice properties, for instance if $G$
is a vertex critical graph with $n$ vertices, then $\alpha_0(G)\geq
n/2$ (see \cite[Theorem~3.11]{bounds}). If a clutter
$\mathcal{C}$ is vertex critical, then 
$\alpha_0({\mathcal C})=\alpha_0({\mathcal C}\setminus\{x_i\})+1$ for
all $i$. 

\begin{proposition}\label{integral+uniforme+pm} 
Let $\mathcal{C}$ be a $d$-uniform clutter with a perfect matching such
that $Q(A)$ is integral. Then $\mathcal C$ is vertex critical.
\end{proposition}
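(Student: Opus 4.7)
The plan is to leverage Proposition~\ref{nov20-03} together with the perfect matching to show that each $X_i$ in the partition is in fact a minimum vertex cover of size equal to the matching number, and then exhibit an explicit small cover for $\mathcal{C}\setminus\{x_j\}$.

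First I would invoke Proposition~\ref{nov20-03} to obtain a partition $X=X_1\cup\cdots\cup X_d$ into pairwise disjoint minimal vertex covers, with the key property (from Lemma~\ref{nov19-03}) that $|{\rm supp}(x^{v_i})\cap X_k|=1$ for all edges and all $k$. Let $e_1,\ldots,e_g$ be a perfect matching of $\mathcal{C}$, where $g=\beta_1(\mathcal{C})$; since the edges are $d$-element sets partitioning $X$, we have $n=dg$. Fixing any $k$, every edge $e_j$ of the matching meets $X_k$ in exactly one vertex, so counting gives $|X_k|=g$ for every $k$. Because $X_k$ is a vertex cover, this forces $\alpha_0(\mathcal{C})\le g$; combined with the general inequality $g=\beta_1(\mathcal{C})\le \alpha_0(\mathcal{C})$, we conclude $\alpha_0(\mathcal{C})=g$ and that every $X_k$ is a \emph{minimum} vertex cover.

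Next, to establish vertex criticality, pick any vertex $x_j\in X$; after relabeling we may assume $x_j\in X_1$. I would show that $X_1\setminus\{x_j\}$ is a vertex cover of $\mathcal{C}\setminus\{x_j\}$. Indeed, any edge $e$ of $\mathcal{C}\setminus\{x_j\}$ is an edge of $\mathcal{C}$ that avoids $x_j$; since $|e\cap X_1|=1$, the unique vertex of $e\cap X_1$ lies in $X_1\setminus\{x_j\}$. Therefore
\[
\alpha_0(\mathcal{C}\setminus\{x_j\})\le |X_1\setminus\{x_j\}|=g-1<g=\alpha_0(\mathcal{C}),
\]
which is exactly vertex criticality at $x_j$. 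Since $x_j$ was arbitrary, $\mathcal{C}$ is vertex critical.

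The steps are all short and essentially combinatorial; the only real subtlety is verifying that the deletion $X_1\setminus\{x_j\}$ still covers every surviving edge, which is immediate from the ``exactly one vertex per block'' property coming out of Proposition~\ref{nov20-03}. In particular, the whole argument rests on Proposition~\ref{nov20-03}, so the main obstacle is really already handled there; without the uniformity hypothesis (cf.\ Example~\ref{double-contraej}) the partition need not exist and the present proof would break down, which matches the intended remark that the statement fails for general ideal clutters.
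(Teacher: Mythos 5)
Your proof is correct and leads to the same conclusion as the paper's proof, but it replaces a polyhedral step with a purely combinatorial counting, which is worth noting. The paper establishes $|X_i|=\alpha_0(\mathcal{C})$ by first proving $n\geq d\,\alpha_0(\mathcal{C})$ via the vertex description of the integral polyhedron $Q(A)$ (writing $\mathbf{1}/d$ as a convex combination of incidence vectors of minimal covers plus a nonnegative vector), then combining with $n=rd$ from the matching and $\alpha_0\geq\beta_1\geq r$ to get $n=\alpha_0 d$ and finally $\sum|X_i|=n$ forces each $|X_i|=\alpha_0$. You instead count directly: each matching edge meets $X_k$ in exactly one vertex by the ``one per block'' property, the matching edges partition $X$, so $|X_k|$ equals the matching size $g$; since $X_k$ is a cover this yields $\alpha_0\leq g\leq\alpha_0$ at once. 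This is shorter and avoids any use of the polyhedral decomposition $Q(A)=\mathbb{R}_+^n+\mathrm{conv}(u_1,\dots,u_s)$, deferring all the geometry into the single invocation of Proposition~\ref{nov20-03}. The final deletion step is also essentially the paper's (the paper uses only that $C_i$ is a cover of $\mathcal{C}$ containing $x_i$; you invoke $|e\cap X_1|=1$ explicitly, which is equivalent and equally short). The trade-off is minimal: the paper's version also re-derives the K\"onig property as a named intermediate fact, which it reuses in Proposition~\ref{integral+uniforme+pm-cor} and Corollary~\ref{pm-iff-n=gd}, whereas your version gets it implicitly as $\alpha_0=\beta_1$ without isolating it.
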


\begin{proof}  First, we claim that $n=gd$, where $g={\rm ht}\,
I(\mathcal{C})$. First 
we show that $n\geq gd$. Notice that $\mathbf{1}A\geq d\mathbf{1}$, i.e.,
$\mathbf{1}/d\in Q(A)$. Let $u_1,\ldots,u_s$ be the characteristic
vectors of the minimal vertex covers of $\mathcal C$. As $Q(A)$ is
integral, the vertices of $Q(A)$ are the $u_i$'s. Hence we have the
equality 
$$
Q(A)=\mathbb{R}_+^n+{\rm
conv}(u_1,\ldots,u_s).
$$ 
Since $\mathbf{1}/d\in Q(A)$, using this equality we get 
$$
\mathbf{1}/d=\delta+\lambda_1u_1+\cdots+\lambda_su_s;\ \ \ 
(\delta\in\mathbb{R}_+^n;\, \lambda_i\geq 0;\, 
\textstyle\sum_i\lambda_i=1).
$$
Therefore $n\geq gd$. By hypothesis there are mutually disjoint edges
$f_1,\ldots,f_r$ such that $X$ is equal to $f_1\cup\cdots\cup f_r$.
Consequently  $n=rd$. So $n=rd\geq gd$, i.e., $r\geq g$. On the other hand 
$g={\rm ht}\, I(\mathcal{C})\geq \beta_1(\mathcal{C})\geq r$. Thus $r=g$
and $n=gd$ as claimed. In particular $\mathcal{C}$ has the K\"onig
property. 
We now prove that $\mathcal{C}$ is vertex critical.
By Proposition~\ref{nov20-03} there are $X_1,\ldots,X_d$ mutually
disjoint minimal vertex covers of $\mathcal C$ such 
that $X=\cup_{i=1}^d X_i$. Hence 
$$
n=gd=|X_1|+\cdots+|X_d|.
$$
As $|X_i|\geq g$ for all $i$, we get $|X_i|=g$ for all $i$. It follows
rapidly that $\mathcal{C}$ is vertex critical. Indeed notice that each
vertex $x_i$ belongs to a minimal vertex cover $C_i$ of $\mathcal{C}$ with
$g$ vertices. The set $C_i\setminus\{x_i\}$ is a vertex cover of
$\mathcal{C}\setminus\{x_i\}$ of size $g-1$. Hence
$\alpha_0(\mathcal{C}\setminus\{x_i\})<\alpha_0(\mathcal{C})$.
\end{proof}

\begin{remark}\label{double-contraej-re}\rm 
Consider the clutter $\mathcal{C}$ of Example~\ref{double-contraej}. 
This clutter has a perfect matching and $Q(A)$ is integral, but it is not
vertex critical because $\alpha_0(\mathcal{C}\setminus\{x_9\})=
\alpha_0(\mathcal{C})=4$. Thus the uniformity condition is essential
in Proposition~\ref{integral+uniforme+pm}.
\end{remark}

From the proof of Proposition~\ref{integral+uniforme+pm} we get: 
\begin{proposition}\label{integral+uniforme+pm-cor} 
Let $\mathcal{C}$ be a $d$-uniform clutter with a perfect 
matching $f_1,\ldots,f_r$. 
If $Q(A)$ is integral, then $r=\alpha_0(\mathcal{C})$ and there are
$X_1,\ldots,X_d$ mutually
disjoint minimal vertex covers of $\mathcal C$ of 
size $\alpha_0(\mathcal{C})$ such that $X=\cup_{i=1}^d X_i$. 
\end{proposition}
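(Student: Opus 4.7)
The plan is to extract the required conclusions directly from the argument already carried out for Proposition~\ref{integral+uniforme+pm}, since most of the work has been done there. First I would set $g=\mathrm{ht}\,I(\mathcal{C})=\alpha_0(\mathcal{C})$ and recall the two numerical inequalities established in that proof: the inequality $n\geq gd$, obtained by using that $\mathbf{1}/d\in Q(A)$ together with the vertex description $Q(A)=\mathbb{R}_+^n+\mathrm{conv}(u_1,\ldots,u_s)$ of the integral polyhedron $Q(A)$; and the inequality $g\geq \beta_1(\mathcal{C})\geq r$, valid because $f_1,\ldots,f_r$ are independent edges.

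Next I would exploit the perfect matching hypothesis: since the edges $f_1,\ldots,f_r$ are pairwise disjoint, cover $X$, and each has exactly $d$ vertices, one has $n=rd$. Combining this with $n\geq gd$ gives $r\geq g$, and together with $g\geq r$ this forces $r=g=\alpha_0(\mathcal{C})$, which is the first assertion.

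For the second assertion I would invoke Proposition~\ref{nov20-03} to obtain pairwise disjoint minimal vertex covers $X_1,\ldots,X_d$ of $\mathcal{C}$ with $X=\bigcup_{i=1}^d X_i$. Each $X_i$ is a vertex cover, so $|X_i|\geq g$ for every $i$. Summing these inequalities and using that the $X_i$ form a partition of $X$ one obtains
\[
gd=n=|X_1|+\cdots+|X_d|\geq gd,
\]
so equality must hold throughout, giving $|X_i|=g=\alpha_0(\mathcal{C})$ for every $i$, as claimed.

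There is no real obstacle here: the statement is essentially a bookkeeping consequence of what was already shown in the proof of Proposition~\ref{integral+uniforme+pm}, so the only point requiring care is to keep track of which estimates come from the polyhedral description of $Q(A)$ and which come from the combinatorics of the perfect matching, so that the chain of equalities $r=g=\alpha_0(\mathcal{C})$ and $|X_i|=g$ can be read off cleanly.
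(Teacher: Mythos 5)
Your proposal is correct and follows exactly the route the paper intends: the paper introduces Proposition~\ref{integral+uniforme+pm-cor} with the words ``From the proof of Proposition~\ref{integral+uniforme+pm} we get,'' and your argument simply reads off the chain $n\geq gd$, $n=rd$, $g\geq\beta_1(\mathcal{C})\geq r$, hence $r=g=\alpha_0(\mathcal{C})$, and then the partition from Proposition~\ref{nov20-03} with $|X_i|\geq g$ forcing $|X_i|=g$, precisely as in that proof. There is nothing to add.
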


\section{Algebras and TDI systems of uniform
clutters}\label{algebras-tdi}

As before let $R=K[x_1,\ldots,x_n]$ be a polynomial ring 
over a field $K$ and let $\mathcal{C}$ be a clutter with vertex set
$X=\{x_1,\ldots,x_n\}$, edge set $E(\mathcal{C})$, edge ideal
$I=I(\mathcal{C})$, and incidence matrix
$A$. The column vectors of $A$ are denoted by $v_1,\ldots,v_q$. Thus 
the edge ideal of $\mathcal{C}$ is the ideal of $R$ 
generated by the set $F=\{x^{v_1},\ldots,x^{v_q}\}$. 

First we examine the interaction between 
combinatorial optimization properties of clutters and algebraic
properties of monomial algebras. The {\it monomial algebras\/}
considered here are:  
(a) the {\it Rees algebra\/}
$$
R[It]:=R\oplus It\oplus\cdots\oplus I^{i}t^i\oplus\cdots
\subset R[t],
$$
where $t$ is a new variable, (b) the 
{\it homogeneous monomial subring\/} 
$$
K[Ft]=K[x^{v_1}t,\ldots,x^{v_q}t]\subset R[t]
$$
spanned by $Ft=\{x^{v_1}t,\ldots,x^{v_q}t\}$, (c) the 
{\it edge subring\/} 
$$
K[F]=K[x^{v_1},\ldots,x^{v_q}]\subset R
$$
spanned by $F$, and (d) the {\it Ehrhart ring\/} 
$$
A(P)=K[\{x^at^i\vert\, a\in \mathbb{Z}^n \cap iP; i\in
\mathbb{N}\}]\subset R[t] 
$$
of the lattice polytope $P={\rm conv}(v_1,\ldots,v_q)$. 

The Rees algebra of the edge ideal $I$ can be written as
\begin{eqnarray*}
R[It]&=&K[\{x^at^b\vert\, (a,b)\in\mathbb{N}{\mathcal A}'\}] 
\end{eqnarray*}
where ${\mathcal A}'=\{(v_1,1),\ldots,(v_q,1),e_1,\ldots,e_n\}$,
$e_i$ 
is the $i${\it th} unit vector in $\mathbb{R}^{n+1}$, and 
$\mathbb{N}{\mathcal A}'$ is the subsemigroup of 
$\mathbb{N}^{n+1}$ spanned by ${\mathcal A}'$. According to
\cite[Theorem 7.2.28]{monalg} the 
integral closure of $R[It]$ 
in its field of fractions can be expressed as
\begin{eqnarray*}
\overline{R[It]}&=&K[\{x^at^b\vert\, (a,b)\in
\mathbb{Z}{\mathcal
A}'\cap \mathbb{R}_+{\mathcal A}'\}]
\end{eqnarray*}
where $\mathbb{R}_+{\mathcal A}'$ is the cone 
spanned by ${\mathcal A}'$ and $\mathbb{Z}{\mathcal A}'$ is the subgroup
spanned by ${\mathcal A}'$. The cone $\mathbb{R}_+{\mathcal A}'$ is
called the 
{\it Rees cone\/} of $I$. The Rees algebra of $I$ is called 
{\it normal\/} if $R[It]=\overline{R[It]}$. Notice that
$\mathbb{Z}{\mathcal A}'=\mathbb{Z}^{n+1}$. Hence we obtain the
following well known fact:

\begin{lemma}\label{dec20-07} $R[It]$ is normal if and only if
$\mathbb{N}{\mathcal A}'= 
\mathbb{Z}^{n+1}\cap\mathbb{R}_+{\mathcal A}'$. 
\end{lemma}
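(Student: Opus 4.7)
The plan is to observe that both $R[It]$ and $\overline{R[It]}$ are monomial (semigroup) subalgebras of the Laurent polynomial ring $K[x_1^{\pm 1},\ldots,x_n^{\pm 1},t^{\pm 1}]$. Using the two displayed formulas immediately preceding the lemma, I would write
\[
R[It]=\bigoplus_{(a,b)\in\mathbb{N}\mathcal{A}'}Kx^at^b,\qquad
\overline{R[It]}=\bigoplus_{(a,b)\in\mathbb{Z}\mathcal{A}'\cap\mathbb{R}_+\mathcal{A}'}Kx^at^b,
\]
where the direct sum decompositions hold because distinct monomials are $K$-linearly independent in the Laurent polynomial ring.

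Next I would invoke the remark, stated just before the lemma, that $\mathbb{Z}\mathcal{A}'=\mathbb{Z}^{n+1}$. This is immediate from the definition of $\mathcal{A}'$: the unit vectors $e_1,\ldots,e_n$ all belong to $\mathcal{A}'$, and together with any one of the vectors $(v_k,1)\in\mathcal{A}'$ they generate $\mathbb{Z}^{n+1}$ as a group. Substituting this identification into the integral closure formula gives $\overline{R[It]}=\bigoplus_{(a,b)\in\mathbb{Z}^{n+1}\cap\mathbb{R}_+\mathcal{A}'}Kx^at^b$.

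Finally, by definition $R[It]$ is normal iff $R[It]=\overline{R[It]}$. Since both algebras are graded by their monomial content and the monomials $\{x^at^b\}_{(a,b)\in\mathbb{Z}^{n+1}}$ form a $K$-basis of the ambient Laurent ring, equality of the two subalgebras is equivalent to equality of their index semigroups, namely $\mathbb{N}\mathcal{A}'=\mathbb{Z}^{n+1}\cap\mathbb{R}_+\mathcal{A}'$. This is precisely the claim. There is no serious obstacle here; the only point worth writing out carefully is the linear-independence step ensuring that the semigroup of exponents can be recovered from the algebra, and the short verification that $\mathbb{Z}\mathcal{A}'=\mathbb{Z}^{n+1}$.
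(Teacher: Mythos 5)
Your proposal is correct and matches the paper's intended reasoning: the paper states the lemma as an immediate consequence of the two semigroup-ring descriptions of $R[It]$ and $\overline{R[It]}$ together with $\mathbb{Z}\mathcal{A}'=\mathbb{Z}^{n+1}$, which is exactly the chain of identifications you spell out. The only detail you add beyond what the paper records is the (correct) remark that linear independence of distinct monomials lets one recover the exponent semigroup from the algebra, which is the standard justification left implicit in the paper.
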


The Rees cone of $I$ has dimension $n+1$ because $\mathbb{Z}{\mathcal
A}'=\mathbb{Z}^{n+1}$. According to \cite[Theorem~4.1.1]{webster}
there is a unique 
irreducible representation  
$$
{\mathbb R}_+{\mathcal A}'=H_{e_1}^+\cap H_{e_2}^+\cap\cdots\cap
H_{e_{n+1}}^+\cap H_{\ell_1}^+\cap H_{\ell_2}^+\cap\cdots\cap
H_{\ell_r}^+\nonumber
$$
such that each $\ell_k$ is in $\mathbb{Z}^{n+1}$, the non-zero
entries of 
each $\ell_k$ are relatively prime, and none of the closed 
halfspaces $H_{e_1}^+,\ldots,
H_{e_{n+1}}^+,H_{\ell_1}^+,\ldots,H_{\ell_r}^+$ can be
omitted from 
the intersection. Here $H_{a}^+$ denotes 
the closed halfspace
$$
H_a^+=\{x\in\mathbb{R}^{n+1}\vert\, \langle
x,a\rangle\geq 0\},
$$
$H_a$ stands for the hyperplane through the origin with normal
vector $a$, and $\langle\ ,\, \rangle$ denotes the standard 
inner product. Irreducible representations of Rees cones were
first introduced and studied in \cite{normali}. There are some 
interesting links between these representations, 
edge ideals \cite{reesclu}, perfect graphs \cite{perfect}, and 
bases monomial ideals of matroids or polymatroids \cite{matrof}.

\medskip

The Rees cone of $I$ and the set covering polyhedron of $\mathcal{C}$
are closely related:  

\begin{theorem}{\rm \cite[Corollary~3.13]{clutters}}\label{intiffint}
Let $C_1,\ldots,C_s$ be the minimal vertex 
covers of a clutter $\mathcal C$ and let $u_k=\sum_{x_i\in C_k}e_i$ for 
$1\leq k\leq s$. Then $Q(A)$ is
 integral if and only if the irreducible representation of the Rees
 cone is{\rm :}
\begin{equation}\label{okayama-car1} 
{\mathbb R}_+{\mathcal A}'=H_{e_1}^+\cap H_{e_2}^+\cap\cdots\cap
H_{e_{n+1}}^+\cap H_{\ell_1}^+\cap H_{\ell_2}^+\cap\cdots\cap
H_{\ell_s}^+,
\end{equation}
where $\ell_k=(u_k,-1)$ for
$1\leq k\leq s$.
\end{theorem}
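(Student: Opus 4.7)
The plan is to exploit the polyhedral duality between the Rees cone and the set covering polyhedron. The pivotal preliminary observation is that $\mathbb{R}_+\mathcal{A}'$ is essentially a cone over the polyhedron $P := \mathbb{R}_+^n + \mathrm{conv}(v_1,\ldots,v_q)$: for $t>0$ one has $(x,t) \in \mathbb{R}_+\mathcal{A}'$ iff $x/t \in P$, while the slice $t=0$ equals $\mathbb{R}_+^n$. This reduces the cone picture to the polytope picture. Next, observe that the inclusion
\[
\mathbb{R}_+\mathcal{A}' \subseteq H_{e_1}^+\cap\cdots\cap H_{e_{n+1}}^+\cap H_{\ell_1}^+\cap\cdots\cap H_{\ell_s}^+
\]
with $\ell_k=(u_k,-1)$ holds \emph{without any integrality hypothesis}: indeed $\langle(u_k,-1),(v_i,1)\rangle = \langle u_k,v_i\rangle-1\ge 0$ because $C_k$ is a vertex cover, and $\langle(u_k,-1),e_j\rangle=(u_k)_j\ge 0$.

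For the forward direction, assume $Q(A)$ is integral. Then \cite[Theorem 1.17]{cornu-book} (the blocker theorem, used already in the proof of Lemma~\ref{nov19-03}) gives $Q(B)=P$, where $B$ has columns $u_1,\ldots,u_s$. To obtain the reverse inclusion, take $(x,t)$ in the halfspace intersection. If $t=0$, then $x\in \mathbb{R}_+^n\subseteq\mathbb{R}_+\mathcal{A}'$; if $t>0$, then $x/t\ge 0$ and $\langle x/t,u_k\rangle\ge 1$ for every $k$, so $x/t\in Q(B)=P$, whence $(x,t)\in\mathbb{R}_+\mathcal{A}'$ by the cone-over observation. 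Irreducibility of the resulting representation then needs to be checked: no $H_{\ell_k}^+$ is redundant because $u_k$ being a vertex of $Q(A)$ forces $n$ linearly independent constraints to be tight at $u_k$, so the corresponding vectors in $\mathcal{A}'$ span an $n$-dimensional face of $\mathbb{R}_+\mathcal{A}'$ lying in $H_{\ell_k}$; and no $H_{e_i}^+$ is redundant because $\mathcal{C}$ has no isolated vertex. By uniqueness of the irreducible $\mathcal H$-representation \cite[Theorem~4.1.1]{webster}, we obtain Eq.~(\ref{okayama-car1}).

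For the reverse direction, assume the irreducible representation has the displayed form. Slicing the identity of cones at $t=1$ yields
\[
P \;=\; \{\,x\in\mathbb{R}^n : x\ge 0,\ \langle x,u_k\rangle \ge 1 \text{ for all } k\,\} \;=\; Q(B).
\]
Since $P$ is trivially an integral polyhedron (its vertices lie among the integer vectors $v_1,\ldots,v_q$), $Q(B)$ is integral, and applying \cite[Theorem~1.17]{cornu-book} once more produces the integrality of $Q(A)$. The main obstacle I anticipate is the irreducibility verification in the forward direction: spelling out exactly which elements of $\mathcal{A}'$ lie on $H_{\ell_k}$ and confirming they span an $n$-dimensional subspace amounts to transcribing, into the language of the Rees cone, the statement that $u_k$ is a vertex of $Q(A)$; once this bookkeeping is done, the rest of the argument is a clean duality.
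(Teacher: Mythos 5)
Since the paper cites this result from \cite{clutters} without reproving it, there is no reference argument here to compare against; the review therefore concerns only the soundness of your argument. Your overall strategy---viewing $\mathbb{R}_+\mathcal{A}'$ as the homogenization of $P := \mathbb{R}_+^n + \mathrm{conv}(v_1,\ldots,v_q)$ (so that the slice at $t=1$ is $P$ and the slice at $t=0$ is $\mathbb{R}_+^n$), and invoking blocker duality to pass between integrality of $Q(A)$ and the description $P=Q(B)$---is the right one. The unconditional inclusion of the Rees cone in the half-space intersection, the forward direction via slicing by $t$, and the reverse direction via integrality of $P$ (all its vertices lie among $v_1,\ldots,v_q$ because $\mathcal{C}$ is a clutter) are all correct, and your irredundancy argument for the $H_{\ell_k}^+$ via $n$ linearly independent tight constraints at the vertex $u_k$ of $Q(A)$ is on the right track.

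The genuine gap is the irredundancy claim for the coordinate half-spaces $H_{e_i}^+$, $1\le i\le n$. You write that they are irredundant ``because $\mathcal{C}$ has no isolated vertex,'' but that is not the relevant condition. The face $H_{e_i}\cap\mathbb{R}_+\mathcal{A}'$ is the cone spanned by the elements of $\mathcal{A}'$ with vanishing $i$-th coordinate, namely $\{e_j : j\neq i,\ j\le n\}$ together with $\{(v_k,1) : x_i\notin f_k\}$. For this to have dimension $n$, and hence for $H_{e_i}^+$ to be a facet-defining (irredundant) inequality, one needs at least one edge that \emph{omits} $x_i$; ``$x_i$ lies in at least one edge'' does not deliver this. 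Indeed, if some vertex lies in every edge---for instance $\mathcal{C}$ with the single edge $\{x_1,x_2\}$, or the path $\{x_1,x_2\},\{x_2,x_3\}$ with $x_2$ universal---then $Q(A)$ is integral yet the corresponding $H_{e_i}^+$ is implied by $H_{e_{n+1}}^+$ and the $H_{\ell_k}^+$, so (\ref{okayama-car1}) is not literally irreducible. So either the cited theorem carries a hidden nondegeneracy hypothesis (each vertex is absent from some edge), or the mathematically relevant content is the cone equality rather than strict irredundancy of every listed half-space; in either case the reason you give does not close this step, and you should either add and use the correct hypothesis or restrict the claim to the cone equality, which your argument does establish.
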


\begin{definition}\label{mfmc-def}\rm The clutter $\mathcal C$
satisfies the {\it max-flow min-cut\/} 
(MFMC) 
property if both sides 
of the LP-duality equation
\begin{equation}\label{jun6-2-03-1}
{\rm min}\{\langle \alpha,x\rangle \vert\, x\geq 0; xA\geq{\mathbf 1}\}=
{\rm max}\{\langle y,{\mathbf 1}\rangle \vert\, y\geq 0; Ay\leq\alpha\} 
\end{equation}
have integral optimum solutions $x$ and $y$ for each non-negative 
integral vector $\alpha$. 
The system $xA\geq\mathbf{\mathbf 1}$; $x\geq 0$ is called 
{\it totally dual integral\/} (TDI) if the maximum has an integral
optimum solution $y$ for each integral vector $\alpha$ with 
finite maximum. 
\end{definition}

A breakthrough in the area of monomial algebras is the
translation of combinatorial problems (e.g., the Conforti-Cornu\'ejols 
conjecture \cite{cornu-book}, the max-flow min-cut property, or the
idealness of a clutter) into algebraic problems of monomial 
algebras \cite{reesclu,clutters}. A typical example is the 
following result that describes the max-flow min-cut property in
algebraic and optimization terms.  

\begin{theorem}{\rm\cite{normali,reesclu,clutters,HuSV,Schr2}}\label{noclu1}
The following statements are equivalent\/{\rm :}
\begin{enumerate}
\item[(i)] The associated graded ring ${\rm gr}_I(R)=R[It]/IR[It]$ is reduced.
\item[(ii)] $R[It]$ is normal and $Q(A)$ is an integral
polyhedron.
\item[(iii)]  $I^{i}=I^{(i)}$ for $i\geq 1$, where $I^{(i)}$ is
the $i${\it th} symbolic power of $I$.
\item[(iv)]$\mathcal C$ has the max-flow min-cut
property.
\item[(v)] $x \geq 0;\, xA \geq\mathbf{1}$ is a {\rm TDI system}.
\end{enumerate}
\end{theorem}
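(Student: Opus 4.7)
The plan is to prove the five conditions equivalent via a cyclic chain of implications, drawing on the dictionary between combinatorial optimization, primary decomposition of square-free monomial ideals, and Rees cones. I would begin with the cheapest link, (iv) $\Leftrightarrow$ (v). The implication (iv) $\Rightarrow$ (v) is immediate from the definitions restricted to integer right-hand sides. For (v) $\Rightarrow$ (iv), by the Edmonds--Giles theorem a TDI system with integer right-hand side automatically has an integer-valued optimal primal face, so $Q(A)$ is integral; this together with integrality of the dual optimum (which is the TDI hypothesis) gives MFMC.

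Next, I would establish (iii) $\Leftrightarrow$ (iv) by explicitly describing the symbolic powers. Since the minimal primes of $I(\mathcal{C})$ are the ideals $(C_1),\ldots,(C_s)$ generated by the minimal vertex covers (as recalled in Section~\ref{section2}), we have $I^{(i)}=\bigcap_{k=1}^{s}(C_k)^i$, and a monomial $x^a$ lies in $I^{(i)}$ if and only if $\langle a,u_k\rangle\geq i$ for each $k$, where $u_k$ is the characteristic vector of $C_k$. On the other hand, $x^a\in I^i$ means $a=\sum\lambda_j v_j+c$ with $c\geq 0$ integer, $\lambda_j\in\mathbb{N}$, and $\sum\lambda_j\geq i$. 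Hence $I^{(i)}\subseteq I^i$ for all $i$ is precisely the integer rounding property: every $a$ dominating $i$ times a vertex cover decomposes as $i$ or more edges plus a non-negative remainder. By LP-duality this integer rounding holds for all $i\geq 1$ and all $a$ if and only if both sides of (\ref{jun6-2-03-1}) admit integer optima for every non-negative integer $\alpha$, that is, MFMC.

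For (ii) $\Leftrightarrow$ (iv), I would combine Theorem~\ref{intiffint} with Lemma~\ref{dec20-07}. Assuming MFMC, integrality of $Q(A)$ follows from the direction already proved, so the Rees cone has the irreducible form (\ref{okayama-car1}); the integer points $(a,b)$ in this cone are exactly those with $a\geq 0$ and $\langle a,u_k\rangle\geq b$ for all $k$, which by the description of $I^{(b)}$ above are exponents of monomials in $I^{(b)}$. Normality of $R[It]$, by Lemma~\ref{dec20-07}, identifies these lattice points with $\mathbb{N}\mathcal{A}'$, i.e., with exponents of monomials in $I^b$; hence $I^b=I^{(b)}$ for all $b$, so (iii) follows and in particular (ii) holds. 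Conversely, (ii) combined with Theorem~\ref{intiffint} reverses the same chain to recover the integer rounding, hence MFMC. Finally, (i) $\Leftrightarrow$ (iii) is a standard equivalence for square-free monomial ideals: ${\rm gr}_I(R)$ is reduced iff $I$ is normally torsion-free, iff $\mathrm{Ass}(R/I^i)$ equals the set of minimal primes for every $i$, and since these primes are precisely the $(C_k)$, the latter is equivalent to $I^i=\bigcap_k(C_k)^i=I^{(i)}$ for all $i\geq 1$.

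The principal obstacle is the careful two-way translation between ideal-theoretic data (symbolic powers, integral closures of Rees algebras) and the integer-programming data of $Q(A)$ and of the Rees cone. Once one sets up the identification of monomials in $I^{(i)}$ with integer points of the dilation $i\cdot Q(A)$, and of monomials in $\overline{I^b}$ with integer points of the Rees cone as described by (\ref{okayama-car1}), each equivalence becomes a straightforward change of vocabulary, with LP-duality and Edmonds--Giles providing the single nontrivial bridge between minimum and maximum sides of (\ref{jun6-2-03-1}).
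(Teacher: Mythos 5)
The paper does not actually prove Theorem~\ref{noclu1}; it is quoted with the attribution \cite{normali,reesclu,clutters,HuSV,Schr2} and treated as an imported background result, so there is no in-paper proof to compare against. I will therefore assess your sketch on its own terms.

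Your overall blueprint is the right one: use the primary decomposition $I^{(i)}=\bigcap_k(C_k)^i$ (so $x^a\in I^{(i)}$ iff $\langle a,u_k\rangle\geq i$ for all $k$), use Theorem~\ref{intiffint} to identify the lattice points of the Rees cone at height $b$ with the exponents of $I^{(b)}$ once $Q(A)$ is integral, use Lemma~\ref{dec20-07} as the normality criterion, use Edmonds--Giles for (v)$\Rightarrow$(iv), and delegate (i)$\Leftrightarrow$(iii) to the Huneke--Simis--Vasconcelos theory of normally torsion-free ideals. The ``integer rounding'' reformulation of (iii)$\Leftrightarrow$(iv) is correct, though it is worth making explicit that one must also extract $Q(A)$ integral from $I^{(i)}\subseteq I^i$ for all $i$ (weak LP-duality via the $u_k$ bounds the max by $\min_k\langle\alpha,u_k\rangle$, and the rounding hypothesis shows this bound is attained by an integral point, forcing the $u_k$ to be the vertices); you gesture at this but do not separate it out.

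The genuine gap is in the (ii)$\Leftrightarrow$(iv) paragraph, which as written is circular. You say: assume MFMC, so $Q(A)$ is integral and the Rees-cone lattice points are exponents of $I^{(b)}$; then ``normality of $R[It]$, by Lemma~\ref{dec20-07}, identifies these lattice points with $\mathbb{N}\mathcal{A}'$; hence $I^b=I^{(b)}$ for all $b$, so (iii) follows and in particular (ii) holds.'' Here normality of $R[It]$ is taken as a \emph{hypothesis} to deduce (iii), but normality is exactly the part of (ii) you are supposed to be proving; you then claim (ii) ``in particular'' holds, which is what you assumed. What is missing is a clean argument for (iv)$\Rightarrow$(ii) (or equivalently (iii)$\Rightarrow$(ii)). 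Two fixes: (1) use (iii) and the integrality of $Q(A)$ to verify directly that every lattice point of $\mathbb{R}_+\mathcal{A}'$ lies in $\mathbb{N}\mathcal{A}'$ --- at height $b$ the lattice points are exactly the exponents of $I^{(b)}=I^b$ --- and then invoke Lemma~\ref{dec20-07} to \emph{conclude} normality; or (2) observe that $I^{(i)}=\bigcap_k(C_k)^i$ is integrally closed for every $i$ (powers of monomial primes and their intersections are integrally closed), so $I^i=I^{(i)}$ forces $I^i=\overline{I^i}$, i.e.\ $R[It]$ is normal. Your ``converse'' sentence, which derives MFMC from (ii), is fine; it is the forward direction that needs repair.
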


For an integral matrix $B\neq(0)$, the 
greatest common divisor of all the nonzero $r\times r$ sub-determinants 
of $B$ will be denoted by $\Delta_r(B)$.

\begin{proposition}\label{two-char} Let $\mathcal{C}$ be a 
clutter and let $B$ be the matrix with column vectors 
$(v_1,1),\ldots,(v_q,1)$. The following statements hold{\rm :}  
\begin{enumerate}
\item[(i) ] {\rm \cite[Proposition~4.4]{reesclu}} If
$\mathcal{C}$ is uniform, then $\mathcal{C}$
has the max-flow min-cut property if and only if $Q(A)$ 
is integral and $K[Ft]=A(P)$.
\item[(ii)] {\rm \cite[Theorem~3.9]{ehrhart}} 
$\Delta_r(B)=1$ if and only if $\overline{K[Ft]}=A(P)$, where $r$ is
the rank of $B$.  
\end{enumerate}
\end{proposition}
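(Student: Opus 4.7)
The plan is to handle the two parts in turn, reducing each ring-theoretic equality to a statement about lattice points in the Rees cone $\mathbb{R}_+B$. A common observation to both is the identification
\[
A(P)=K[\mathbb{Z}^{n+1}\cap\mathbb{R}_+B],
\]
which holds because an integral point $(a,i)$ with $a\in iP\cap\mathbb{Z}^n$ is exactly a lattice point of the cone $\mathbb{R}_+B$, and vice versa.

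For part (i), I would write $K[Ft]=K[\mathbb{N}B]$ and use Theorem~\ref{noclu1} to translate the max-flow min-cut property into normality of $R[It]$ together with $Q(A)$ being integral. Normality then reads, via Lemma~\ref{dec20-07}, as $\mathbb{N}\mathcal{A}'=\mathbb{Z}^{n+1}\cap\mathbb{R}_+\mathcal{A}'$. The $d$-uniformity hypothesis is then used to eliminate the unit-vector generators $e_1,\ldots,e_n$ of $\mathcal{A}'$: an integral point of $\mathbb{R}_+\mathcal{A}'$ in $t$-degree $i$ has first-block coordinate sum equal to $id$, and therefore cannot carry any unit-vector contribution, so it already lies in $\mathbb{N}B$. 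This yields $K[Ft]=A(P)$. For the converse, combining $Q(A)$ integral (so that the Rees cone admits the explicit description of Theorem~\ref{intiffint}) with $K[Ft]=A(P)$ recovers normality of $R[It]$, and another appeal to Theorem~\ref{noclu1} delivers MFMC.

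For part (ii), I would invoke the standard description $\overline{K[Ft]}=K[\mathbb{Z}B\cap\mathbb{R}_+B]$ of the normalization of an affine semigroup ring. Combined with the identification of $A(P)$ above, the desired equality $\overline{K[Ft]}=A(P)$ becomes $\mathbb{Z}B\cap\mathbb{R}_+B=\mathbb{Z}^{n+1}\cap\mathbb{R}_+B$. Because $\mathbb{R}_+B$ spans $\mathbb{R}B$ as a real vector space, any element of $\mathbb{Z}^{n+1}\cap\mathbb{R}B$ can be translated by a large multiple of $\sum_j(v_j,1)$ into the cone, which reduces the equality of intersections to the saturation statement $\mathbb{Z}B=\mathbb{Z}^{n+1}\cap\mathbb{R}B$. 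By Smith normal form this index equals $\Delta_r(B)$, giving the equivalence.

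The main obstacle I foresee lies in part (i), specifically in the passage from the semigroup equality governing normality of $R[It]$ (which involves the augmented set $\mathcal{A}'$) to the equality $K[Ft]=A(P)$ (which only involves $B$). Without uniformity, integral points in the Rees cone may genuinely require unit-vector summands, and one has $K[Ft]\subsetneq A(P)$ in general. Pinning down exactly how $d$-uniformity ensures the $t$-grading detects the total $x$-degree $id$, and using this to strip off the unit-vector contributions, is the technical crux of the argument.
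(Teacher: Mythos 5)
The paper does not actually prove this proposition: part (i) is quoted from \cite[Proposition~4.4]{reesclu} and part (ii) from \cite[Theorem~3.9]{ehrhart}, so there is no in-paper argument to compare yours against; I will assess your sketch on its own terms. Part (ii) is essentially right: $\overline{K[Ft]}=K[\mathbb{Z}B\cap\mathbb{R}_+B]$, $A(P)=K[\mathbb{Z}^{n+1}\cap\mathbb{R}_+B]$, a translation by a large multiple of the interior point $\sum_j(v_j,1)$ reduces the equality of cone intersections to saturation of $\mathbb{Z}B$ in $\mathbb{Z}^{n+1}\cap\mathbb{R}B$, and Smith normal form identifies the index with $\Delta_r(B)$. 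The forward implication of part (i) is also right, modulo a slip in phrasing: the integral points that must be shown to lie in $\mathbb{N}B$ are those of $\mathbb{R}_+B$, not of the larger Rees cone $\mathbb{R}_+\mathcal{A}'$; it is exactly for points of $\mathbb{R}_+B$ in $t$-degree $i$ that the first block sums to $id$, which then kills the $\rho_k e_k$ contributions in the decomposition supplied by normality of $R[It]$ (Lemma~\ref{dec20-07}).

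The genuine gap is in the converse of part (i), which you dispatch in one clause (``combining $Q(A)$ integral $\ldots$ with $K[Ft]=A(P)$ recovers normality of $R[It]$'') and then, in your closing paragraph, misidentify the difficulty as lying in the forward direction. To recover normality one must show that every $(a,b)\in\mathbb{Z}^{n+1}\cap\mathbb{R}_+\mathcal{A}'$ lies in $\mathbb{N}\mathcal{A}'$; such a point typically has $|a|>bd$, hence does \emph{not} lie in $\mathbb{R}_+B$, so $K[Ft]=A(P)$ is not directly applicable. What is needed is a descent step. Write $(a,b)=(\mu,0)+\sum_i\lambda_i(v_i,1)$ with $\mu,\lambda\geq 0$, so that $|a|-bd=|\mu|$. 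If $|\mu|>0$ pick $j$ with $\mu_j>0$; then $a_j\geq 1$, and for every minimal vertex cover $u_k$ one has $\langle a,u_k\rangle\geq b+\mu_j(u_k)_j$, which being an integer is $\geq b+1$ whenever $(u_k)_j=1$. By Theorem~\ref{intiffint} this puts $(a-e_j,b)$ back in the Rees cone. Iterating brings the point down to first-block sum $bd$, where it lies in $\mathbb{Z}^{n+1}\cap\mathbb{R}_+B$ and $K[Ft]=A(P)$ supplies a decomposition in $\mathbb{N}B$; adding back the stripped $e_j$'s gives membership in $\mathbb{N}\mathcal{A}'$, whence $R[It]$ is normal and Theorem~\ref{noclu1} yields MFMC. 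This descent is the technical crux of the converse, and your proposal omits it.
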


We come to the main result of this section.

\begin{theorem}\label{mfmch->diag} If $\mathcal C$ is a uniform clutter
with the max-flow min-cut property, then 
\begin{enumerate}
\item[(a)] $\Delta_r(A)=1$, where $r={\rm rank}(A)$.
\item[(b)] $\mathbb{N}{\mathcal A}=\mathbb{R}_+{\mathcal
A}\cap\mathbb{Z}^n$, where ${\mathcal A}=\{v_1,\ldots,v_q\}$. 
\end{enumerate}
\end{theorem}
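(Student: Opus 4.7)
\medskip

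\noindent\textbf{Proof plan.}
The plan is to derive (a) from Proposition~\ref{two-char}(ii) together with the integrality of $Q(A)$ and Lemma~\ref{nov19-03}, and to derive (b) from LP-duality combined with the TDI characterization of max-flow min-cut in Theorem~\ref{noclu1}(v).

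For (a), Theorem~\ref{noclu1} gives that $Q(A)$ is integral, and Proposition~\ref{two-char}(i) then gives $K[Ft] = A(P)$. Since the Ehrhart ring $A(P)$ is always a normal domain, this forces $\overline{K[Ft]} = A(P)$, and so by Proposition~\ref{two-char}(ii) one has $\Delta_{r_0}(B) = 1$, where $r_0 = \mathrm{rank}(B)$ and $B$ is the matrix with columns $(v_1,1),\dots,(v_q,1)$. To pass from $B$ to $A$, I would apply Lemma~\ref{nov19-03} to obtain a minimal vertex cover $X_1$ with $|X_1 \cap \mathrm{supp}(x^{v_j})|=1$ for every $j$; its indicator $u_1 = \sum_{x_i \in X_1} e_i$ satisfies $u_1^T A = \mathbf{1}$. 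Thus the last row of $B$ is an \emph{integer} linear combination of rows of $A$, and subtracting $u_1^T A$ from that row is a unimodular row operation transforming $B$ into the matrix obtained from $A$ by appending a zero row. Since unimodular row operations preserve both rank and invariant factors, $r_0 = \mathrm{rank}(A) = r$ and $\Delta_r(A) = \Delta_{r_0}(B) = 1$, proving (a).

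For (b), let $\alpha \in \mathbb{R}_+\mathcal{A} \cap \mathbb{Z}^n$; writing $\alpha = \sum \lambda_j v_j$ with $\lambda_j \ge 0$, nonnegativity of the $v_j$ forces $\alpha \in \mathbb{Z}_{\ge 0}^n$. By $d$-uniformity every column of $A$ has sum $d$, so any $y \ge 0$ with $Ay \le \alpha$ satisfies $d\,\langle y,\mathbf{1}\rangle = \langle Ay,\mathbf{1}\rangle \le |\alpha|$, with equality attained by $y=(\lambda_j)$; hence the LP $\max\{\langle y,\mathbf{1}\rangle : y \ge 0,\ Ay \le \alpha\}$ has value $|\alpha|/d$. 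By Theorem~\ref{noclu1}(v) the system $xA \ge \mathbf{1},\ x \ge 0$ is TDI, so this dual max admits an integer optimum $y^* \in \mathbb{N}^q$ attaining the common value. In particular $|\alpha|/d \in \mathbb{Z}$, and $\beta := \alpha - Ay^* \ge 0$ satisfies $|\beta| = |\alpha| - d|y^*| = 0$, forcing $\beta = 0$ and $\alpha = \sum y_j^* v_j \in \mathbb{N}\mathcal{A}$. The reverse inclusion is clear.

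The delicate point is the integrality claim in (a): the purely rational relation $\mathbf{1} = (1/d)\,\mathbf{1}^T A$ by itself would give only $\Delta_r(A) \mid d\,\Delta_r(B) = d$, so the sharpening to $\Delta_r(A) = 1$ really requires the \emph{integral} relation $u_1^T A = \mathbf{1}$ provided by Lemma~\ref{nov19-03}, whose proof uses $d$-uniformity in an essential way; this is also where the argument will fail for non-uniform clutters (cf.\ Example~\ref{double-contraej}).
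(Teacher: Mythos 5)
Your argument for part (a) is essentially the paper's: you pass through $K[Ft]=A(P)$, deduce $\Delta_{r_0}(B)=1$ from Proposition~\ref{two-char}(ii), and then use the \emph{integral} row relation $u_1^T A = \mathbf{1}$ supplied by the uniform structure theorem to show $B$ is $\mathbb{Z}$-equivalent to $A$ with a zero row appended. The only cosmetic difference is that you invoke Lemma~\ref{nov19-03} directly (one transversal cover suffices), whereas the paper quotes Proposition~\ref{nov20-03} (the full partition into $d$ covers) and then uses only the first cover; your closing remark that the rational relation $\mathbf{1}=(1/d)\mathbf{1}^T A$ alone would only give $\Delta_r(A)\mid d$ correctly pinpoints why the integral relation is the crux, and the paper makes the same point via its triangle example.

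For part (b) you take a genuinely different route. The paper's proof lifts $a$ to the Rees cone $\mathbb{R}_+\mathcal{A}'$, uses Theorem~\ref{intiffint} to get the explicit irreducible representation of that cone, rounds the degree component from $b$ to $\lceil b\rceil$ using integrality of $\langle a,u_i\rangle$, and then invokes normality of $R[It]$ (via Lemma~\ref{dec20-07}) to land in $\mathbb{N}\mathcal{A}'$; uniformity is used at the end to kill the slack variables. Your proof instead works entirely inside LP-duality: uniformity turns the dual maximum $\max\{\langle y,\mathbf{1}\rangle : y\geq 0,\ Ay\leq\alpha\}$ into $|\alpha|/d$ with the witness $y=(\lambda_j)$, TDI (Theorem~\ref{noclu1}(v)) hands you an integral optimal $y^*$, and the counting argument $|\alpha - Ay^*| = |\alpha| - d|y^*| = 0$ forces $\alpha = Ay^*$. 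This avoids the Rees cone and primary decomposition machinery entirely, substituting one algebraic input (TDI of the covering system) for two (the irreducible representation of $\mathbb{R}_+\mathcal{A}'$ and normality of $R[It]$); it is more elementary and more self-contained, at the price of using the optimization formulation rather than the ring-theoretic one the paper emphasizes. Both arguments are correct, and both show $d$-uniformity is doing real work: in yours it is what makes the LP value $|\alpha|/d$ and makes the slack $\beta$ vanish; in the paper's it is what forces $\rho_i=0$ and $b=\lceil b\rceil$.
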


\begin{proof}  (a) Let $\widetilde{A}$ be the matrix 
with column vectors $(v_1,0),\ldots,(v_q,0)$. We need only show 
that $\Delta_r(\widetilde{A})=1$ because $A$ and $\widetilde{A}$ have the same
rank and $\Delta_r(A)=\Delta_r(\widetilde{A})$. Let 
$B$ be the matrix  with column vectors $(v_1,1),\ldots,(v_q,1)$. 
Since the clutter $\mathcal{C}$ is uniform, the last row 
vector of $B$, i.e., the 
vector $\mathbf{1}=(1,\ldots,1)$, is a $\mathbb{Q}$-linear combination of the 
first $n$ rows of $B$. Thus $\widetilde{A}$ and $B$ have the same rank. 
 By Proposition~\ref{two-char}(i) we obtain 
$K[Ft]=A(P)$. In particular, taking integral closures, 
one has $\overline{K[Ft]}=A(P)$ because $A(P)$
is always a normal domain. Hence by Proposition~\ref{two-char}(ii) we
have $\Delta_r(B)=1$. Recall that $\Delta_r(\widetilde{A})=1$ if and only if 
$\widetilde{A}$ is equivalent over $\mathbb{Z}$ to an identity matrix. In
other words  $\Delta_r(\widetilde{A})=1$ if and only if all the
invariant factors of 
$\widetilde{A}$ are equal to $1$. Thus it suffices to prove that $B$
is equivalent  
to $\widetilde{A}$ over $\mathbb{Z}$.  Notice that in general 
$B$ and $\widetilde{A}$ are not equivalent over $\mathbb{Z}$ (for
instance if 
$\mathcal{C}$ is a cycle of length three, then $\widetilde{A}$ and
$B$ have rank 
$3$, $\Delta_3(\widetilde{A})=2$ and $\Delta_3(B)=1$). By
Proposition~\ref{nov20-03}, there are  
$X_1,\ldots,X_d$ mutually disjoint minimal vertex covers of $\mathcal
C$ such  
that $X=\cup_{i=1}^d X_i$ and
\begin{equation}
|{\rm supp}(x^{v_i})\cap X_k|=1\ \ \ \ \forall\ i,k.
\end{equation} 
By permuting the variables we may assume 
that $X_1$ is equal to $\{x_1,\ldots,x_r\}$. Hence the last row of
$B$, which is the vector $\mathbf{1}$, is the sum of the
first $|X_1|$ rows of $B$, i.e., the matrix $B$ is equivalent to
$\widetilde{A}$ over $\mathbb{Z}$.  

(b) It suffices to prove the inclusion $\mathbb{R}_+{\mathcal
A}\cap\mathbb{Z}^n\subset \mathbb{N}{\mathcal A}$. Let $a$ be an integral
vector in $\mathbb{R}_+{\mathcal A}$. Then 
$a=\lambda_1v_1+\cdots+\lambda_qv_q$, $\lambda_i\geq 0$ for all $i$.
Set $b=\sum_i\lambda_i$ and denote the {\it ceiling\/} of $b$ by $\lceil
b\rceil$. Recall that $\lceil b\rceil=b$ if $b\in\mathbb{N}$ and 
$\lceil b\rceil=\lfloor
b\rfloor+1$ if $b\not\in\mathbb{N}$, where $\lfloor b\rfloor$ 
is the integer part of $b$. Then $|a|=bd$. We claim that 
$(a,\lceil b\rceil)$ belongs to $\mathbb{R}_+{\mathcal A}'$, where
$\mathcal{A}'$ is the set 
$\{e_1,\ldots,e_n,(v_1,1),\ldots,(v_q,1)\}$.
Let $C_1,\ldots,C_s$ be the
minimal vertex covers of $\mathcal C$ and let $u_i$ be the incidence
vector of $C_i$ for $1\leq i\leq s$. Since $Q(A)$ is integral, by
Theorem~\ref{intiffint}, we can
write  
\begin{equation}\label{aug28-06} 
{\mathbb R}_+{\mathcal A}'=H_{e_1}^+\cap H_{e_2}^+\cap\cdots\cap
H_{e_{n+1}}^+\cap H_{\ell_1}^+\cap H_{\ell_2}^+\cap\cdots\cap 
H_{\ell_s}^+,
\end{equation}
where $\ell_i=(u_i,-1)$ for $1\leq i\leq s$. Notice that 
$(a,b)\in\mathbb{R}_+{\mathcal A}'$, thus using Eq.~(\ref{aug28-06}) we
get that $\langle a,u_i\rangle\geq b$ for all $i$. Hence 
$\langle a,u_i\rangle\geq \lceil b\rceil$ for all $i$ because
$\langle a,u_i\rangle$ is an integer for all $i$. 
Using Eq.~(\ref{aug28-06}) again we get that $(a,\lceil
b\rceil)\in\mathbb{R}_+{\mathcal A}'$, as claimed.  By
Theorem~\ref{noclu1} the Rees ring $R[It]$ is normal. Consequently
applying Lemma~\ref{dec20-07}, we obtain that 
$(a,\lceil b\rceil)\in\mathbb{N}{\mathcal A}'$. There are 
non-negative integers $\eta_1,\ldots,\eta_q$ and
$\rho_1,\ldots,\rho_n$ such that 
$$
(a,\lceil
b\rceil)=\eta_1(v_1,1)+\cdots+\eta_q(v_q,1)+\rho_1e_1+\cdots+\rho_ne_n.
$$
Hence it is seen that 
$|a|=\lceil b\rceil d+\sum_i\rho_i=bd$. Consequently $\rho_i=0$ for
all $i$ and $b=\lceil b\rceil$. It follows at once that
$a\in\mathbb{N}{\mathcal A}$ as required. \end{proof}

The next example shows that the uniformity hypothesis is essential in
the two statements of Theorem~\ref{mfmch->diag}. 

\begin{example}\label{unif-ess-mfmch}\rm Consider 
the clutter $\mathcal{C}$ whose incidence matrix is 
$$
A=\left[\begin{array}{cccc}
1 &0 &0 &1\\ 
0 &1& 0& 1\\ 
0& 0& 1& 1\\ 
0& 1& 1& 0\\ 
1 &0 &1 &0
\end{array}
\right].
$$
Let $v_1,v_2,v_3,v_4$ be the columns of $A$. This clutter is not
uniform, satisfies max-flow min-cut, $A$ is not equivalent over
$\mathbb{Z}$ to an identity matrix, and $\{v_1,\ldots,v_4\}$ is not a Hilbert
basis for the cone it generates.
\end{example}

\begin{corollary}\label{mfmc->smith=identity} 
If $\mathcal{C}$ is a uniform clutter with the
max-flow min-cut property, then its incidence matrix diagonalizes 
over $\mathbb{Z}$ to an identity matrix.
\end{corollary}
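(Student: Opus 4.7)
The plan is to deduce this corollary directly from Theorem~\ref{mfmch->diag}(a) together with the standard theory of Smith normal form for integer matrices. First I would recall the general fact that any integer matrix $A$ of rank $r$ is equivalent over $\mathbb{Z}$, via row and column operations, to a Smith normal form whose nonzero diagonal entries are the invariant factors $d_1\mid d_2\mid\cdots\mid d_r$, and that these are related to the determinantal divisors by the classical formula
\[
\Delta_k(A) = d_1 d_2\cdots d_k \qquad (1\leq k\leq r).
\]
In particular $\Delta_r(A) = d_1 d_2 \cdots d_r$, and since each $d_i$ is a positive integer dividing the next, the equation $\Delta_r(A)=1$ forces $d_i=1$ for all $i$.

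Next I would apply Theorem~\ref{mfmch->diag}(a): since $\mathcal{C}$ is uniform and has the max-flow min-cut property, we have $\Delta_r(A)=1$ with $r = \mathrm{rank}(A)$. Combined with the observation in the previous paragraph, every invariant factor of $A$ equals $1$, so the Smith normal form of $A$ consists of an $r\times r$ identity block together with (possibly empty) blocks of zeros filling out the remaining rows and columns. This is precisely the assertion that $A$ diagonalizes over $\mathbb{Z}$ to an identity matrix.

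There is really no obstacle here: the content of the corollary is entirely contained in Theorem~\ref{mfmch->diag}(a), and what remains is only the translation from the language of determinantal divisors $\Delta_r(A)$ to the language of Smith normal form. The slight subtlety worth flagging in the write-up is simply the convention of what ``diagonalizes to an identity matrix'' means when $A$ is not square or when $r<\min(m,q)$; in that case the Smith form is an $r\times r$ identity block padded by zero rows and/or zero columns, which is the standard reading used throughout the paper.
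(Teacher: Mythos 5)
Your proof is correct and follows exactly the same route as the paper: apply Theorem~\ref{mfmch->diag}(a) to get $\Delta_r(A)=1$, then invoke the standard relation between determinantal divisors and invariant factors to conclude that the Smith normal form is an identity block (padded by zeros). The only cosmetic difference is that you spell out the formula $\Delta_k(A)=d_1\cdots d_k$, whereas the paper simply cites Jacobson for this fact.
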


\begin{proof}  By Theorem~\ref{mfmch->diag} one has $\Delta_r(A)=1$,
where $r$ is the 
rank of $A$. Thus the invariant factors of $A$ are all equal to $1$
(see \cite[Theorem~3.9]{JacI}), i.e., the Smith normal 
form of $A$ is an identity matrix. \end{proof}  

\begin{corollary}\label{dec14-07} Let $\mathcal{C}$ be a uniform
clutter. Then the following are equivalent{\rm :}
\begin{enumerate}
\item[(i)]  $\mathcal{C}$ has the max-flow min-cut property.
\item[(ii)] $Q(A)$ is an integral polyhedron and 
$\mathbb{N}{\mathcal A}=\mathbb{R}_+{\mathcal
A}\cap\mathbb{Z}^n$, where ${\mathcal A}=\{v_1,\ldots,v_q\}$.
\end{enumerate}
\end{corollary}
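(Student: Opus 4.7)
The plan is to handle the two implications separately, leveraging results already available in this section. For (i) $\Rightarrow$ (ii), the argument is essentially bookkeeping: the equivalence (iv) $\Leftrightarrow$ (ii) in Theorem \ref{noclu1} immediately yields that $Q(A)$ is integral, while Theorem \ref{mfmch->diag}(b) gives precisely $\mathbb{N}\mathcal{A} = \mathbb{R}_+\mathcal{A} \cap \mathbb{Z}^n$. So this direction requires no new work.

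For (ii) $\Rightarrow$ (i), I would route the argument through Proposition \ref{two-char}(i), which for a uniform clutter characterizes MFMC as integrality of $Q(A)$ together with the equality $K[Ft] = A(P)$. Since integrality of $Q(A)$ is assumed, the task reduces to proving $K[Ft] = A(P)$. The containment $K[Ft] \subset A(P)$ is automatic, since a product $\prod_j (x^{v_j}t)^{\alpha_j}$ has the form $x^a t^i$ with $a = \sum_j \alpha_j v_j$ and $i = \sum_j \alpha_j$, so $a/i \in P$ and $a \in \mathbb{Z}^n \cap iP$. The nontrivial inclusion is $A(P) \subset K[Ft]$.

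To handle that inclusion, I would take $x^a t^i \in A(P)$, i.e., $a \in \mathbb{Z}^n \cap iP$. The case $i = 0$ forces $a = 0$ and is trivial. For $i \geq 1$, writing $a = i\sum_j \lambda_j v_j$ with $\sum_j \lambda_j = 1$, $\lambda_j \geq 0$, displays $a$ as an integral point of $\mathbb{R}_+\mathcal{A}$, so hypothesis (ii) supplies integers $\alpha_j \geq 0$ with $a = \sum_j \alpha_j v_j$. The decisive step is to match $\sum_j \alpha_j$ with $i$: taking the norm $|\cdot|$ and using $|v_j| = d$ gives $|a| = d\sum_j \alpha_j$, while $a \in iP$ combined with uniformity yields $|a| = id$. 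Hence $\sum_j \alpha_j = i$, and $x^a t^i = \prod_j (x^{v_j}t)^{\alpha_j}$ lies in $K[Ft]$.

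The only subtle point, and the place I expect any obstacle to appear, is recognizing where uniformity is essential: it is precisely what forces the total multiplicity of a decomposition $a = \sum_j \alpha_j v_j$ to equal the dilation index $i$. Without uniformity, an integral $a \in iP$ need not admit a decomposition with exactly $i$ summands, so the argument would fail; this is consistent with Example \ref{unif-ess-mfmch}, which already shows that the uniformity hypothesis cannot be dropped in Theorem \ref{mfmch->diag}.
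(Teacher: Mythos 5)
Your proof is correct and follows essentially the same route as the paper's: reduce (ii)$\Rightarrow$(i) via Proposition~\ref{two-char}(i) to the equality $K[Ft]=A(P)$, and deduce $A(P)\subset K[Ft]$ from the Hilbert basis hypothesis. The only difference is that you spell out the norm computation $|a|=d\sum_j\alpha_j=id$ that forces $\sum_j\alpha_j=i$, a step the paper's proof leaves implicit ("it is seen that\ldots such that $\sum_i\eta_i=b$"); this makes the role of uniformity transparent but is not a different argument.
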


\begin{proof}  By Theorems~\ref{noclu1} and \ref{mfmch->diag} we obtain that
(i) implies (ii). Next we prove that (ii) implies (i). By
Proposition~\ref{two-char}(i) it suffices to prove that $K[Ft]=A(P)$.
Clearly $K[Ft]\subset A(P)$. To show the other
inclusion take $x^at^b\in
A(P)$, i.e., $a\in bP\cap\mathbb{Z}^n$. Then from 
the equality 
$\mathbb{N}{\mathcal A}=\mathbb{R}_+{\mathcal A}\cap\mathbb{Z}^n$ it is seen
that $a=\eta_1v_1+\cdots+\eta_qv_q$ for some $\eta_i$'s in $\mathbb{N}$
such that $\sum_i\eta_i=b$. Thus $x^at^b\in K[Ft]$, as required. \end{proof}  

\begin{corollary}\label{pm-iff-n=gd} Let $\mathcal{C}$ be a
$d$-uniform clutter with $n$ vertices. If $\mathcal{C}$ has 
the max-flow min-cut property, then $\mathcal{C}$ has a perfect
matching if and only if $n=d\alpha_0(\mathcal{C})$.
\end{corollary}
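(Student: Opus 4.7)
The plan is to handle the two directions separately, each using results already established.

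For the forward direction, suppose $\mathcal{C}$ has a perfect matching $f_1,\ldots,f_r$. Since $\mathcal{C}$ is $d$-uniform and the $f_i$ are pairwise disjoint with union $X$, a count of vertices gives $n = rd$. The max-flow min-cut property implies via Theorem~\ref{noclu1} that $Q(A)$ is integral, so Proposition~\ref{integral+uniforme+pm-cor} applies and yields $r = \alpha_0(\mathcal{C})$. Substituting gives $n = d\alpha_0(\mathcal{C})$.

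For the converse, assume $n = d\alpha_0(\mathcal{C})$ and set $g := \alpha_0(\mathcal{C})$. The key observation is that the max-flow min-cut hypothesis forces the K\"onig property on $\mathcal{C}$: taking $\alpha = \mathbf{1}$ in the LP-duality equation of Definition~\ref{mfmc-def} gives integer optimizers on both sides, and the common value is simultaneously $\alpha_0(\mathcal{C})$ (the minimum) and $\beta_1(\mathcal{C})$ (the maximum). Hence $\beta_1(\mathcal{C}) = g$, so there exist $g$ pairwise disjoint edges $f_1,\ldots,f_g$ of $\mathcal{C}$. Because $\mathcal{C}$ is $d$-uniform and the $f_i$ are disjoint, the union $f_1 \cup \cdots \cup f_g$ has cardinality $gd = d\alpha_0(\mathcal{C}) = n$, so it equals $X$. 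Thus $f_1,\ldots,f_g$ form a perfect matching of $\mathcal{C}$.

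There is no real obstacle here: both directions reduce to combining Theorem~\ref{noclu1} (MFMC implies integrality of $Q(A)$ and, via $\alpha=\mathbf{1}$, the K\"onig property) with Proposition~\ref{integral+uniforme+pm-cor}. The only subtlety worth flagging is the need to justify that MFMC delivers the K\"onig equality $\beta_1(\mathcal{C}) = \alpha_0(\mathcal{C})$ for the converse direction; once this is in hand, the counting argument closes the proof immediately.
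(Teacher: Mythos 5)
Your proof is correct, and the forward direction matches the paper's in substance: count $n = rd$ from the perfect matching, then apply Proposition~\ref{integral+uniforme+pm-cor} (after deducing integrality of $Q(A)$ via Theorem~\ref{noclu1}) to get $r = \alpha_0(\mathcal{C})$. The converse, however, follows a genuinely different and more elementary route. The paper's argument is polyhedral and invokes fairly heavy machinery: it passes to the blocker $Q(B)$, uses \cite[Theorem~1.17]{cornu-book} to conclude $Q(B)$ is integral, expresses $\mathbf{1}/g$ in the decomposition $Q(B)=\mathbb{R}_+^n+{\rm conv}(v_1,\ldots,v_q)$ to deduce $\mathbf{1}\in\mathbb{R}_+\mathcal{A}\cap\mathbb{Z}^n$, and then applies the Hilbert basis result Theorem~\ref{mfmch->diag}(b) to write $\mathbf{1}$ as a nonnegative integer combination of the $v_i$. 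Your version sidesteps all of this: MFMC yields the K\"onig property $\beta_1(\mathcal{C})=\alpha_0(\mathcal{C})$ by the LP duality argument with $\alpha=\mathbf{1}$ (exactly the content of Proposition~\ref{aug25-06}), and then $g$ disjoint $d$-edges already cover $gd=n$ vertices, so they form a perfect matching. Both are valid; your route is shorter and relies only on the elementary consequence of MFMC via duality, whereas the paper's route illustrates an application of its deeper Hilbert basis theorem at the cost of invoking blocker duality and normality. You could have cited Proposition~\ref{aug25-06} directly rather than re-deriving K\"onig from the LP equation, but re-deriving it is harmless and keeps the argument self-contained.
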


\begin{proof}  $\Rightarrow$) As $Q(A)$ is integral and $\mathcal{C}$
has a perfect 
matching, from the proof of
Proposition~\ref{integral+uniforme+pm} we obtain the equality 
$n=d\alpha_0(\mathcal{C})$.

$\Leftarrow$) We set $g=\alpha_0(\mathcal{C})$. Let $u_1,\ldots,u_s$
be the characteristic vectors of the
minimal vertex covers of $\mathcal C$ and let $B$ be the matrix
with column vectors $u_1,\ldots,u_s$. Then
$\langle\mathbf{1},u_i\rangle\geq g$ for all $i$ because any minimal
vertex  
cover of $\mathcal{C}$ has at least $g$ vertices. Thus the 
vector $\mathbf{1}/g$ belongs to the polyhedron $Q(B)=\{x\vert
x\geq0;xB\geq\mathbf{1}\}$. As $Q(A)$ is integral, by 
\cite[Theorem~1.17]{cornu-book} we get that $Q(B)$ is an integral
polyhedron. Consequently
$$ 
Q(B)=\mathbb{R}_+^n+{\rm conv}(v_1,\ldots,v_q),
$$
where $\mathcal{A}=\{v_1,\ldots,v_q\}$ is the set of column vectors 
of the incidence matrix of
$\mathcal{C}$. Therefore since the rational vector $\mathbf{1}/g$ is
in $Q(B)$ we can write
$$
\mathbf{1}/g=\delta+\mu_1v_1+\cdots+\mu_qv_q\ \ \
(\delta\in\mathbb{R}_+^n;\, \mu_i\geq 0;\, \mu_1+\cdots+\mu_q=1).
$$
Hence $n/g=|\delta|+(\sum_{i=1}^q\mu_i)d=|\delta|+d$. Since $n=dg$, we
obtain that $\delta=0$. Thus the vector $\mathbf{1}$ is in 
$\mathbb{R}_+\mathcal{A}\cap \mathbb{Z}^n$. By
Theorem~\ref{mfmch->diag}(b) this intersection is equal to 
$\mathbb{N}\mathcal{A}$. Then we can write
$\mathbf{1}=\eta_1v_1+\cdots+\eta_qv_q$, for some
$\eta_1,\ldots,\eta_q$ 
in $\mathbb{N}$. Hence it is readily seen that $\mathcal{C}$ has a
perfect matching. \end{proof}  

Packing problems of clutters occur in many contexts of combinatorial
optimization \cite{cornu-book,Schr2}, especially where the question
of whether a linear program and its dual have integral optimum solutions is
fundamental.  

\begin{definition}\label{pp-def}\rm 
A clutter $\mathcal C$ satisfies the {\it packing
property\/} 
if all its minors satisfy the K\"onig property, i.e., 
$\alpha_0({\mathcal C}')=\beta_1({\mathcal C}')$ 
for any minor ${\mathcal C}'$ of $\mathcal C$.
\end{definition}

To study linear algebra properties and ring theoretical properties 
of uniform clutters with the packing property we need 
the following result. This interesting result of Lehman is essential in the
proofs of Theorem~\ref{jan18-04} and Corollary~\ref{may28-06} because
it allows to use the structure theorems presented in
Section~\ref{section2}, it also allows to state some conjectures 
about the packing property. 

\begin{theorem}{\rm(A. Lehman \cite{lehman}; see \cite[Theorem~1.8]{cornu-book})}
\label{lehman} If\, a clutter $\mathcal C$ has the packing property, then $Q(A)$ is
integral.
\end{theorem}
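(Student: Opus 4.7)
The plan is to argue by contradiction using a minimal counterexample, reduce to the minimally non-ideal case, and then invoke Lehman's classical structural theorem for such clutters to derive a contradiction with the K\"onig property supplied by the packing hypothesis.

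First, I would suppose that some clutter has the packing property but non-integral $Q(A)$, and let $\mathcal{C}$ be such a clutter with the fewest vertices. The packing property passes to minors by definition, and the integrality of the set covering polyhedron is also preserved under minors (cf.\ \cite[Theorem~78.2]{Schr2}). Thus minimality forces every proper minor of $\mathcal{C}$ to be ideal, so $\mathcal{C}$ is minimally non-ideal (MNI). Next, I would fix a fractional extreme point $x^*$ of $Q(A)$ and use the same minimality to rule out coordinates $x^*_i \in \{0,1\}$: a zero coordinate would make $\mathcal{C}\setminus\{x_i\}$ a smaller counterexample, and a coordinate equal to $1$ would do the same for the contraction. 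Hence $0 < x^*_i < 1$ for every $i$, which is the standard ``nondegenerate fractional vertex'' situation.

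The next step is to invoke Lehman's structural theorem for MNI clutters. In the non-degenerate case it yields an $n\times n$ nonsingular $\{0,1\}$-submatrix $\bar A$ of $A$ whose rows are edges tight at $x^*$, together with a companion $n\times n$ nonsingular $\{0,1\}$-matrix $\bar B$ whose rows are characteristic vectors of minimum vertex covers, and constants $r=\alpha_0(\mathcal{C})$ and $d$ such that every row of $\bar A$ sums to $d$, every row of $\bar B$ sums to $r$, and $\bar A\bar B^T = J + (rd-n)I$ with $rd-n>0$; in the degenerate case $\mathcal{C}$ is one of a short list of explicit clutters (odd-hole / odd-antihole type) for which K\"onig fails by direct inspection. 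The contradiction is then extracted by applying the packing hypothesis to $\mathcal{C}$ itself: this produces a matching $M$ of size $r$, so counting incidences in $\bar A$ forces $rd\le n$, contradicting $rd-n>0$ in the non-degenerate case and contradicting the direct K\"onig failure in the degenerate case.

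The main obstacle is, unsurprisingly, the invocation of Lehman's structural analysis of MNI clutters: essentially all of the substantive work of the theorem is concentrated there, and deriving the combinatorial identity $\bar A\bar B^T = J + (rd-n)I$ from first principles is a delicate argument involving the LP-duality characterization of extreme points of $Q(A)$ and the interaction between $\mathcal{C}$ and its blocker. Granted Lehman's theorem, however, the present statement follows from a short counting argument using the K\"onig property supplied by packing; without it, the implication ``every minor has K\"onig $\Rightarrow$ $Q(A)$ is integral'' does not seem accessible by elementary means, which explains why the result is attributed to Lehman.
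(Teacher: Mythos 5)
The paper does not supply a proof of this statement: it is quoted as a known theorem of Lehman with a pointer to \cite[Theorem~1.8]{cornu-book}, so there is no internal argument to compare against. Your outline reproduces the standard textbook proof, and its skeleton is sound: pass to a vertex-minimal counterexample, use the fact that both the packing property and idealness of $Q(A)$ are preserved under minors to conclude the counterexample is minimally non-ideal, invoke Lehman's structural theorem for MNI clutters, and contradict the K\"onig equality $\alpha_0(\mathcal{C})=\beta_1(\mathcal{C})$ furnished by packing. You are also right that essentially all the depth is concentrated in Lehman's structural theorem itself.

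Two inaccuracies in the details are worth flagging. First, the degenerate minimally non-ideal clutters are the degenerate projective planes $J_t$ ($t\ge 2$), not ``odd-hole / odd-antihole type'' examples; odd holes are \emph{non-}degenerate MNI clutters. The K\"onig failure for $J_t$ is still immediate ($\alpha_0(J_t)=2$ while $\beta_1(J_t)=1$), so the degenerate branch closes, but the classification should be named correctly. Second, the final count should not be phrased as ``counting incidences in $\bar A$'': the $r$ pairwise disjoint edges produced by the K\"onig property need not be rows of the core $\bar A$. What one actually uses is that feasibility of the fractional vertex $\frac{1}{d}\mathbf{1}\in Q(A)$ forces \emph{every} edge of $\mathcal{C}$ to have cardinality at least $d$, so an $r$-edge matching occupies at least $rd$ of the $n$ vertices, giving $rd\le n$ and contradicting Lehman's inequality $rd>n$. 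With these corrections the argument is complete modulo Lehman's theorem.
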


\begin{proposition}{\rm\cite{cornu-book}}\label{aug25-06} If a clutter
$\mathcal C$ has the max-flow
min-cut property,  
then $\mathcal C$ has the packing property. 
\end{proposition}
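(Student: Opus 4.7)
\medskip
\noindent\textbf{Proof plan.}
Recall from Definition~\ref{pp-def} that $\mathcal{C}$ has the packing property when every minor $\mathcal{C}'$ (including $\mathcal{C}$ itself) satisfies $\alpha_0(\mathcal{C}')=\beta_1(\mathcal{C}')$. My strategy rests on two facts: MFMC for a single clutter already implies the K\"onig property, and MFMC is hereditary under taking minors. Together they yield the K\"onig property for every minor of $\mathcal{C}$, which is exactly the packing property.

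For the first fact, I would specialize Eq.~(\ref{jun6-2-03-1}) to $\alpha=\mathbf{1}$. Any $\{0,1\}$-valued integral optimum of the primal $\min\{\langle\mathbf{1},x\rangle\mid x\geq 0,\,xA\geq\mathbf{1}\}$ is the characteristic vector of a minimum vertex cover, so its value is $\alpha_0(\mathcal{C})$; any $\{0,1\}$-valued integral optimum of the dual $\max\{\langle y,\mathbf{1}\rangle\mid y\geq 0,\,Ay\leq\mathbf{1}\}$ is the characteristic vector of a maximum matching, so its value is $\beta_1(\mathcal{C})$. The MFMC hypothesis provides such optima on both sides, and LP-duality forces $\alpha_0(\mathcal{C})=\beta_1(\mathcal{C})$.

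For the second fact it suffices, by iteration, to handle a single deletion and a single contraction. For the deletion $\mathcal{C}\setminus\{x_i\}$: given an integral vector $\alpha'\geq 0$ on $X\setminus\{x_i\}$, I would extend to $\alpha$ on $X$ by setting $\alpha_i=0$. The dual constraint at vertex $x_i$ reads $\sum_{e\ni x_i} y_e\leq 0$, forcing $y_e=0$ for every edge containing $x_i$, so the surviving dual system coincides with the dual for $(\mathcal{C}\setminus\{x_i\},\alpha')$ and the corresponding restriction of the primal is feasible and equally cheap; integral optima therefore transfer on both sides. For the contraction by $x_j=1$, I would invoke Theorem~\ref{noclu1}(v): MFMC is equivalent to TDI-ness of the system $xA\geq\mathbf{1}$, $x\geq 0$, and the standard Schrijver-type machinery shows that TDI systems are preserved under substituting a variable to a nonnegative integer, which is exactly the operation that realizes a clutter contraction at the LP level.

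\emph{Main obstacle.} I expect the contraction step to be the delicate one: when $x_j$ is set to $1$, edges containing $x_j$ shrink to proper subsets and some of these shrunken sets become non-minimal and are deleted, so the incidence matrix of the contracted clutter $\mathcal{C}''$ has strictly fewer columns than the row-deleted submatrix of $A$. Checking that MFMC survives this combinatorial cleanup---equivalently, that the TDI-preservation argument passes cleanly from the raw system of inequalities to the minimally-generated edge ideal of $\mathcal{C}''$---is the nontrivial bookkeeping that carries the proof.
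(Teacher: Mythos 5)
Your argument matches the paper's: reduce to showing $\mathcal{C}$ itself has the K\"onig property via minor-closedness of MFMC, then specialize the LP-duality equation to $\alpha=\mathbf{1}$ and read off $\alpha_0(\mathcal{C})$ from the primal and $\beta_1(\mathcal{C})$ from the dual. The paper simply cites minor-closedness of MFMC as known (it is in Cornu\'ejols' book), whereas you sketch the deletion/contraction verification; the core argument is identical.
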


\begin{proof}  It suffices to prove that $\mathcal{C}$ has the K\"onig property
because the max-flow min-cut property is closed under taking minors.
Making $\alpha=\mathbf{1}$ in Eq.~(\ref{jun6-2-03-1}), we get
that the LP-duality equation:
$$
{\rm min}\{\langle\mathbf{1},x\rangle \vert\, x\geq 0; xA\geq \mathbf{1}\}=
{\rm max}\{\langle y,\mathbf{1}\rangle \vert\, y\geq 0; Ay\leq\mathbf{1}\}
$$
has optimum integral solutions $x$, $y$. To complete the 
proof notice that the left hand side of this 
equality is $\alpha_0({\mathcal C})$ and the right hand side 
is $\beta_1({\mathcal C})$. \end{proof}  

Conforti and Cornu\'ejols conjecture that the converse is also
true:

\begin{conjecture}{\rm(\cite{CC},
\cite[Conjecture~1.6]{cornu-book})}\label{conforti-cornuejols1} 
\rm  If a clutter $\mathcal C$ satisfies the
packing property, then $\mathcal C$ has the max-flow min-cut property.
\end{conjecture}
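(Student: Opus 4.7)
The plan is to translate the conjecture into an algebraic statement about the Rees algebra and attempt an attack via minor-minimal counterexamples. By Theorem~\ref{lehman}, the packing property already implies that $Q(A)$ is integral, and by the equivalence (ii)$\Leftrightarrow$(iv) in Theorem~\ref{noclu1}, the max-flow min-cut property is equivalent to $R[It]$ being normal together with $Q(A)$ integral. Hence the entire content of the conjecture reduces to proving that $R[It]$ is \emph{normal} whenever $\mathcal{C}$ has the packing property, or equivalently (by Lemma~\ref{dec20-07}) that $\mathbb{N}\mathcal{A}'=\mathbb{Z}^{n+1}\cap\mathbb{R}_+\mathcal{A}'$, where $\mathcal{A}'=\{e_1,\ldots,e_n,(v_1,1),\ldots,(v_q,1)\}$.

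First I would assume, for contradiction, a minor-minimal counterexample $\mathcal{C}$: the packing property holds, $R[It]$ is not normal, and every proper minor has the max-flow min-cut property. Since the packing property is preserved under taking minors, this minimality is self-consistent. A failure of normality yields an integer vector $(a,b)\in\mathbb{Z}^{n+1}\cap\mathbb{R}_+\mathcal{A}'$ with $(a,b)\notin\mathbb{N}\mathcal{A}'$; in monomial terms, $x^a\in I^{(b)}\setminus I^b$. I would choose such a witness minimizing $b$ (and, secondarily, $|a|$) and use the minor-minimality to force $\mathrm{supp}(x^a)=X$, so that the obstruction genuinely involves the whole clutter.

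Next I would exploit K\"onig's equality on every minor: for each integral vector $\alpha\geq 0$, restricting to $\mathrm{supp}(\alpha)$ (and using $1$'s elsewhere) produces a minor whose packing gives $\alpha_0=\beta_1$. The aim is to convert the witness $(a,b)$ into a certified dual integer matching of size $b$ in some minor, directly contradicting $x^a\notin I^b$; alternatively, to show that the fractional cover/matching structure encoded by $(a,b)$ must already contain a proper sub-configuration violating packing. For the uniform case, the diagonalization results of Section~\ref{algebras-tdi} together with Theorem~\ref{jan18-04} and the partial result Corollary~\ref{cc-conj-supp} for $\alpha_0(\mathcal{C})=2$ suggest a double induction on $\alpha_0$ and on $d$, with Proposition~\ref{nov20-03} supplying the key partition into minimal vertex covers.

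The hard part, which I do not expect the tools of this paper alone to resolve, is bridging the gap between the K\"onig property for the single right-hand side $\mathbf{1}$ (packing) and total dual integrality for \emph{all} integer right-hand sides (MFMC); this gap \emph{is} the content of the Conforti--Cornu\'ejols conjecture and has been open since the 1990s. Significant partial results (Seymour for binary clutters, the bipartite case via ordinary K\"onig, and several ideal-matrix classes) all proceed by exploiting extra structure unavailable in the abstract packing hypothesis. The main obstacle is therefore conceptual: one must explain, in structural terms, why $\mathbb{N}\mathcal{A}'=\mathbb{Z}^{n+1}\cap\mathbb{R}_+\mathcal{A}'$ follows merely from K\"onig holding on every minor, and no proof of this implication is currently known.
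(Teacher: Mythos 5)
You have correctly recognized that this is the Conforti--Cornu\'ejols conjecture, which the paper explicitly states is open (``To the best of our knowledge this conjecture is open''), and that no proof exists in the paper or elsewhere. Your honest assessment is the right one, and your algebraic reformulation --- using Lehman's theorem (Theorem~\ref{lehman}) to get $Q(A)$ integral, then Theorem~\ref{noclu1}(ii)$\Leftrightarrow$(iv) and Lemma~\ref{dec20-07} to reduce the whole question to normality of $R[It]$, i.e.\ to $\mathbb{N}\mathcal{A}'=\mathbb{Z}^{n+1}\cap\mathbb{R}_+\mathcal{A}'$ --- is exactly the translation the authors themselves adopt. In the uniform case they phrase it as Conjecture~\ref{con-cor-vila} ($K[Ft]=A(P)$) via Proposition~\ref{two-char}(i), and they set up the weaker ``diagonalization'' version Conjecture~\ref{git-val-vi}, which they prove in the restricted setting of Theorem~\ref{jan18-04} (uniform, perfect matching, $\alpha_0=2$). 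Your minor-minimal counterexample framework and the double induction you sketch are reasonable scaffolding, and you correctly point to Proposition~\ref{nov20-03} and Corollary~\ref{cc-conj-supp} as the available partial handles. The obstruction you name --- that packing only certifies K\"onig for the right-hand side $\mathbf{1}$ on every minor, while MFMC/TDI demands integral duals for \emph{all} nonnegative integral right-hand sides, and that nothing in the packing hypothesis supplies the intermediate integral dual solutions --- is precisely the crux. Since the statement is a conjecture and not a theorem of the paper, there is no ``paper proof'' to compare against; what you have written is a faithful account of the state of the problem and of how this paper's machinery bears on it.
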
 

To the best of our knowledge this conjecture is open. For uniform
clutters, using Proposition~\ref{two-char} 
and Theorem~\ref{lehman}, we obtain the following  
algebraic version of this conjecture:

\begin{conjecture}\label{con-cor-vila}\rm If $\mathcal C$ is a uniform
clutter with the packing property, then the homogeneous ring
$K[Ft]$ equals the Ehrhart ring $A(P)$. 
\end{conjecture}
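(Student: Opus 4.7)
My plan is to split the equality $K[Ft] = A(P)$ into two separate tasks. The inclusion $K[Ft] \subseteq A(P)$ is immediate, so only $A(P) \subseteq K[Ft]$ is at issue, and I would factor this through the integral closure by proving first that $\overline{K[Ft]} = A(P)$ and then that $K[Ft]$ is already normal.

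For $\overline{K[Ft]} = A(P)$, Proposition~\ref{two-char}(ii) reduces the statement to $\Delta_r(B) = 1$. Since $\mathcal{C}$ has the packing property, Theorem~\ref{lehman} gives that $Q(A)$ is integral, and Proposition~\ref{nov20-03} then supplies a partition $X = X_1 \cup \cdots \cup X_d$ into minimal vertex covers with $|{\rm supp}(x^{v_i}) \cap X_k| = 1$ for all $i, k$. Exactly as in the proof of Theorem~\ref{mfmch->diag}(a), this partition forces a $\mathbb{Z}$-linear relation expressing the last row of $B$ as the sum of the rows indexed by $X_1$, so $B$ is $\mathbb{Z}$-equivalent to $\widetilde{A}$ and $\Delta_r(B) = \Delta_r(A)$. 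The task thus reduces to proving $\Delta_r(A) = 1$ under packing plus uniformity; Theorem~\ref{jan18-04} handles this under the additional hypotheses of a perfect matching and $\alpha_0(\mathcal{C}) = 2$, and I would try to extend the argument by induction on $\alpha_0(\mathcal{C})$, using the minor-closedness of the packing property and the structural constraints that the partition puts on the incidence pattern.

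For the normality of $K[Ft]$, the natural attempt is to adapt the proof of Theorem~\ref{mfmch->diag}(b) to show that every integer point of the cone $\mathbb{R}_+\{(v_1,1),\ldots,(v_q,1)\}$ belongs to the semigroup generated by those same vectors. That proof relied on the normality of $R[It]$, which by Theorem~\ref{noclu1} is equivalent to MFMC and hence is not available here. A substitute would be to combine the explicit irreducible representation of the Rees cone in Theorem~\ref{intiffint} (which holds under packing, via Lehman) with a Carath\'eodory-style lifting controlled by the partition $X_1, \ldots, X_d$: given a lattice point $(a,b)$ in the cone, one would measure its overlap $\langle a, u_k\rangle$ with each $X_k$-cover to pin down how many generators $(v_i,1)$ are needed and to force the decomposition to be integral, exploiting the uniformity condition $|v_i|=d$ to close the counting.

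The main obstacle is transparent: via Proposition~\ref{two-char}(i), proving $K[Ft] = A(P)$ in the generality stated settles the Conforti--Cornu\'ejols conjecture for uniform clutters, since integrality of $Q(A)$ already follows from Lehman. Realistically, the plan above should be regarded as a template for partial progress under additional hypotheses (such as an existing perfect matching, a small covering number, or structural constraints on how edges meet the partition from Proposition~\ref{nov20-03}), along the lines of Theorem~\ref{jan18-04}, rather than a complete road map to the full conjecture.
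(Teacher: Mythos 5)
This statement is a \emph{conjecture} in the paper, not a theorem, so there is no proof of it in the paper to compare your attempt against; the authors explicitly note right after stating it that it is the algebraic reformulation (via Proposition~\ref{two-char}(i) and Lehman's theorem) of the Conforti--Cornu\'ejols conjecture for uniform clutters, and they only establish it under strong additional hypotheses (perfect matching and $\alpha_0(\mathcal{C})=2$) in Section~\ref{section-packing-partition}. Your decomposition of $K[Ft]=A(P)$ into the integral-closure step $\overline{K[Ft]}=A(P)$ plus normality of $K[Ft]$ does mirror the paper's own framing: the integral-closure step is exactly Conjecture~\ref{git-val-vi}, and the remaining normality is exactly the gap between Conjecture~\ref{git-val-vi} and Conjecture~\ref{con-cor-vila}. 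So the plan is aligned with how the paper organizes the problem, and your reduction of $\Delta_r(B)=1$ to $\Delta_r(A)=1$ via the uniform partition is the same manipulation used in Theorem~\ref{mfmch->diag}(a).

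The two substantive steps are, however, left entirely open, and they are precisely where the open problem lives. For $\Delta_r(A)=1$ you propose to ``extend Theorem~\ref{jan18-04} by induction on $\alpha_0(\mathcal{C})$,'' but that theorem's proof is an induction on $r=\mathrm{rank}(A)$ that leans heavily on the rigid structure coming from $\alpha_0(\mathcal{C})=2$ (a two-edge perfect matching, $n=2d$, and the two-by-$d$ grid pattern of $A^t$); no mechanism is offered for replacing this structure when $\alpha_0>2$. For the normality of $K[Ft]$ you correctly observe that the Theorem~\ref{mfmch->diag}(b) argument is unavailable because it invokes normality of $R[It]$, which by Theorem~\ref{noclu1} presupposes MFMC; but the proposed substitute (Carath\'eodory lifting controlled by the covers $X_k$) is not worked out, and in fact cannot succeed unconditionally: if the Conforti--Cornu\'ejols conjecture fails for some uniform clutter, then by Proposition~\ref{two-char}(i) one would have $K[Ft]\neq A(P)$ for that clutter, so any genuine proof of normality here would have to secretly establish MFMC itself. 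Your closing paragraph already recognizes all of this; the proposal is an honest sketch of the reduction, not a proof, and should be read as such.
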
 

Conjecture~\ref{conforti-cornuejols1} together with 
Proposition~\ref{two-char} 
suggest the following: 

\begin{conjecture}{\rm \cite{reesclu}}
\label{git-val-vi}\rm\ If $\mathcal C$ is a uniform clutter with the
packing property, then any of the following equivalent conditions
hold: 
\begin{enumerate}
\item[(a)] $\mathbb{Z}^{n+1}/((v_1,1),\ldots,(v_q,1))$ is a free
group.
\item[(b)] $\Delta_r(B)=1$,  where $B$ is the
matrix with column vectors $(v_1,1),\ldots,(v_q,1)$ and $r$ is the 
rank of $B$. 
\item[(c)] $B$ diagonalizes over $\mathbb{Z}$ to an identity
matrix.
\item[(d)] $\overline{K[Ft]}=A(P)$, where $A(P)$ is the Ehrhart
ring of $P={\rm conv}(v_1,\ldots,v_q)$. 
\end{enumerate}
\end{conjecture}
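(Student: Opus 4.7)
The plan is to first establish the equivalence of (a)--(d) and then tackle the main implication from the packing hypothesis. For the equivalences, (a) $\Leftrightarrow$ (b) $\Leftrightarrow$ (c) is a standard consequence of the Smith normal form theorem: for an integer matrix $B$ of rank $r$, the invariant factors are all $1$ exactly when $\Delta_r(B)=1$, exactly when the cokernel $\mathbb{Z}^{n+1}/\langle(v_1,1),\dots,(v_q,1)\rangle$ is torsion-free (and hence free, since it is finitely generated), and exactly when $B$ diagonalizes over $\mathbb{Z}$ to an identity block. The equivalence (b) $\Leftrightarrow$ (d) is exactly the content of Proposition~\ref{two-char}(ii). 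These steps are routine.

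Next, to establish the conditions, the key step is to show $\Delta_r(B)=1$ under the packing hypothesis. By Lehman's Theorem~\ref{lehman}, the packing property forces $Q(A)$ to be integral, so Proposition~\ref{nov20-03} applies: there is a partition $X=X_1\cup\cdots\cup X_d$ into pairwise disjoint minimal vertex covers with $|\operatorname{supp}(x^{v_i})\cap X_k|=1$ for all $i,k$. Following the argument in the proof of Theorem~\ref{mfmch->diag}(a), after permuting variables so that $X_1=\{x_1,\dots,x_{|X_1|}\}$, the bottom row $\mathbf{1}$ of $B$ equals the sum of the first $|X_1|$ rows of $B$, so $B$ is $\mathbb{Z}$-equivalent to the matrix $\widetilde{A}$ obtained from $A$ by appending a zero row. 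This yields $\Delta_r(B)=\Delta_r(A)$ and reduces the conjecture to showing $\Delta_r(A)=1$ from the packing property alone.

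The main obstacle is precisely this last reduction. In Theorem~\ref{mfmch->diag}(a), MFMC is used via Proposition~\ref{two-char}(i) to deduce $K[Ft]=A(P)$, hence $\overline{K[Ft]}=A(P)$ (as $A(P)$ is always normal), hence $\Delta_r(B)=1$ by Proposition~\ref{two-char}(ii). Under only the packing hypothesis we do not have $K[Ft]=A(P)$: that equality is itself Conjecture~\ref{con-cor-vila}, equivalent under Proposition~\ref{two-char}(i) to the Conforti--Cornu\'ejols Conjecture~\ref{conforti-cornuejols1}. So a genuinely new input is required. A plausible strategy would be induction on minors --- the packing property, the uniformity, and the partition structure all descend to minors --- attempting to reach a contradiction at a minor-minimal counterexample by combining the K\"onig equality $\alpha_0=\beta_1$ on every minor with the Rees-cone description of Theorem~\ref{intiffint} and the normality criterion of Lemma~\ref{dec20-07}. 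I expect, however, that controlling the lattice invariant $\Delta_r(B)$ from only the combinatorial K\"onig data on minors is essentially as deep as Conforti--Cornu\'ejols itself, and this is the fundamental difficulty.
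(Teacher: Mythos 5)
Your analysis is essentially correct and, importantly, matches the paper's own treatment of this statement: it is posed as a \emph{conjecture}, not a theorem, and the paper does not prove it in full generality. You correctly handle the routine part, namely the pairwise equivalence of (a)--(d): the Smith-normal-form argument gives (a) $\Leftrightarrow$ (b) $\Leftrightarrow$ (c), and Proposition~\ref{two-char}(ii) gives (b) $\Leftrightarrow$ (d). You also correctly reduce, via Lehman's Theorem~\ref{lehman}, Proposition~\ref{nov20-03}, and the uniformity observation that $\mathbf{1}$ is the sum of the rows indexed by $X_1$, to showing $\Delta_r(A)=1$ from packing alone, and you correctly diagnose that this last step is not accessible by the techniques at hand: Proposition~\ref{two-char}(i) requires MFMC, and under packing alone the equality $K[Ft]=A(P)$ is itself Conjecture~\ref{con-cor-vila}, equivalent to Conforti--Cornu\'ejols. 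Your assessment that a genuinely new idea would be needed is exactly why this remains open.

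For completeness, note that the paper does establish this conjecture in one special case, Theorem~\ref{jan18-04}, under the additional hypotheses that $\mathcal{C}$ has a perfect matching and $\alpha_0(\mathcal{C})=2$. That proof proceeds by induction on ${\rm rank}(A)$ with explicit integer row and column operations, exploiting the K\"onig property of carefully chosen minors to pin down the structure of $A^t$. If you want to see how far the minor-induction strategy you sketch can actually be pushed, that argument is the place to look; it also illustrates why the constraints $\alpha_0=2$ and the existence of a perfect matching are doing real work, since they force the highly rigid $2$-partitionable structure analyzed in Theorem~\ref{another-mt}. In short, you have correctly identified the statement as open, reproduced the standard reductions, and honestly flagged the gap; no complete proof was to be expected.
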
 

This conjecture will be proved in
Section~\ref{section-packing-partition} 
for uniform clutters with a perfect matching
and vertex covering number equal to $2$ (see Theorem~\ref{jan18-04}). 

\section{Minors and the packing
property}\label{section-packing-partition}
Let $\mathcal{C}$ be a $d$-uniform clutter with vertex set
$X=\{x_1,\ldots,x_n\}$, let
$x^{v_1},\ldots,x^{v_q}$ be the minimal set 
of generators of the edge ideal $I=I(\mathcal{C})$, and let $A$ be
the incidence matrix of $\mathcal{C}$ with column vectors 
$v_1,\ldots,v_q$. 
We denote the transpose of $A$ by $A^t$. All clutters considered in
this section have vertex covering number equal to $2$. We shall be
interested in 
studying the relationships between 
the combinatorics of $\mathcal{C}$, the algebraic properties 
of the Ehrhart ring $A(P)$, the linear algebra of the incidence
matrix $A$, and the algebra of the edge ideal $I(\mathcal{C})$.

We begin by showing that nice combinatorial properties 
of $\mathcal{C}$ are reflected in nice linear algebra properties of
$A$. The following is one of the main results of this section. 
It gives some support to Conjecture~\ref{git-val-vi}.

\begin{theorem}\label{jan18-04} Let $\mathcal{C}$ be a 
$d$-uniform clutter with a perfect matching such that
$\mathcal{C}$ has the packing property and $\alpha_0(\mathcal{C})=2$.
If $A$ has rank $r$, 
then 
$$
\Delta_r\hspace{-1mm}\left(\hspace{-2mm}
\begin{array}{c}A\\ 
\mathbf{1}\end{array}\hspace{-2mm}\right)=1.
$$
\end{theorem}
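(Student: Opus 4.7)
The plan is a four-step reduction, with the last step being the main obstacle.

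\emph{Step 1 (structural setup).} I will begin by applying Lehman's Theorem~\ref{lehman}: the packing hypothesis forces $Q(A)$ to be integral. Combined with the perfect matching, Proposition~\ref{integral+uniforme+pm-cor} yields a partition $X=X_1\sqcup\cdots\sqcup X_d$ of the vertex set into $d$ minimal vertex covers of size $\alpha_0(\mathcal{C})=2$. Writing $X_i=\{x_i,y_i\}$, Lemma~\ref{nov19-03} (via Proposition~\ref{nov20-03}) guarantees $|e\cap X_i|=1$ for every edge $e$ and every $i$, so each edge is a transversal of the partition. I then encode $e$ by $\sigma_e\in\{0,1\}^d$ via $\sigma_e(i)=1\iff y_i\in e$; the two perfect-matching edges correspond to $\sigma=\mathbf{0}$ and $\sigma=\mathbf{1}$.

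\emph{Step 2 (matrix reduction).} For each $i$, $\mathrm{row}_{x_i}(A)+\mathrm{row}_{y_i}(A)=\mathbf{1}_q$, so in $\binom{A}{\mathbf{1}}$ every $\mathrm{row}_{x_i}(A)$ is an integer combination of $\mathrm{row}_{y_i}(A)$ and the appended $\mathbf{1}$-row. After these integer row operations, $\binom{A}{\mathbf{1}}$ and the $(d+1)\times q$ matrix $M'$ whose columns are the vectors $(\sigma_e,1)^t$ share the same nonzero invariant factors; hence $\Delta_r\binom{A}{\mathbf{1}}=\Delta_r(M')$, and Theorem~\ref{another-mt}(a) already bounds $r\le d+1$.

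\emph{Step 3 (reduction to lattice saturation).} Set $L'=\sum_e\mathbb{Z}(\sigma_e,1)\subset\mathbb{Z}^{d+1}$ and $L''=\sum_e\mathbb{Z}\sigma_e\subset\mathbb{Z}^d$. Because $(\sigma_{e_1},1)=(\mathbf{0},1)\in L'$, one checks directly that $L'=L''\times\mathbb{Z}$ inside $\mathbb{Z}^d\times\mathbb{Z}$, so $\mathbb{Z}^{d+1}/L'\cong\mathbb{Z}^d/L''$. The equality $\Delta_r(M')=1$ is equivalent to $L'$ being saturated in $\mathbb{Z}^{d+1}$, and hence to $L''$ being saturated in $\mathbb{Z}^d$. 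Concretely, this amounts to exhibiting $r-1$ edges whose $\sigma$-vectors form a unimodular $\mathbb{Z}$-basis of the rank-$(r-1)$ sublattice they span; equivalently, that the $0$--$1$ matrix $N=(\sigma_e)_e$ contains an $(r-1)\times(r-1)$ submatrix of determinant $\pm 1$.

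\emph{Main obstacle.} The hard part is Step~3: proving $L''$ saturated using packing (the rest of the argument only invoked Lehman's consequence that $Q(A)$ is integral). A natural attack is by contradiction: assume $L''$ is not saturated, choose a prime $p$ and $v\in\mathbb{Z}^d\setminus L''$ with $pv\in L''$, and reduce the relation $pv=\sum_e\lambda_e\sigma_e$ mod $p$ to obtain a nontrivial $\mathbb{F}_p$-linear dependence among the $\sigma_e$. The plan is to translate this dependence into a clutter-theoretic obstruction---for example, a minor of $\mathcal{C}$ whose K\"onig property fails---contradicting packing. An alternative route splits on the normality of $I(\mathcal{C})$: in the normal case, $Q(A)$ integral together with Theorem~\ref{noclu1} gives MFMC and Theorem~\ref{mfmch->diag} finishes the proof; in the non-normal case one passes to a minimally non-normal minor, where Theorem~\ref{another-mt}(d) pins the rank at $d+1$ and provides enough rigidity to construct an explicit $d\times d$ unimodular submatrix of $N$. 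Either route must invoke the packing hypothesis in an essential way beyond what Lehman already supplies.
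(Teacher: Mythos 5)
Your Steps 1 and 2 match the paper's opening moves exactly: Lehman's theorem gives integrality of $Q(A)$, Proposition~\ref{integral+uniforme+pm-cor} gives the $2$-partition, and reducing $\binom{A}{\mathbf{1}}$ modulo the row relations $\mathrm{row}_{x_i}+\mathrm{row}_{y_i}=\mathbf{1}$ is precisely what the paper does (it phrases this as replacing $\binom{A}{\mathbf 1}$ by $A^t$ and showing the Smith normal form of $A^t$ is the identity). Step~3 is a correct restatement of the goal as lattice saturation.

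But the proof stops where the real work begins. You explicitly label the crucial step --- using the packing property (beyond Lehman) to force saturation of $L''$ --- as the ``main obstacle'' and then offer only two unexecuted sketches. Neither sketch closes the gap. The $\mathbb{F}_p$-dependence route is not carried beyond ``translate into a clutter-theoretic obstruction''; no mechanism is given for converting a mod-$p$ linear relation among the $\sigma_e$ into a minor that violates K\"onig. The normality-split route has a substantive problem in the non-normal branch: passing to a minimally non-normal minor does not in general preserve $d$-uniformity, the existence of a perfect matching, or $\alpha_0=2$, all of which your argument needs; and even granting it, Theorem~\ref{another-mt}(d) only forces $\mathrm{rank}(A)=d+1$ --- it gives no unimodular $d\times d$ submatrix of $N$, which is the actual content to be produced. (The normal branch, by contrast, is fine: normality of $R[It]$ plus $Q(A)$ integral gives MFMC by Theorem~\ref{noclu1}, and the internals of the proof of Theorem~\ref{mfmch->diag} yield $\Delta_r(B)=1$.)

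The paper's actual argument is quite different and more constructive: it runs an induction on $r=\mathrm{rank}(A)$. The base case $r=2$ is handled by observing that the two perfect-matching columns already span a saturated rank-$2$ sublattice. For $r\ge3$ it first passes to a minor $J$ (obtained by setting $x_i=1$ for $i>2(r-1)$) to reduce to the extremal case $d=r-1$, then applies the inductive hypothesis to the first $2(d-1)$ columns and exploits the K\"onig property of another explicit minor $K$ of $I$ to control the last two columns; an explicit sequence of integer row/column operations then kills the remaining off-diagonal entries because the rank is exactly $r$. That inductive use of minors and K\"onig is the essential mechanism your proposal is missing.
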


\begin{proof}  By the Lehman theorem the polyhedron $Q(A)$ is
integral (see Theorem~\ref{lehman}). Thus by  
Proposition~\ref{integral+uniforme+pm-cor} there is a perfect
matching $f_1,f_2$ of $\mathcal{C}$ with $X=f_1\cup f_2$ and there is a
partition $X_1,\ldots,X_d$ of $X$ such that  $X_i$ is a minimal vertex
cover of $\mathcal{C}$ for all $i$, $|X_i|=2$ for all $i$, and  
\begin{equation}\label{jan19-04}
|{\rm supp}(x^{v_i})\cap X_k|=1\ \ \ \ \forall\ i,k.
\end{equation}
Thus we may assume that $X_i=\{x_{2i-1},x_{2i}\}$ for $i=1,\ldots,d$.
Notice that 
$n=2d$ and ${\rm rank}(A)\geq 2$ because $X=f_1\cup f_2$.

We proceed by induction on $r={\rm rank}(A)$. Since the sum of the first
two rows of $A$ is equal to $\mathbf 1$, it suffices to
prove that $1$ is the only invariant factor of $A^t$ or equivalently
that the Smith normal form of $A^t$ is the identity. 

First we show the case $r=2$ which is the base case for the induction
process. We may assume that $v_1$ and $v_2$ are the characteristic
vectors of $f_1$ and $f_2$ respectively. The matrix $A_0$ with rows
$v_1,v_2$ is equivalent over $\mathbb{Z}$ to 
$$
\left(\hspace{-1mm}
\begin{array}{ccccc}
1&0&0&\cdots&0\\ 
0&1&0&\cdots &0
\end{array}\hspace{-1mm}
\right)
$$
and this is the Smith normal form of $A_0$. 
Thus the quotient group $M=\mathbb{Z}^n/\mathbb{Z}\{v_1,v_2\}$ is
torsion-free, 
where $\mathbb{Z}\{v_1,v_2\}$ is the subgroup of $\mathbb{Z}^n$
generated by $\{v_1,v_2\}$.
We claim that
$\mathbb{Z}\{v_1,v_2\}$ is equal to
$\mathbb{Z}\{v_1,v_2,\ldots,v_q\}$. Since the 
rank of $A$ is $2$, for each $i\geq 3$ there is an integer
$0\neq\eta_i$ such that $\eta_iv_i\in \mathbb{Z}\{v_1,v_2\}$. Hence
the image $\overline{v}_i$ of $v_i$ in $M$ is a torsion element, so
$\overline{v}_i$ must be zero, that is, 
$v_i\in\mathbb{Z}\{v_1,v_2\}$. This completes the proof of the claim.
Therefore
$$
\Delta_2(A^t)=|T(\mathbb{Z}^n/\mathbb{Z}\{v_1,\ldots,v_q\})|=
|T(\mathbb{Z}^n/\mathbb{Z}\{v_1,v_2\})|=1,
$$
where $T(\mathbb{Z}^n/\mathbb{Z}\{v_1,\ldots,v_q\})$ is the torsion
subgroup of $\mathbb{Z}^n/\mathbb{Z}\{v_1,\ldots,v_q\}$, 
the first equality follows from \cite[Theorem~3.9]{JacI} and 
\cite[pp.~187-188]{JacI}. 
Therefore $\Delta_2(A^t)=1$ and the Smith normal form of 
$A^t$ is the identity, as required.

We continue with the induction process by assuming $r\geq 3$. Let
$w_1,\ldots,w_{2d}$ be the columns of $A^t$ and let $V_i$ be the
linear space generated by $w_1,\ldots,w_{2i}$. Notice that, because of
Eq.~(\ref{jan19-04}), for each
odd integer 
$k$ the sum of rows $k$ and $k+1$ of the matrix
$A$ is equal to ${\mathbf 1}=(1,\ldots,1)$, i.e.,
$w_k+w_{k+1}=\mathbf{1}$ for $k$ odd. Thus if $k$ is odd and
we remove columns 
$w_k$ and $w_{k+1}$ from $A^t$ we
obtain a submatrix whose rank is greater than or equal to $r-1$. 
Thus after permuting columns we may assume 
\begin{equation}\label{jan20-04}
\dim(V_i)=\left\{\begin{array}{lll} i+1& \mbox{if}& 1\leq i\leq r-1,\\
r&\mbox{if}& r\leq i.
\end{array}\right.
\end{equation}
Let $J$ be the square-free monomial ideal defined by the rows of the matrix 
$[w_1,\ldots,w_{2(r-1)}]$, where $w_1,\ldots,w_{2(r-1)}$ are column
vectors, and let $\mathcal{D}$ be the clutter associated to the edge
ideal $J$. If $v_i=(v_{i,1},\ldots,v_{i,n})$ for $1\leq i\leq q$, then $J$ 
is generated by the monomials: 
$$
x_1^{v_{1,1}}x_2^{v_{1,2}}\cdots
x_{2(r-1)}^{v_{1,2(r-1)}},\ldots,x_1^{v_{i,1}}
x_2^{v_{i,2}}\cdots
x_{2(r-1)}^{v_{i,2(r-1)}},\ldots,x_1^{v_{q,1}}x_2^{v_{q,2}}\cdots
x_{2(r-1)}^{v_{q,2(r-1)}}.
$$
Thus $J$ is a minor of $I$ because $J$ is obtained from $I$ by making
$x_i=1$ for $i>2(r-1)$. Then a
minimal set of generators of $J$ consists of monomials of degree
$r-1$, $\alpha_0(\mathcal{D})=2$, and $\mathcal{D}$ has a perfect
matching.  Furthermore $\mathcal{D}$ satisfies the
packing property because $J$ is a minor of $I$. If
$[w_1,\ldots,w_{2(r-1)}]$ diagonalizes (over 
the integers) to the identity matrix, so does $A^t$, this follows 
using arguments similar to those used for the case ${\rm rank}(A)=2$. 
Therefore we may harmlessly assume $d=r-1$, $I=J$, and $\mathcal
{C}=\mathcal{D}$. This means that the matrix $A$ has rank $d+1$, the
maximum possible.

Let $B$ be the matrix $[w_1,\ldots,w_{2(d-1)}]$ and let $I'$ be the
monomial ideal defined by the rows of $B$, that is $I'$ is obtained
from $I$ making $x_{2d-1}=x_{2d}=1$. The matrix $B$ has rank $r-1$.
Hence by 
induction hypothesis $B$ diagonalizes to a 
matrix $[I_{r-1},\mathbf{0}]$, where $I_{r-1}$ is the identity matrix
of order $r-1$. Recall that $f_1,f_2$
is a perfect matching of $\mathcal{C}$. Then by permuting rows and
columns we may assume that the matrix $A^t$ is written as:
\begin{small}
$$
\begin{array}{l}
\begin{array}{ccccccc}
1 0& 1 0& 1 0&\cdots& 1 0& 1 0& \leftarrow\\ 
0 1& 0 1& 0 1&\cdots& 0 1& 0 1&\\
\bigcirc &\bigcirc&\bigcirc&\cdots&\bigcirc & 1 0&\leftarrow \\
\vdots &\vdots & & &\vdots&\vdots&\vdots  \\ 
\bigcirc &\bigcirc&\bigcirc&\cdots&\bigcirc & 1 0&\leftarrow  \\ 
\bigcirc &\bigcirc&\bigcirc&\cdots&\bigcirc & 0 1&\\
\vdots &\vdots & & &\vdots&\vdots\\
\bigcirc &\bigcirc&\bigcirc&\cdots&\bigcirc & 0 1&
\vspace{-3mm}
\end{array}\\
\underbrace{\ \ \ \ \ \ \ \ \ \ \ \ \ \ \ \ \ \ \ \ \ 
\ \ \ \ \ \ \ }\\
\ \ \ \ \ \ \ \ \ \ \ \ \ B
\end{array}
$$
\end{small}
\noindent where either a pair $1\, 0$ or $0\, 1$ must occur in the
places marked with a 
circle and such that the number of $1's$ in the last column is
greater than or equal to the number of $1's$ in any other column. Consider the
matrix $C$ obtained from $A^t$ by removing the rows whose penultimate
entry is equal to $1$ (these are marked above with an arrow) and
removing the last column. Let $K$ be the monomial ideal defined by the rows of
$C$, that is $K$ is obtained from $I$ by making $x_{2d-1}=0$ and
$x_{2d}=1$. By the choice
of the last column and because of Eq.~(\ref{jan20-04}) it is seen 
that $K$ has height two. Since $K$ is a minor of $I$ it has the
K\"onig property. Consequently the matrix $A^t$ has one of the
following two forms:
\begin{small}
$$
\begin{array}{cc}
\begin{array}{l}
\begin{array}{ccccccc}
1 0& 1 0& 1 0&\cdots& 1 0& 1 0& \\ 
0 1& 0 1& 0 1&\cdots& 0 1& 0 1&\\
\bigcirc &\bigcirc&\bigcirc&\cdots&\bigcirc & 1 0& \\
\vdots &\vdots & & &\vdots&\vdots&  \\ 
\bigcirc &\bigcirc&\bigcirc&\cdots&\bigcirc & 1 0&  \\ 
1 0&1 0&1 0&\cdots&1 0 & 0 1&\\
\bigcirc &\bigcirc&\bigcirc&\cdots&\bigcirc & 0 1&\\
\vdots &\vdots & & &\vdots&\vdots\\
\bigcirc &\bigcirc&\bigcirc&\cdots&\bigcirc & 0 1&
\vspace{-3mm}
\end{array}\\
\underbrace{\ \ \ \ \ \ \ \ \ \ \ \ \ \ \ \ \ \ \ \ \ 
\ \ \ \ \ \ \ }\\
\ \ \ \ \ \ \ \ \ \ \ \ \ B
\end{array}
& 
\begin{array}{l}
\begin{array}{ccccccl}
1 0& 1 0& 1 0&\cdots& 1 0& 1 0& \mbox{row}\ v_1 \\ 
0 1& 0 1& 0 1&\cdots& 0 1& 0 1& \mbox{row}\ v_2\\
\bigcirc &\bigcirc&\bigcirc&\cdots&\bigcirc & 1 0& \\
\vdots &\vdots & & &\vdots&\vdots&  \\ 
\bigcirc &\bigcirc&\bigcirc&\cdots&\bigcirc & 1 0&  \\ 
\bigcirc &\bigcirc&\bigcirc&\cdots&\bigcirc & 0 1& \mbox{row}\ v_j\\ 
\bigcirc &\bigcirc&\bigcirc&\cdots&\bigcirc & 0 1&\mbox{row}\ v_{j+1}\\ 
\vdots &\vdots & & &\vdots&\vdots\\
\bigcirc &\bigcirc&\bigcirc&\cdots&\bigcirc & 0 1&
\vspace{-3mm}
\end{array}\\
\underbrace{\ \ \ \ \ \ \ \ \ \ \ \ \ \ \ \ \ \ \ \ \ 
\ \ \ \ \ \ \ }\\
\ \ \ \ \ \ \ \ \ \ \ \ \ B
\end{array}
\end{array}
$$
\end{small}
\noindent where in the second case one has
$v_j+v_{j+1}=(1,1,\ldots,1,0,2)$. In the second case, using row
operations, we may replace $v_2$ by $v_j+v_{j+1}-v_2$. In the first
case by permuting rows $v_2$ and $v_j$ we may replace $v_2$ by
$(1,0,\ldots,1,0,0,1)$. Thus in both cases, using
row operations, we get that $A^t$ can be brought to the form: 
\begin{small}
$$
\begin{array}{l}
\begin{array}{ccccccc}
1 0& 1 0& 1 0&\cdots& 1 0& 1 0&  \\ 
1 0& 1 0& 1 0&\cdots& 1 0& 0 1&\\
\bigcirc &\bigcirc&\bigcirc&\cdots&\bigcirc & 1 0&  \\
\vdots &\vdots & & &\vdots&\vdots&  \\ 
\bigcirc &\bigcirc&\bigcirc&\cdots&\bigcirc & 1 0&   \\ 
\bigcirc &\bigcirc&\bigcirc&\cdots&\bigcirc & 0 1&\\
\vdots &\vdots & & &\vdots&\vdots\\
\bigcirc &\bigcirc&\bigcirc&\cdots&\bigcirc & 0 1&
\vspace{-3mm}
\end{array}\\
\underbrace{\ \ \ \ \ \ \ \ \ \ \ \ \ \ \ \ \ \ \ \ \ 
\ \ \ \ \ \ \ }\\
\ \ \ \ \ \ \ \ \ \ \ \ \ B_1
\end{array}
$$
\end{small}
\noindent where $B_1$ has rank $r-1$ and diagonalizes to an identity.
Therefore it is readily seen that this matrix is equivalent to 

\begin{small}
$$
\left[\begin{array}{ccccccr}
I_{r-1}&0 & 0 &\cdots& 0 & 0 & 0  \\ 
\mathbf{0}& 0 & 0&\cdots&  0& 1 &-1\\
\mathbf{0}&0 &0 &\cdots&0 &a_1&b_1  \\
\vdots &\vdots & & &\vdots&\vdots&\vdots  \\ 
\mathbf{0}&0 &0 &\cdots&0 &a_{s}&b_{s}
\end{array}\right]
$$
\end{small}

\noindent for some integers $a_1,b_1,\ldots,a_s,b_s$. Next 
for $1\leq i\leq s$ we multiply the second
row by $-a_i$ and add it to row $i+2$. Then we add the last two
columns. Using these row and column operations this matrix can be 
brought to the 
form:

\begin{small}
$$
\left[\begin{array}{ccccccr}
I_{r-1}&0 & 0 &\cdots& 0 & 0 & 0  \\ 
\mathbf{0}& 0 & 0&\cdots&  0& 1 &0\\
\mathbf{0}&0 &0 &\cdots&0 &0&c_1  \\
\vdots &\vdots & & &\vdots&\vdots&\vdots  \\ 
\mathbf{0}&0 &0 &\cdots&0 &0&c_{s} \\
\end{array}\right]
$$
\end{small}

\noindent for some integers $c_1,\ldots,c_s$. 
To finish the proof observe that $c_i=0$ for all $i$ 
because this matrix has rank $r$. Hence 
this matrix reduces to $[I_r,\mathbf{0}]$, i.e., $A^t$ is equivalent
to the identity matrix $[I_r,\mathbf{0}]$, 
as required. \end{proof}

Next we show that clutters with nice algebraic and combinatorial properties 
have nice combinatorial optimization properties.  

\begin{corollary}\label{may28-06} Let $\mathcal{C}$ be a 
$d$-uniform clutter with a perfect matching such that 
$\mathcal{C}$ has the packing property and $\alpha_0(\mathcal{C})=2$.
If $K[Ft]$ is normal, then $\mathcal C$ has the max-flow min-cut property.
\end{corollary}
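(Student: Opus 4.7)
The plan is to combine the previously established theorems into a short chain. All the ingredients are already in place, so the proof will essentially be an assembly argument.

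First I would extract the integrality of $Q(A)$ from the packing hypothesis: by Lehman's theorem (Theorem~\ref{lehman}), any clutter with the packing property has $Q(A)$ integral, so this condition is immediate and costs nothing.

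Next I would use Theorem~\ref{jan18-04} on the matrix
$$
B=\left(\hspace{-2mm}\begin{array}{c}A\\ \mathbf{1}\end{array}\hspace{-2mm}\right)
$$
whose columns are $(v_1,1),\ldots,(v_q,1)$. The hypotheses of Corollary~\ref{may28-06} are precisely the hypotheses of Theorem~\ref{jan18-04} ($d$-uniform, perfect matching, packing property, $\alpha_0(\mathcal{C})=2$), so $\Delta_r(B)=1$ where $r={\rm rank}(B)$. Feeding this into Proposition~\ref{two-char}(ii) yields $\overline{K[Ft]}=A(P)$.

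The final step uses the normality hypothesis: since $K[Ft]$ is normal, $K[Ft]=\overline{K[Ft]}=A(P)$. Combined with the integrality of $Q(A)$ obtained in the first step, Proposition~\ref{two-char}(i) (which applies because $\mathcal{C}$ is uniform) now gives that $\mathcal{C}$ satisfies the max-flow min-cut property, finishing the proof.

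There is no real obstacle here since Theorem~\ref{jan18-04} is the substantive result that does all the combinatorial and linear-algebraic heavy lifting; the corollary only needs to recognize that Theorem~\ref{jan18-04} together with Proposition~\ref{two-char}(ii) identifies $A(P)$ with $\overline{K[Ft]}$, after which the assumed normality and Lehman's theorem combine to give both conditions required by Proposition~\ref{two-char}(i).
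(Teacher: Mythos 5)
Your proof is correct and follows essentially the same route as the paper: Lehman gives integrality of $Q(A)$, Theorem~\ref{jan18-04} gives $\Delta_r(B)=1$, Proposition~\ref{two-char}(ii) turns this into $\overline{K[Ft]}=A(P)$, normality collapses the integral closure, and Proposition~\ref{two-char}(i) concludes. The only small point you pass over silently is that Theorem~\ref{jan18-04} states $\Delta_r(B)=1$ with $r={\rm rank}(A)$, and one needs the uniformity of $\mathcal{C}$ to identify ${\rm rank}(A)$ with ${\rm rank}(B)$ before invoking Proposition~\ref{two-char}(ii); the paper records this observation explicitly.
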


\begin{proof} Let $B$ be the matrix with column vectors
$(v_1,1),\ldots,(v_q,1)$. By Theorem~\ref{jan18-04} we have
$\Delta_r(B)=1$, where $r={\rm rank}(A)$. Notice that $r={\rm
rank}(B)$ because $\mathcal{C}$ is uniform. According
to Proposition~\ref{two-char}(ii) the condition $\Delta_r(B)=1$ is
equivalent to the 
equality
$\overline{K[Ft]}=A(P)$. Thus by the normality of $K[Ft]$ we get 
$$
K[Ft]=\overline{K[Ft]}=A(P).
$$
By the Lehman theorem the polyhedron $Q(A)$ is
integral (see Theorem~\ref{lehman}). 
Hence by Proposition \ref{two-char}(i) 
we get that $\mathcal C$ satisfies the max-flow min-cut property. 
\end{proof}  

The next result gives some support to Conjecture~\ref{conforti-cornuejols1}. 

\begin{corollary}\label{cc-conj-supp} Let $\mathcal{C}$ be a 
$d$-uniform clutter with a perfect matching such that 
$\mathcal{C}$ has the packing property and $\alpha_0(\mathcal{C})=2$.
If $v_1,\ldots,v_q$ are linearly independent, then $\mathcal C$ has the
max-flow 
min-cut property.
\end{corollary}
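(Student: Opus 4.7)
The plan is to reduce the statement to Corollary~\ref{may28-06} by showing that, under the linear independence hypothesis, the homogeneous monomial subring $K[Ft]=K[x^{v_1}t,\ldots,x^{v_q}t]$ is automatically normal.

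First, I would observe that the linear independence of $v_1,\ldots,v_q$ in $\mathbb{Z}^n$ immediately implies the linear independence of the lifted vectors $(v_1,1),\ldots,(v_q,1)$ in $\mathbb{Z}^{n+1}$, since projecting onto the first $n$ coordinates recovers the $v_i$'s. Consider the presentation map
\[
\varphi\colon K[y_1,\ldots,y_q]\longrightarrow K[Ft],\qquad y_i\longmapsto x^{v_i}t.
\]
Its kernel is the toric ideal of $K[Ft]$, which is generated by binomials $y^{\alpha}-y^{\beta}$ for which $\sum_i\alpha_i(v_i,1)=\sum_i\beta_i(v_i,1)$. By the linear independence of $(v_1,1),\ldots,(v_q,1)$ the only such relations are the trivial ones $\alpha=\beta$, so $\ker(\varphi)=(0)$. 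Hence $K[Ft]$ is isomorphic to a polynomial ring in $q$ variables, and in particular it is a normal domain.

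With the normality of $K[Ft]$ in hand, the hypotheses of Corollary~\ref{may28-06} are met: $\mathcal C$ is $d$-uniform, has a perfect matching, satisfies the packing property, and $\alpha_0(\mathcal C)=2$. Applying that corollary yields directly that $\mathcal C$ has the max-flow min-cut property, which completes the proof.

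The only substantive point is the translation of ``linear independence of the $v_i$'s'' into normality of $K[Ft]$; every other ingredient is an immediate consequence of Corollary~\ref{may28-06} together with the Lehman theorem (Theorem~\ref{lehman}) invoked inside it. There is no real obstacle here, since once the toric ideal is seen to be zero the ring is a polynomial ring and hence trivially normal.
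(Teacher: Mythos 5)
Your proof is correct and takes essentially the same route as the paper: the paper's proof of this corollary is simply ``It follows at once from Corollary~\ref{may28-06} because $K[Ft]$ is normal,'' leaving it to the reader to see why linear independence of $v_1,\ldots,v_q$ forces normality of $K[Ft]$. Your argument --- that linear independence of the $v_i$ lifts to linear independence of the $(v_i,1)$, so the toric ideal of $K[Ft]$ vanishes and the ring is a polynomial ring, hence normal --- fills in exactly that implicit step and is correct.
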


\begin{proof} It follows at once from Corollary~\ref{may28-06} because
$K[Ft]$ is normal. \end{proof}   

A $d$-uniform clutter $\mathcal{C}$ is called $2$-{\it partitionable} 
if its vertex set $X$ has a partition $X_1,\ldots,X_d$ such that 
$X_i$ is a minimal vertex cover of $\mathcal{C}$ and $|X_i|=2$ for
$i=1,\ldots,d$. For instance the clutters of Theorem~\ref{jan18-04}
are $2$-partitionable. Another instance is the clutter of minimal
vertex covers of an unmixed bipartite graph (see
Corollary~\ref{may17-09}). 
A clutter is called {\it unmixed}
if all its minimal vertex covers have the same size. A specific
illustration of a famous  
$2$-partitionable clutter is given in Example~\ref{q6}.  
Our next result is about this family of clutters. 

Recall that an edge ideal $I\subset R$ is called {\it normal} if
$\overline{I^i}=I^i$ for  
$i\geq 1$, where
$$
\overline{I^i}=(\{x^a\in R\vert\, \exists\, p\geq 1;(x^a)^{p}\in
I^{pi}\})\subset R
$$ 
is the integral closure of $I^i$. Also recall that 
$I$ is called {\it minimally non-normal\/} if $I$ is not normal
and all its proper minors are normal. An edge ideal $I$ is normal if and
only if its Rees algebra $R[It]$ is normal (see
\cite[Theorem~3.3.18]{monalg}). 

The other main result of this section is:

\begin{theorem}\label{another-mt} Let $\mathcal{C}$ be a $d$-uniform
clutter with a 
partition $X_1,\ldots,X_d$ of $X$ such that $X_i=\{x_{2i-1},x_{2i}\}$
is a minimal vertex
cover of $\mathcal{C}$ for all $i$. Then
\begin{enumerate}
\item[(a)] ${\rm rank}(A)\leq d+1$.
\item[(b)] If $C$ is a minimal vertex cover of $\mathcal{C}$, 
then $2\leq|C|\leq d$. 
\item[(c)] If $\mathcal C$ satisfies the K\"onig property and there is
a minimal vertex
cover $C$ of $\mathcal C$ with $|C|=d\geq 3$, then ${\rm
rank}(A)=d+1$.
\item[(d)] If $I=I(\mathcal{C})$ is minimally non-normal and
$\mathcal{C}$ satisfies the packing property, then ${\rm rank}(A)=d+1$.
\end{enumerate}
\end{theorem}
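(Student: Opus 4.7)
The plan is to handle parts (a)--(c) by direct combinatorial and linear-algebraic arguments, and to reserve (d) for a contradiction argument that combines Theorem~\ref{jan18-04} with the minimally non-normal hypothesis. The key structural input, used in every part, is that each $X_i$ being a vertex cover, together with $|f|=d$, $|X_i|=2$, and $X=X_1\cup\cdots\cup X_d$, forces $|f\cap X_i|=1$ for every edge $f$ and every index $i$. Written in terms of the rows $r_1,\ldots,r_{2d}$ of $A$, this reads $r_{2i-1}+r_{2i}=\mathbf{1}$ for $i=1,\ldots,d$, so the row space of $A$ is contained in $\mathrm{span}(r_1,r_3,\ldots,r_{2d-1},\mathbf{1})$ of dimension at most $d+1$, proving (a).

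For (b), I would first observe $\alpha_0(\mathcal{C})=2$: the upper bound is clear from $|X_i|=2$, and $\alpha_0=1$ would, via the intersection property just derived, force one vertex of some $X_j$ to lie in no edge, contradicting the minimality of $X_j$. Hence $|C|\geq 2$. For $|C|\leq d$, suppose $|C|\geq d+1$; then $|X\setminus C|\leq d-1$, so pigeonhole on the $d$ pairs $X_i$ gives $X_j\subseteq C$ for some $j$. Since $X_j$ already covers every edge and $C\setminus X_j\neq\emptyset$, any vertex of $C\setminus X_j$ can be removed from $C$ while preserving the cover, contradicting minimality.

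For (c), König gives disjoint edges $f_1,f_2$ with $f_1\cup f_2=X$, and for a minimal vertex cover $C$ with $|C|=d\geq 3$ the argument of (b) forbids $X_j\subseteq C$, so $|C\cap X_i|\leq 1$ and, by summation, $=1$ for every $i$. After relabeling take $C=\{x_1,x_3,\ldots,x_{2d-1}\}$. Minimality of $C$ yields, for each $i$, a unique edge $e_i$ with $e_i\cap X_i=\{x_{2i-1}\}$ and $e_i\cap X_k=\{x_{2k}\}$ for $k\neq i$. The $2d\times d$ matrix $M$ with columns $v_{e_1},\ldots,v_{e_d}$ has $I_d$ as its top $d$ rows and $J_d-I_d$ as its bottom $d$ rows, hence rank $d$; a direct parametrization shows that a $(0,1)$-vector $w$ with $w_{2k-1}+w_{2k}=1$ lies in the column span of $M$ if and only if $w=v_{e_k}$ for some $k$ (one needs $\sum\lambda_i=1$ with each $\lambda_i\in\{0,1\}$). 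If $v_{f_1}$ lies in the span, then $f_1=e_k$ and so $|f_1\cap C|=1$, forcing $|f_2\cap C|=d-1\geq 2$; but every $e_j$ satisfies $|e_j\cap C|=1$, so $v_{f_2}$ cannot lie in the span. Adjoining whichever of $v_{f_1},v_{f_2}$ lies outside the span yields $d+1$ linearly independent columns, so $\mathrm{rank}(A)=d+1$.

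For (d), packing yields König (hence a perfect matching $f_1,f_2$) and Lehman's theorem gives $Q(A)$ integral, so Theorem~\ref{jan18-04} applies and yields $\Delta_r(B)=1$, equivalently $\overline{K[Ft]}=A(P)$ by Proposition~\ref{two-char}(ii). Since $I$ is not normal we have $K[Ft]\subsetneq A(P)$, and Proposition~\ref{two-char}(i) shows $\mathcal{C}$ fails MFMC, while every proper minor of $\mathcal{C}$ is normal and, by packing together with Theorem~\ref{noclu1}, satisfies MFMC. Assuming $\mathrm{rank}(A)\leq d$ for contradiction, the analysis of the first paragraph supplies a nontrivial integer relation $\sum_i c_i u_i=c\mathbf{1}$ with $u_i=r_{2i-1}$; equivalently, every edge in its $\{0,1\}^d$-encoding lies on a common affine hyperplane $\langle\alpha,x\rangle=\beta$. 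The subtle heart of the argument---and the principal obstacle---is to convert this affine relation into a concrete proper minor $\mathcal{C}'$ of $\mathcal{C}$ that is not normal, contradicting minimal non-normality. I would approach this by case analysis on the support of $\alpha$ (no coordinate can be isolated, else $\alpha_0=1$) and, using the perfect matching $f_1,f_2$ together with the deletion and contraction minor operations, produce a minor that inherits a witness for a non-trivial hole $\overline{I^k}\supsetneq I^k$.
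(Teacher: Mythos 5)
Parts (a) and (b) match the paper's proof. Part (c) is correct but takes a different, more hands-on linear-algebra route: you build the $2d\times d$ matrix from the $d$ distinguished edges $e_i$ and argue that at most one of the two matching edges $v_{f_1},v_{f_2}$ can lie in its column span, whereas the paper shows directly that $v_1,\ldots,v_d,\mathbf 1$ are linearly independent by computing $v_i=e_{2i-1}-e_{2i}+\sum_j e_{2j}$ and extracting the coefficients. Both arguments are valid and of comparable length; yours makes explicit which $(0,1)$-vectors lie in the span, which is a pleasant extra piece of information.

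Part (d), however, has a genuine gap, which you yourself flag. Your strategy is a proof by contradiction: assume $\mathrm{rank}(A)\leq d$, extract an integer relation among the rows, and then try to convert that relation into a proper minor of $\mathcal{C}$ that is non-normal, violating minimal non-normality. You do not carry out the conversion, and it is not clear that it can be made to work: an affine relation among the column vectors does not by itself localize a witness of non-normality to a proper minor. The paper proceeds differently and completes the argument. It does not argue by contradiction on the rank. Instead it takes a specific element $x^\alpha\in\overline{I^b}\setminus I^b$, forms the module $N=\overline{I^b}/I^b$, and uses the minimally non-normal hypothesis to deduce that $\mathfrak m=(x_1,\ldots,x_n)$ is the only associated prime of $N$, hence $\mathfrak m^r\subset\mathrm{ann}(N)$. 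Writing out $x_i^r x^\alpha\in I^b$ and $x_{i+1}^{r}x^\alpha\in I^b$ for each odd $i$, specializing $x_j=1$ for $j\notin\{i,i+1\}$, and manipulating the two equations produces a monomial identity that, read in $\mathbb Z^n$, gives $(s_2+w_1)(e_i-e_{i+1})\in\mathbb Z\mathcal A$ with $s_2+w_1>0$. Since $\mathbb Z^n/\mathbb Z\mathcal A$ is torsion-free --- this is where Theorem~\ref{jan18-04} (applicable because packing implies K\"onig, which together with $n=2d$ yields a perfect matching) enters, via $\Delta_r(A)=1$ --- one gets $e_i-e_{i+1}\in\mathbb Z\mathcal A$ for every odd $i$. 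Together with $\mathbf 1\in\mathbb Z\mathcal A$ (from the perfect matching), this exhibits $d+1$ independent vectors in the column space, so $\mathrm{rank}(A)\geq d+1$, and (a) finishes it. If you want to repair your argument you should abandon the search for a non-normal proper minor and instead prove $e_i-e_{i+1}\in\mathbb Z\mathcal A$ directly along these lines; the annihilator step is the key idea your sketch is missing.
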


\begin{proof}  (a) For each odd integer $k$ the sum of rows $k$ and
$k+1$ of the matrix 
$A$ is equal to ${\mathbf 1}=(1,\ldots,1)$. Thus the rank of $A$ is
bounded by $d+1$.

(b) By the pigeon hole principle, any minimal vertex cover $C$ of the
clutter $\mathcal C$ 
satisfies $2\leq|C|\leq d$. 

(c) First notice that $C$ contains exactly one element of each $X_j$
because $X_j\not\subset C$ and $|C|=d$. Thus we may assume
$$
C=\{x_1,x_3,\ldots,x_{2d-1}\}.
$$
Consider the monomial $x^\alpha=x_2x_4\cdots x_{2d}$ 
and notice that $x_kx^\alpha\in I$ for each $x_k\in C$ because the
monomial $x_kx^\alpha$ is clearly in every minimal prime of $I$.
Writing $x_k=x_{2i-1}$ with $1\leq i\leq d$ we conclude that the
monomial
\begin{equation}\label{may15-09}
x^{\alpha_i}=\frac{x_{2i-1}x^\alpha}{x_{2i}}=
\frac{x_{2i-1}(x_2x_4\cdots x_{2d})}{x_{2i}}
\end{equation}
is a minimal generator of $I$. Thus we may assume
$x^{\alpha_i}=x^{v_i}$ for $i=1,\ldots,d$. The vector $\mathbf 1$
belongs to the linear space generated by $v_1,\ldots,v_q$ because
$\mathcal C$ has the K\"onig property, where $q$ is the number of
edges of $\mathcal{C}$. By part (a) the rank of $A$ is at most $d+1$. 
Thus it suffices to prove that $v_1,\ldots,v_d,\mathbf{1}$ 
are linearly independent. Assume that 
\begin{equation}\label{may15-09-1}
\lambda_1v_1+\cdots+\lambda_dv_d+\lambda_{d+1}\mathbf{1}=0
\end{equation}
for some scalars $\lambda_1,\ldots,\lambda_{d+1}$. From
Eq.~(\ref{may15-09}) we have 
\begin{equation}\label{may15-09-2}
v_i=e_{2i-1}-e_{2i}+\sum_{j=1}^de_{2j}\ \ \ \ \ 
\end{equation}
for $i=1,\ldots,d$. Hence using Eq.~(\ref{may15-09-1}) 
we conclude that 
$$
\sum_{i=1}^d\lambda_ie_{2i-1}+\lambda_{d+1}\sum_{i=1}^de_{2i-1}=
\sum_{i=1}^d(\lambda_i+\lambda_{d+1})e_{2i-1}=0.
$$
Hence $\lambda_{d+1}=-\lambda_i$ for $i=1,\ldots,d$. Using 
Eq.~(\ref{may15-09-1}) once more we get
$$
\lambda_{d+1}(v_1+\cdots+v_d-\mathbf{1})=0.
 $$
As $v_1+\cdots+v_d-\mathbf{1}$ cannot be zero by Eq.~(\ref{may15-09-2}),
we obtain that $\lambda_{d+1}=0$. Consequently $\lambda_i=0$ for all
$i$, as required.

(d) Let $x^\alpha t^b$ be a minimal generator of 
$\overline{R[It]}$ not in $R[It]$ and let $m=x^\alpha$. Then
$m\in\overline{I^b}\setminus I^b$. Using 
\cite[Proposition~4.3]{reesclu}, one has $\deg(m)=bd$. Consider the $R$-module
$N=\overline{I^b}/I^b$. Notice that the image of $m$ in $N$ is
nonzero. The set of associated primes of $N$, denoted
by ${\rm Ass}(N)$, is the set of prime ideals $\mathfrak{p}$ of
$R$ such that $N$ contains a submodule isomorphic to
$R/\mathfrak{p}$. By the hypothesis that $I$ is minimally non-normal,
we have that $\mathfrak{m}=(x_1,\ldots,x_n)$ is the only associated
prime of $N$. Hence, using 
\cite[Theorem 9, p.~50]{Mat}, we have the following expression for the
radical of the annihilator of $N$
$$
{\rm rad}({\rm ann}(N))=\bigcap_{\mathfrak{p}\in{\rm Ass}(N)}
\mathfrak{p}=\mathfrak{m}=(x_1,\ldots,x_n).
$$
Consequently $\mathfrak{m}^r\subset{\rm ann}(N)=\{x\in R\vert xN=0\}$ 
for some $r>0$. Thus for $i$ odd we can
write 
$$
x_i^rx^\alpha=(x^{v_1})^{a_1}\cdots(x^{v_q})^{a_q}x^\delta,
$$
where $a_1+\cdots+a_q=b$ and $\deg(x^\delta)=r$. If we write 
$x^\delta=x_i^{s_1}x_{i+1}^{s_2}x^\gamma$ with $x_i,x_{i+1}$ not in
the support of $x^{\gamma}$, making $x_j=1$ for $j\notin\{i,i+1\}$, 
it is not hard to see that $r=s_1+s_2$
and $\gamma=0$. Thus we get an equation:
$$
x_i^{s_2}x^\alpha=(x^{v_1})^{a_1}\cdots(x^{v_q})^{a_q}x_{i+1}^{s_2}
$$
with $s_2>0$. Using a similar argument we obtain an equation:
$$
x_{i+1}^{w_1}x^\alpha=(x^{v_1})^{b_1}\cdots(x^{v_q})^{b_q}x_{i}^{w_1}
$$
with $w_1>0$. Therefore 
$$
x_{i+1}^{s_2+w_1}(x^{v_1})^{a_1}\cdots(x^{v_q})^{a_q}
=x_{i}^{s_2+w_1}(x^{v_1})^{b_1}\cdots(x^{v_q})^{b_q}.
$$
Consider the group $\mathbb{Z}^n/\mathbb{Z}{\mathcal A}$, where 
${\mathcal A}=\{v_1,\ldots,v_q\}$.  Since this group is torsion-free, we 
get $e_i-e_{i+1}\in \mathbb{Z}{\mathcal A}$ for $i$ odd. Finally 
to conclude that ${\rm rank(A)}=d+1$ notice 
that $\mathbf{1}\in\mathbb{Z}{\mathcal A}$. \end{proof}  

\begin{example}{\rm \cite[p.~12]{cornu-book}}\label{q6}\rm\ Let
$\mathcal{C}$ be the uniform clutter whose edges are
$$
\{x_1,x_4,x_5\},\{x_1,x_3,x_6\},\{x_2,x_4,x_6\},\{x_2,x_3,x_5\}
$$ 
and let $A$ be its incidence matrix.  Using {\sc Normaliz} \cite{normaliz2}, it 
is not hard to see that the minimal vertex covers of $\mathcal{C}$ are
$$
\begin{array}{llll}
X_1=\{x_1,x_2\},&X_2=\{x_3,x_4\},&X_3=\{x_5,x_6\},&
\\
C_4=\{x_1,x_4,x_5\},&C_5=\{x_1,x_3,x_6\},&
C_6=\{x_2,x_4,x_6\},&C_7=\{x_2,x_3,x_5\}.
\end{array}
$$
This clutter satisfies the hypotheses of
Theorem~\ref{another-mt} with $d=3$ and has minimal vertex covers of sizes $2$
and $3$. Moreover the rank of $A$ is $4$, $\mathcal{C}$ does not
satisfy the K\"onig property and $Q(A)$ is integral.  
\end{example}

For use below recall that a graph $G$ is {\it Cohen-Macaulay} if $R/I(G)$ is
a Cohen-Macaulay ring. Any Cohen-Macaulay graph is unmixed
\cite[p.~169]{monalg}.  

\begin{corollary}\label{may17-09} Let $A$ be the incidence matrix of the 
clutter $\mathcal{C}$ of minimal vertex covers of an unmixed bipartite graph
$G$. Then {\rm (i)} $\mathcal{C}$ is $2$-partitionable. 
{\rm (ii)} ${\rm rank}(A)\leq \alpha_0(G)+1$. {\rm (iii)} ${\rm
rank}(A)=\alpha_0(G)+1$ if $G$ is Cohen-Macaulay.
\end{corollary}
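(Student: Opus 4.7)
Set $d=\alpha_0(G)$. An unmixed bipartite graph $G$ without isolated vertices has bipartition $V_1\sqcup V_2$ with $|V_1|=|V_2|=d$, and both $V_1,V_2$ are minimal vertex covers of $G$, so $\mathcal{C}$ is $d$-uniform. Every edge $\{u,v\}$ of $G$ meets every minimal vertex cover of $G$ (i.e., every edge of $\mathcal{C}$), and since the opposite bipartition class misses any given vertex, no singleton is a vertex cover of $\mathcal{C}$; hence $\{u,v\}$ is a minimal vertex cover of $\mathcal{C}$ of size two. K\"onig's theorem applied to $G$ gives a matching of size $d$, which is perfect since $|V(G)|=2d$; taking the $X_i$'s to be the matching edges yields the required $2$-partition, proving (i). Part (ii) is then immediate from Theorem~\ref{another-mt}(a).

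The main obstacle is (iii). The obvious route via Theorem~\ref{another-mt}(c) is closed once $d\ge 3$, because every minimal vertex cover of $\mathcal{C}$ has size $2$ (never $d$). Instead I plan to exhibit $d+1$ columns of $A$, i.e., $d+1$ minimal vertex covers of $G$, whose incidence vectors are linearly independent. The Herzog--Hibi characterization of Cohen--Macaulay bipartite graphs lets me relabel $V_1=\{x_1,\ldots,x_d\}$ and $V_2=\{y_1,\ldots,y_d\}$ so that every $\{x_i,y_i\}$ is an edge of $G$ and every edge $\{x_j,y_k\}$ of $G$ satisfies $j\le k$. For $0\le i\le d$ set
$$
C_i=\{x_1,\ldots,x_i\}\cup\{y_{i+1},\ldots,y_d\}.
$$
A short check using the ordering shows each $C_i$ is a vertex cover (for any edge $\{x_j,y_k\}$ with $j\le k$, either $j\le i$ so $x_j\in C_i$, or $j>i$ and then $k\ge j>i$ so $y_k\in C_i$), and it is minimal because the matching edge $\{x_j,y_j\}$ certifies the necessity of $x_j\in C_i$ for $j\le i$, and $\{x_k,y_k\}$ certifies the necessity of $y_k\in C_i$ for $k>i$.

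Linear independence of the incidence vectors $v_{C_0},v_{C_1},\ldots,v_{C_d}$ then follows from the telescoping identity $v_{C_i}-v_{C_{i-1}}=e_{x_i}-e_{y_i}$ for $1\le i\le d$, combined with $v_{C_0}=\sum_{j=1}^{d}e_{y_j}$: any relation $\alpha\, v_{C_0}+\sum_{i=1}^{d}\beta_i(e_{x_i}-e_{y_i})=0$ forces each $\beta_i=0$ by reading the coefficient of $e_{x_i}$, and then $\alpha=0$ from the coefficient of any $e_{y_j}$. Hence ${\rm rank}(A)\ge d+1$, and together with (ii) this yields ${\rm rank}(A)=d+1$, completing (iii).
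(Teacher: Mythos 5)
Your proof is correct, and it diverges from the paper's in two places worth noting. For part (i), the paper applies its own structural Proposition~\ref{nov20-03} to the clutter $\mathcal{C}$ after observing that $Q(A)$ is integral (since $G$ is bipartite and integrality passes to blockers) and that the minimal vertex covers of $\mathcal{C}$ are the edges of $G$; you instead build the $2$-partition by hand from a perfect matching of $G$ supplied by K\"onig's theorem, after verifying directly that each edge of $G$ is a minimal vertex cover of $\mathcal{C}$. Both routes are sound, and the conclusions are identical. For part (iii), the paper also reduces to exhibiting $d+1$ linearly independent characteristic vectors of minimal vertex covers of $G$, but then compresses the argument into one line, ``by induction on the number of vertices and using the fact that any Cohen-Macaulay bipartite graph has at least one vertex of degree $1$''; you instead invoke the Herzog--Hibi ordering of a Cohen-Macaulay bipartite graph and explicitly write down the $d+1$ covers $C_i=\{x_1,\dots,x_i\}\cup\{y_{i+1},\dots,y_d\}$, verifying minimality via the matching edges and linear independence via the telescoping differences $v_{C_i}-v_{C_{i-1}}=e_{x_i}-e_{y_i}$. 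This is more explicit and self-contained than the paper's sketch (though morally close, since the Herzog--Hibi ordering can itself be obtained by iteratively peeling degree-one vertices, and indeed $x_d$ and $y_1$ have degree one in that ordering). Your observation that Theorem~\ref{another-mt}(c) is unavailable once $d\geq 3$, because every minimal vertex cover of $\mathcal{C}$ has size $2$, is correct and a useful sanity check.
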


\begin{proof} We set $d=\alpha_0(G)$. Clearly $\mathcal{C}$ is a
$d$-uniform clutter because all minimal vertex covers of $G$ 
have size $d$. By \cite[Theorem~78.1]{Schr2} the polyhedron $Q(A)$ is
integral because $G$ is a bipartite graph and bipartite graphs have
integral set covering polyhedron \cite[Proposition~4.27]{reesclu}. 
Any minimal vertex cover of $\mathcal{C}$ is of the
form $\{x_i,x_j\}$ for some edge $\{x_i,x_j\}$ of $G$. Therefore by
Proposition~\ref{nov20-03} the clutter $\mathcal{C}$ is
$2$-partitionable. Thus we have shown (i). Then using part (a) of
Theorem~\ref{another-mt} we obtain (ii). To prove (iii) 
it suffices to prove that there are minimal vertex 
covers $C_1,\ldots,C_{d+1}$ of $G$ whose characteristic vectors are linearly
independent. This follows by induction on the number of vertices and
using the fact that any Cohen-Macaulay bipartite graph has at least
one vertex of degree $1$ \cite[Theorem~6.4.4]{monalg}. 
\end{proof}

\section{Triangulations and the max-flow min-cut
property}\label{section-trian-balanced}

Let $R=K[x_1,\ldots,x_n]$ be a polynomial ring over a 
field $K$ and let $\mathcal{C}$ be a $d$-uniform clutter with vertex set
$X=\{x_1,\ldots,x_n\}$ and edge ideal $I=I(\mathcal{C})$. In what
follows $F=\{x^{v_1},\ldots,x^{v_q}\}$ will denote the minimal set of
generators of $I$ and ${\mathcal A}$ will denote the set 
$\{v_1,\ldots,v_q\}$. The incidence matrix of $\mathcal{C}$, i.e.,
the $n\times q$ matrix with column vectors $v_1,\ldots,v_q$ will be
denoted by $A$. One can think of the columns of $A$ as points in
$\mathbb{R}^n$. The set ${\mathcal A}$ 
is called a {\it point configuration\/}.

Consider a homomorphism of $K$-algebras:
$$
S=K[t_1,\ldots,t_q]\stackrel{\varphi}{\longrightarrow}K[F]\ \ \ \ 
(t_i\stackrel{\varphi}{\longrightarrow}x^{v_i}),
$$
where $S$ is a polynomial ring. The kernel of $\varphi$, 
denoted by $P$, is called the {\it toric ideal\/} of $K[F]$. 
For the 
rest of this section we assume that $\prec$ is a fixed term 
order for the set of monomials of $S$. We denote the initial ideal of $P$
by ${\rm in}_\prec(P)$. If the choice of a 
term order is clear, we may write ${\rm in}(P)$ instead of 
${\rm in}_\prec(P)$. There is a one to one 
correspondence between simplicial
complexes with vertex set $X$ and squarefree monomial ideals of $R$
via the Stanley-Reisner theory \cite{Sta2}:
$$
\Delta\longmapsto I_\Delta=(x^a\vert\,\, x^a\mbox{ is squarefree and }
{\rm supp}(x^a)\notin\Delta).
$$
Notice that ${\rm rad}({\rm in }(P))$, the radical of ${\rm in }(P)$,
is a squarefree monomial ideal. Let $\Delta$ be the simplicial
complex 
whose Stanley-Reisner 
ideal is ${\rm rad}({\rm in }(P))$. 

Let
$\omega=(\omega_i) \in\mathbb{N}^q$ be an integral weight vector. If
$$
f=f(t_1,\ldots,t_q)=\lambda_1t^{a_1}+\cdots+\lambda_st^{a_s}
$$ 
is a polynomial with $\lambda_1,\ldots,\lambda_s$ in $K$, 
we define ${\rm in}_\omega(f)$, 
the {\it initial form\/} of $f$ 
relative to $\omega$, as the sum 
of all terms $\lambda_it^{a_i}$ such that 
$\langle \omega,a_i\rangle$ is maximal. 
The ideal generated by all initial forms 
is denoted by ${\rm in}_\omega(P)$.  

\begin{proposition}{\rm
\cite[Proposition~1.11]{Stur1}}\label{jun4-02} For every
term order $\prec$, ${\rm in}_\prec(P)={\rm
in}_\omega(P)$ for 
some non-negative integer weight vector $\omega\in\mathbb{N}^q$.
\end{proposition}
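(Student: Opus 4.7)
The plan is to prove this by producing a weight vector $\omega$ that reproduces the $\prec$-initial forms on a reduced Gr\"obner basis of $P$, and then to show that the resulting initial ideals coincide. So first I would fix a reduced Gr\"obner basis $G=\{g_1,\ldots,g_s\}$ of $P$ with respect to $\prec$, writing each generator as $g_i=t^{a_i}-\sum_{j=1}^{n_i}\lambda_{ij}t^{b_{ij}}$ with $t^{a_i}={\rm in}_\prec(g_i)$ and $t^{a_i}\succ t^{b_{ij}}$ for every $j$. What I need is a single $\omega\in\mathbb{N}^q$ such that
\begin{equation}\label{key-ineq}
\langle\omega,a_i-b_{ij}\rangle>0\qquad (1\le i\le s,\ 1\le j\le n_i),
\end{equation}
because then by construction ${\rm in}_\omega(g_i)=t^{a_i}={\rm in}_\prec(g_i)$ for each $i$.

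Next I would establish that such an $\omega$ exists. The conditions in \eqref{key-ineq} form a finite system of strict homogeneous linear inequalities on $\omega\in\mathbb{R}^q$, and the set of solutions is an open polyhedral cone. To see this cone is nonempty, I would invoke the standard fact (essentially Robbiano's characterization of term orders) that any term order can be represented, on any finite set of monomial pairs, by a real weight vector: concretely, the vectors $a_i-b_{ij}$ all lie in the open half-space $\{u:u\succ_\prec 0\}$ after identifying $\prec$ with a lexicographic combination of real weights, so a generic positive combination of the defining weight vectors of $\prec$ satisfies \eqref{key-ineq}. Having found a real $\omega$, I can perturb slightly to a rational solution, clear denominators, and add a large enough multiple of $(1,\ldots,1)$ to land in $\mathbb{N}^q$; this last step does not affect the differences $\langle\omega,a_i-b_{ij}\rangle$ only if these differences are unchanged by a constant shift, which holds precisely when $|a_i|=|b_{ij}|$. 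In general one first shifts by a large constant and then uses that adding $c\mathbf{1}$ changes $\langle\omega,a_i-b_{ij}\rangle$ by $c(|a_i|-|b_{ij}|)$, so one chooses $c$ large enough after clearing denominators so that all inequalities remain strict.

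Once $\omega\in\mathbb{N}^q$ is in hand I would verify ${\rm in}_\omega(P)={\rm in}_\prec(P)$ by double containment. The inclusion ${\rm in}_\prec(P)\subset{\rm in}_\omega(P)$ is immediate because ${\rm in}_\prec(P)$ is generated by the monomials ${\rm in}_\prec(g_i)={\rm in}_\omega(g_i)\in{\rm in}_\omega(P)$. For the reverse inclusion, I would introduce the refined order $\prec_\omega$ that compares monomials first by $\omega$-weight and then breaks ties by $\prec$. Then $\prec_\omega$ is a term order, and for each $g_i$ we have ${\rm in}_{\prec_\omega}(g_i)={\rm in}_\prec(g_i)$ by construction of $\omega$. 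Since $G$ was a Gr\"obner basis for $\prec$ and its leading terms are unchanged under $\prec_\omega$, the standard Buchberger criterion (the $S$-pairs reduce to zero) carries over, so $G$ is also a Gr\"obner basis with respect to $\prec_\omega$. Finally, for any $f\in P$ one has ${\rm in}_\omega(f)\in{\rm in}_{\prec_\omega}(P)=({\rm in}_{\prec_\omega}(g_1),\ldots,{\rm in}_{\prec_\omega}(g_s))=({\rm in}_\prec(g_1),\ldots,{\rm in}_\prec(g_s))={\rm in}_\prec(P)$, proving ${\rm in}_\omega(P)\subset{\rm in}_\prec(P)$.

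The main obstacle is the existence step for $\omega$: one must know that every term order is captured, on any prescribed finite collection of monomial comparisons, by a real weight vector. Everything else — rationality, integrality, non-negativity, and the transfer of the Gr\"obner basis property from $\prec$ to $\prec_\omega$ — is routine bookkeeping once that separation is in place.
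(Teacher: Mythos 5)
The paper states this result as a citation to \cite[Proposition~1.11]{Stur1} and provides no proof, so there is no in-paper argument to compare against; I will assess your attempt against Sturmfels' original. Your outline --- fix the reduced Gr\"obner basis $G$, find $\omega$ separating each leading exponent $a_i$ from the trailing exponents $b_{ij}$, transfer the Gr\"obner basis property to the refined order $\prec_\omega$, and deduce the two containments --- is exactly Sturmfels' plan. The double containment at the end is slightly loose: to get ${\rm in}_\omega(P)\subset{\rm in}_\prec(P)$ one needs that the $\omega$-initial forms of the $g_i$ generate the ideal ${\rm in}_\omega(P)$, or else the standard ``a containment of ideals with equal $\prec$-initial ideals is an equality'' lemma; your sentence ending ``${\rm in}_\omega(f)\in{\rm in}_{\prec_\omega}(P)$'' elides the fact that ${\rm in}_\omega(f)$ is a polynomial while ${\rm in}_{\prec_\omega}(P)$ is a monomial ideal. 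That is routine bookkeeping, though.

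The genuine gap is the nonnegativity of $\omega$. You obtain a real separating $\omega$ from Robbiano's matrix representation of $\prec$, rationalize, clear denominators, and then propose to add a large multiple $c\mathbf{1}$ to land in $\mathbb{N}^q$, claiming one can choose $c$ large enough that all inequalities remain strict. This fails whenever $|a_i|<|b_{ij}|$ for even one pair --- which happens exactly when $\prec$ is not degree-compatible (e.g.\ lex with $t_1\succ t_2^2$, so $a=(1,0)$, $b=(0,2)$): then $\langle\omega+c\mathbf{1},a_i-b_{ij}\rangle=\langle\omega,a_i-b_{ij}\rangle+c(|a_i|-|b_{ij}|)\to-\infty$ as $c\to\infty$. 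The correct route is an alternative-theorem argument that actually invokes the term-order axioms. If the system $\omega\geq 0$, $\langle\omega,a_i-b_{ij}\rangle>0$ had no solution, a Gordan/Motzkin alternative would yield nonnegative integers $c_{ij}$, not all zero, with $\sum_{i,j}c_{ij}(a_i-b_{ij})\leq 0$; then $\prod_{i,j}(t^{a_i})^{c_{ij}}$ divides $\prod_{i,j}(t^{b_{ij}})^{c_{ij}}$, whereas $t^{a_i}\succ t^{b_{ij}}$ together with multiplicativity of $\prec$ forces the first monomial to be strictly $\succ$ the second, contradicting $1\prec m$ and divisibility. This is precisely where the proof uses that $\prec$ is a term order rather than an arbitrary total order, and your sketch never does.

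To be fair, the gap is invisible in the paper's application: there $P$ is the toric ideal of $K[x^{v_1},\ldots,x^{v_q}]$ with $|v_i|=d$ for all $i$, so $P$ is homogeneous in the standard grading of $S$, the reduced Gr\"obner basis consists of homogeneous binomials with $|a_i|=|b_{ij}|$, and adding $c\mathbf{1}$ changes nothing --- the very case you note and handle correctly. But as a proof of the general statement being cited, the nonnegativity step is incomplete and needs the Farkas-type argument.
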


\begin{theorem}{\rm\cite[Theorem~8.3]{Stur1}}
\label{jun6-02} If ${\rm in}(P)={\rm in}_\omega(P)$, 
then
$$
\Delta=\{\sigma\vert\, \exists\, c\in\mathbb{R}^n
\mbox{ such that }\langle v_i,c\rangle=\omega_i 
\mbox{ if }t_i\in\sigma\, \mbox{ and }\, 
\langle v_i,c\rangle<\omega_i\mbox{ if }t_i\notin\sigma\}.
$$
\end{theorem}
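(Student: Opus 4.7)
My plan is to translate the condition $\sigma\in\Delta$ via the Stanley--Reisner correspondence into a combinatorial statement about the toric ideal $P$, and then deploy Farkas' lemma to produce the desired vector $c$. First I would observe that $\sigma\in\Delta$ is equivalent to $\prod_{i\in\sigma}t_i\notin{\rm rad}({\rm in}_\omega(P))$. Since ${\rm in}_\omega(P)={\rm in}_\prec(P)$ is a monomial ideal and $P$ is generated by binomials $t^a-t^{a'}$ with $Aa=Aa'$, the monomial generators of ${\rm in}_\omega(P)$ are exactly the $t^a$ admitting some $a'\in\mathbb{N}^q$ with $Aa=Aa'$ and $\langle\omega,a\rangle>\langle\omega,a'\rangle$. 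Hence $\sigma\in\Delta$ rephrases as: for every $a,a'\in\mathbb{N}^q$ with $Aa=Aa'$ and ${\rm supp}(a)\subseteq\sigma$, one has $\langle\omega,a\rangle\leq\langle\omega,a'\rangle$.

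The implication ``$\Leftarrow$'' is then a one-line computation: for any $c$ as in the statement and any $a,a'$ as above,
$$\langle\omega,a\rangle=\langle Aa,c\rangle=\langle Aa',c\rangle\leq\langle\omega,a'\rangle,$$
since $\langle v_i,c\rangle\leq\omega_i$ for all $i$ with equality on $\sigma$.

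For ``$\Rightarrow$'', assuming $\sigma\in\Delta$, I would apply Farkas' lemma to the affine system $\langle v_i,c\rangle=\omega_i$ for $i\in\sigma$ and $\langle v_j,c\rangle\leq\omega_j$ for $j\notin\sigma$: infeasibility would produce, after clearing denominators, integer vectors $a,a'$ violating the reformulated condition, so some $c$ with equality on $\sigma$ and weak inequalities off $\sigma$ must exist. The main obstacle is upgrading to \emph{strict} inequality off $\sigma$, and this is where the hypothesis ${\rm in}_\omega(P)={\rm in}_\prec(P)$ becomes essential. My strategy is a second Farkas argument by contradiction: if some $j\notin\sigma$ were to force $\langle v_j,c\rangle=\omega_j$ for every feasible $c$, one obtains $a,a'\in\mathbb{N}^q$ with $Aa=Aa'$, ${\rm supp}(a')\subseteq\sigma$, $j\in{\rm supp}(a)$, and $\langle\omega,a\rangle\leq\langle\omega,a'\rangle$. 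In the strict case $t^{a'}$ is already a monomial generator of ${\rm in}_\omega(P)$ with support in $\sigma$, contradicting $\sigma\in\Delta$; in the equality case $t^a-t^{a'}\in{\rm in}_\omega(P)$, and because this ideal is \emph{monomial}, $t^{a'}$ must belong to it individually, giving the same contradiction. A final convexity argument---taking a relative-interior point of the feasible polyhedron---then produces a single $c$ with strict inequality simultaneously at every $j\notin\sigma$, completing the identification of $\Delta$ with the regular triangulation described on the right-hand side.
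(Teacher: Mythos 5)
The paper does not prove this theorem; it is quoted directly from Sturmfels \cite[Theorem~8.3]{Stur1}, so there is no in-paper argument to compare against. Your proposal is a correct, self-contained proof, and its skeleton (translate $\sigma\in\Delta$ into a statement about $\omega$-weights of fibers $Aa=Aa'$, then run two rounds of Farkas/LP duality) is the standard route. A few points of precision are worth fixing. First, the set you describe, namely those $t^a$ for which there exists $a'\in\mathbb{N}^q$ with $Aa=Aa'$ and $\langle\omega,a\rangle>\langle\omega,a'\rangle$, is the set of \emph{all} monomials lying in ${\rm in}_\omega(P)$, not its set of minimal monomial generators; fortunately your radical reformulation only needs the former, so the argument goes through unchanged once the wording is corrected. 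Second, the hypothesis ${\rm in}_\omega(P)={\rm in}_\prec(P)$ is not used \emph{only} at the equality branch of the second Farkas step, as your sketch implies; it is already needed at the outset to guarantee that ${\rm in}_\omega(P)$ is a monomial ideal, without which the combinatorial reformulation of $\sigma\in\Delta$ is not even available. Third, in the ``$\Rightarrow$'' direction, the first Farkas step should explicitly record that the weak polyhedron $Q=\{c:\langle v_i,c\rangle=\omega_i\ (i\in\sigma),\ \langle v_j,c\rangle\le\omega_j\ (j\notin\sigma)\}$ is nonempty, since the affine Farkas certificate you invoke in the second step presupposes this; one then obtains, for each $j\notin\sigma$, a point $c_j\in Q$ that is strict at $j$, and the average $\bar c=\frac{1}{m}\sum_{j\notin\sigma}c_j$ (with $m=q-|\sigma|$) stays in $Q$ by convexity and is simultaneously strict at every $j\notin\sigma$, because each inequality is slack at $c_j$ and at worst tight at the other $c_{j'}$. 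With these details spelled out the argument is complete and proves exactly the cited statement.
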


Let $\omega=(\omega_i)\in\mathbb{N}^q$ be a vector that represents
the initial ideal ${\rm in}(P)={\rm in}_\prec(P)$ with respect to a
term order $\prec$, 
that is, ${\rm in}(P)={\rm
in}_\omega(P)$. Consider the primary decomposition of ${\rm rad}({\rm
in}(P))$ as 
an intersection of face ideals:
$$ 
{\rm rad}({\rm
in}(P))=\mathfrak{p}_1\cap\mathfrak{p}_2\cap\cdots\cap\mathfrak{p}_r,
$$
where $\mathfrak{p}_1,\ldots,\mathfrak{p}_r$ are ideals of 
$K[t_1,\ldots,t_q]$ generated by subsets of $\{t_1,\ldots,t_q\}$. 
Recall that the facets of $\Delta$ are given by
$$
F_i=\{t_j\vert\, t_j\notin\mathfrak{p}_i\},
$$
or equivalently by 
${\mathcal A}_i=\{v_j\vert\, t_j\notin\mathfrak{p}_i\}$ if one identifies
$t_i$ with $v_i$. According to Theorem~\ref{jun6-02}, the family
of facets (resp. cones or polytopes) of $\Delta$: 
$$
\{{\mathcal A}_1,\ldots,{\mathcal A}_r\} \ \ \
({\rm resp.}\ \{\mathbb{R}_+{\mathcal A}_1,\ldots,\mathbb{R}_+{\mathcal
A}_r\}\mbox{ or } \{{\rm conv}({\mathcal A}_1),\ldots,{\rm conv}({\mathcal
A}_r)\}),
$$
form a regular triangulation of ${\mathcal A}$ (resp. $\mathbb{R}_+{\mathcal
A}$ or 
${\rm conv}({\mathcal A})$). This means that 
$$
{\rm conv}({\mathcal A}_1),\ldots,{\rm conv}({\mathcal
A}_r)$$ 
are obtained by projection onto 
the first $n$ coordinates of the lower facets of 
$$
Q'={\rm conv}((v_1,\omega_1),\ldots,(v_q,\omega_q)).
$$
The regular triangulation $\{{\mathcal A}_1,\ldots,{\mathcal A}_r\}$ is called
{\it unimodular\/} if $\mathbb{Z}{\mathcal A}_i=\mathbb{Z}{\mathcal A}$ for
all $i$, where $\mathbb{Z}{\mathcal A}$ denotes the subgroup of
$\mathbb{Z}^n$ spanned by ${\mathcal A}$. 
A major result of Sturmfels \cite[Corollary~8.9]{Stur1} shows that this
triangulation is unimodular if and only if ${\rm in}(P)$ is
square-free. 

\medskip

We are interested in the following:

\begin{conjecture}\label{vila-mfmc-unimodular}\rm\ If $\mathcal C$ is a
uniform clutter that satisfies the max-flow 
min-cut property, then the rational polyhedral cone 
$\mathbb{R}_+\{v_1,\ldots,v_q\}$ has a unimodular regular 
triangulation. 
\end{conjecture}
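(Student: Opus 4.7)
The plan is to attack the conjecture via Sturmfels' criterion cited in the paragraph following Theorem~\ref{jun6-02}: a regular triangulation of $\mathcal{A}=\{v_1,\ldots,v_q\}$ is unimodular if and only if the corresponding initial ideal $\mathrm{in}_\prec(P)$ of the toric ideal $P$ is squarefree. Thus it suffices to produce a term order $\prec$ on $S=K[t_1,\ldots,t_q]$ for which $\mathrm{in}_\prec(P)$ is a squarefree monomial ideal; by Proposition~\ref{jun4-02} this corresponds to a weight vector $\omega\in\mathbb{N}^q$, and the resulting triangulation of $\mathbb{R}_+\mathcal{A}$ is automatically regular.

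Three powerful ingredients are at hand. By Theorem~\ref{mfmch->diag}, one has $\Delta_r(A)=1$ with $r=\mathrm{rank}(A)$ and $\mathcal{A}$ is a Hilbert basis of $\mathbb{R}_+\mathcal{A}$. By Proposition~\ref{nov20-03}, there is a partition $X=X_1\cup\cdots\cup X_d$ into pairwise disjoint minimal vertex covers with $|\mathrm{supp}(x^{v_i})\cap X_k|=1$ for all $i,k$, so the edges of $\mathcal{C}$ are transversals of this partition. Finally, by Theorem~\ref{noclu1}, MFMC is preserved under minors, which invites an inductive strategy on $|X|$.

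My concrete proposal is to fix a total order on $X$ refining the partition (say $X_1<X_2<\cdots<X_d$ with an arbitrary order inside each block), to define $\omega\in\mathbb{N}^q$ by a strictly convex evaluation of each $v_i$ against this ordering, and then to analyze the reduced Gr\"obner basis of $P$ with respect to the induced term order. For each binomial $t^\alpha-t^\beta\in P$ with $\sum\alpha_iv_i=\sum\beta_iv_i$, the Hilbert-basis property should guarantee that the intermediate lattice points arising in a Buchberger reduction remain in $\mathbb{N}\mathcal{A}$, while the $d$-partite transversal structure should let one localize each exchange to a small part of the partition, preventing squares in the leading monomials. As a fallback, one could run an induction over minors: assume the conjecture for all proper minors of $\mathcal{C}$, pick a vertex $x_i$, and attempt to lift a unimodular regular triangulation of $\mathcal{C}\setminus\{x_i\}$ to one of $\mathcal{C}$, using $\Delta_r(A)=1$ to verify unimodularity of any new simplex cone and the preserved partition structure to control regularity.

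The principal obstacle is that the passage from combinatorial data to squarefreeness of a toric initial ideal is notoriously subtle: no canonical term order is known to succeed for all uniform MFMC clutters. The balanced case (Theorem~\ref{balanced->urt}) exploits the absence of odd-order $2$-regular submatrices, and since uniform MFMC strictly generalizes balanced, any successful argument must introduce a new ingredient beyond the no-odd-bicycle condition. A secondary difficulty concerns the inductive step: although MFMC persists under minors, the partition supplied by Proposition~\ref{nov20-03} can change dramatically in a minor, so gluing triangulations of minors into a triangulation of $\mathcal{C}$ will require a compatibility statement that, in my view, constitutes the genuine technical heart of the conjecture.
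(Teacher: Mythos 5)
This statement is an open \emph{conjecture} in the paper (Conjecture~\ref{vila-mfmc-unimodular}); the authors do not prove it, and they present Theorem~\ref{balanced->urt} (the balanced case) only as supporting evidence. So there is no proof in the paper against which to compare your argument, and any complete ``proof'' you produced would in fact resolve an open problem.

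Your write-up correctly recognizes the right framework --- Sturmfels' correspondence between squarefree initial ideals and unimodular regular triangulations, Proposition~\ref{jun4-02} for converting term orders to weight vectors, Proposition~\ref{nov20-03} for the $d$-partition, and Theorem~\ref{mfmch->diag} for the Hilbert basis property --- but it does not close the gap, and you are candid about that. The specific step that is missing, and is not supplied by anything in the paper, is an actual verification that some concrete term order $\prec$ (e.g., one refining your block ordering of $X$) yields a squarefree $\operatorname{in}_\prec(P)$. The Hilbert basis property of $\mathcal{A}$ tells you that $K[F]$ is normal, hence that \emph{some} unimodular (covering) triangulation exists; but the conjecture asks for a \emph{regular} one, and normality alone is known not to force the existence of a squarefree initial ideal. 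Your ``strictly convex evaluation'' weight and the claim that Buchberger reductions stay inside $\mathbb{N}\mathcal{A}$ are plausible heuristics, but neither is justified, and the transversal structure by itself does not prevent leading terms from acquiring squares. Likewise, your fallback induction over minors would require a gluing/compatibility lemma for triangulations under vertex deletion that is not established anywhere. In short: the approach is a reasonable research program but not a proof, and since the statement is an open conjecture in the source, this should be expected rather than treated as a flaw in your reasoning.
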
 

\begin{example}\rm Let $u_1,\ldots,u_r$ be the characteristic
vectors of the collection of bases $\mathcal B$ of a transversal matroid
${\mathcal M}$. By \cite[Proposition~2.1 and
Theorem~{4.2}]{blum}, the toric ideal of the subring $K[x^{u_1},\ldots,x^{u_r}]$
has a square-free quadratic Gr\"obner basis. Therefore 
the cone $\mathbb{R}_+\{u_1,\ldots,u_r\}$ or the polytope ${\rm
conv}(u_1,\ldots,u_r)$ has a unimodular regular
triangulation. 
This gives support to 
Conjecture~\ref{vila-mfmc-unimodular} because the clutter 
$\mathcal B$ has the max-flow min-cut property \cite{reesclu}.  
\end{example}

\begin{definition}\rm A matrix $A$ with entries in $\{0,1\}$ is 
called {\it balanced\/} if $A$ has no square submatrix of odd order with exactly
two $1$'s in each row and column.  
\end{definition}

Recall that an integral matrix $A$ 
is $t$-{\it unimodular\/} if all the nonzero $r\times r$
sub-determinants of $A$ have 
absolute value equal to $t$, where 
$r$ is the rank of $A$. If $t=1$ the matrix is called 
{\it unimodular}. If $A$ is $t$-unimodular, then any regular triangulation of
$\mathbb{R}_+\{v_1,\ldots,v_q\}$ is unimodular \cite{Stur1}, see 
\cite[Proposition 5.20]{handbook} for a very short proof of this fact. 

The next result gives an interesting class of uniform clutters, coming
from combinatorial optimization, 
that satisfy Conjecture~\ref{vila-mfmc-unimodular}.  
This result is surprising because not 
all balanced matrices are $t$-unimodular. 

\begin{theorem}\label{balanced->urt} Let $A$ be a balanced matrix with distinct 
column vectors $v_1,\ldots,v_q$. If $|v_i|=d$ for all $i$, then any 
regular triangulation of the cone $\mathbb{R}_+\{v_1,\ldots,v_q\}$ 
is unimodular. 
\end{theorem}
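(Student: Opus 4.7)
The plan is to establish $\mathbb{Z}\mathcal{A}_i=\mathbb{Z}\mathcal{A}$ for each maximal facet $\mathcal{A}_i$ of the regular triangulation by applying Theorem~\ref{mfmch->diag}(a) twice---once to $A$ itself and once to the $n\times r$ submatrix $A_i$ whose columns are the elements of $\mathcal{A}_i$. Here $r={\rm rank}(A)$ coincides with the common size $|\mathcal{A}_i|$ because, in any triangulation of the $r$-dimensional cone $\mathbb{R}_+\mathcal{A}$, a maximal simplex is spanned by $r$ linearly independent columns of $A$.

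The essential external input is the Fulkerson--Hoffman--Oppenheim theorem on balanced matrices \cite{Schr2}, which asserts that whenever a $0$-$1$ matrix is balanced, the covering system $xA\geq\mathbf{1}$, $x\geq 0$ is TDI. Combining this with Theorem~\ref{noclu1} one concludes that $\mathcal{C}$ has the max-flow min-cut property, and Theorem~\ref{mfmch->diag}(a) then gives $\Delta_r(A)=1$. Next, since balancedness is obviously closed under deletion of rows and columns and the column sums of $A_i$ are still all equal to $d$, the submatrix $A_i$ is balanced and $d$-uniform. Deleting any all-zero rows of $A_i$ (which changes neither $\Delta_r$ nor the generated sublattice) yields the incidence matrix of a $d$-uniform clutter $\mathcal{C}_i$ that again satisfies the hypotheses of Fulkerson--Hoffman--Oppenheim. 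Applying the same chain to $\mathcal{C}_i$ produces $\Delta_r(A_i)=1$.

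Translating the two equalities $\Delta_r(A)=\Delta_r(A_i)=1$ into lattice-theoretic terms, they say precisely that $\mathbb{Z}\mathcal{A}$ and $\mathbb{Z}\mathcal{A}_i$ are saturated sublattices of $\mathbb{Z}^n$, because $\Delta_r=1$ forces every invariant factor of the relevant matrix to be $1$. Both sublattices have the same rank $r$, and $\mathbb{Z}\mathcal{A}_i\subseteq\mathbb{Z}\mathcal{A}$; since their $\mathbb{Q}$-spans coincide (each equals $\mathbb{Q}\mathcal{A}$) and every saturated sublattice equals the intersection of its $\mathbb{Q}$-span with $\mathbb{Z}^n$, we conclude $\mathbb{Z}\mathcal{A}_i=\mathbb{Z}\mathcal{A}$. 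Hence every maximal simplex is unimodular, proving that the regular triangulation is unimodular. The principal obstacle is the first step---invoking MFMC for balanced matrices from the literature---after which the rest is a routine double application of Theorem~\ref{mfmch->diag}(a) together with a standard saturation argument; interestingly, regularity of the triangulation itself is never used, so in fact any triangulation of $\mathbb{R}_+\mathcal{A}$ must be unimodular under the stated hypotheses.
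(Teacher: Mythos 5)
Your proof is correct and follows essentially the same route as the paper: invoke the Fulkerson--Hoffman--Oppenheim result that balanced matrices give TDI covering systems (hence the max-flow min-cut property via Theorem~\ref{noclu1}), apply Theorem~\ref{mfmch->diag}(a) to each simplex's incidence matrix $A_i$ to obtain $\Delta_r(A_i)=1$, and translate this into the lattice equality $\mathbb{Z}\mathcal{A}_i=\mathbb{Z}\mathcal{A}$. Two minor remarks: the application of Theorem~\ref{mfmch->diag}(a) to $A$ itself is redundant (once $\mathbb{Z}\mathcal{A}_i$ is known to be saturated of rank $r$ one already has $\mathbb{Z}\mathcal{A}_i=\mathbb{Q}\mathcal{A}\cap\mathbb{Z}^n\supseteq\mathbb{Z}\mathcal{A}$), and your observation that regularity of the triangulation is never used is accurate---the paper's argument likewise applies to any triangulation of the cone.
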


\begin{proof}  Let ${\mathcal A}=\{v_1,\ldots,v_q\}$ and let 
${\mathcal A}_1,\ldots,{\mathcal A}_m$ be the elements of a regular 
triangulation of $\mathbb{R}_+{\mathcal A}$. Then 
$\dim \mathbb{R}_+{\mathcal A}_i=\dim \mathbb{R}_+{\mathcal A}$ and 
${\mathcal A}_i$ is linearly independent for all $i$. Consider 
the clutter $\mathcal{C}$ whose edge ideal is
$I=(x^{v_1},\ldots,x^{v_q})$ and the subclutter ${\mathcal C}_i$ of $\mathcal
C$ whose edges correspond to the vectors in ${\mathcal A}_i$. Let $A_i$ be
the incidence matrix of $\mathcal{C}_i$. Since $A_i$ is
a balanced matrix, using \cite[Corollary~83.1a(iv), p.~1441]{Schr2},
we get that the subclutter ${\mathcal C}_i$ has the max-flow min-cut
property. Hence  
by Theorem~\ref{mfmch->diag} one has $\Delta_r(A_i)=1$, where $r$ is the
rank of $A_i$. Thus the invariant factors of $A_i$ are all equal to $1$
(see \cite[Theorem~3.9]{JacI}). Therefore by the fundamental
structure theorem of finitely generated abelian groups (see
\cite[p.~187]{JacI}) the group $\mathbb{Z}^n/\mathbb{Z}{\mathcal A}_i$ is 
torsion-free for all $i$. Notice that
$\dim\mathbb{R}_+\mathcal{A}={\rm rank}\, \mathbb{Z}\mathcal{A}$ and 
$\dim\mathbb{R}_+\mathcal{A}_i={\rm rank}\, \mathbb{Z}\mathcal{A}_i$
for all $i$. Since $r$ is equal to $\dim \mathbb{R}_+{\mathcal A}$,
it 
follows rapidly that the quotient group
$\mathbb{Z}{\mathcal A}/\mathbb{Z}{\mathcal A}_i$ is torsion-free and has 
rank $0$ for all $i$. Consequently 
$\mathbb{Z}{\mathcal A}=\mathbb{Z}{\mathcal A}_i$ for all $i$, i.e., 
the triangulation is unimodular. \end{proof}

If we do not require that $|v_i|=d$ for all $i$, this result is false
even if $K[F]$ is homogeneous, i.e., even if there is 
$x_0\in\mathbb{R}^n$ such that $\langle v_i,x_0\rangle=1$ 
for all $i$:

\begin{example}\label{balanced-nu-example}\rm Consider the following matrix 
\begin{small}
$$
A=\left(
\begin{array}{ccccccccccccc}
1 &0 &0 &0 &0 &0 &0 &0 &0&  0 &0 &1 &0 \\
0 &1 &0 &0 &0 &0 &0 &0 &0&  0 &0 &1 &0 \\
0 &0 &1 &0 &0 &0 &0 &0 &0&  0 &1 &0 &0 \\
0 &0 &0 &1 &0 &0 &0 &0 &0&  0 &1 &0 &0 \\
0 &0 &0 &0 &1 &0 &0 &0 &0&  1 &0 &0 &0 \\
0 &0 &0 &0 &0 &1 &0 &0 &0&  1 &0 &0 &0 \\
0 &0 &0 &0 &0 &0 &1 &0 &0&  1 &0 &0 &1 \\
0 &0 &0 &0 &0 &0 &0 &1 &0&  0 &1 &0 &1 \\
0 &0 &0 &0 &0 &0 &0 &0 &1&  0 &0 &1 &1 \\
0 &0 &0 &0 &0 &0 &0 &0 &0&  1 &1 &1 &1 
\end{array}
\right)
$$
\end{small}
Let $v_1,\ldots,v_{13}$ be the columns of $A$. It is not hard to see
that the matrix 
$A$ is balanced. Using {\it
Macaulay\/}$2$ 
\cite{mac2} it is seen that 
the regular triangulation $\Delta$ of
$\mathbb{R}_+\{v_1,\ldots,v_{13}\}$ determined by using the 
GRevLex order, on the polynomial ring $K[t_1,\ldots,t_{13}]$, has a
simplex, namely $\{v_1,\ldots,v_{6},v_{10},\ldots,v_{13}\}$, which is not
unimodular.
\end{example}

\bigskip

\noindent
{\bf Acknowledgments.} We thank the referees for their
careful reading of the paper and for the improvements that
they suggested.

\bibliographystyle{plain}

\begin{thebibliography}{10}

\bibitem{blum}{S. Blum, Base-sortable matroids and Koszulness of
semigroup rings, European J. Combin. {\bf 22(7)} (2001), 937--951.}

\bibitem{BHer}{W. Bruns and J. Herzog, 
{\em Cohen-Macaulay Rings\/},  Cambridge University 
Press, Cambridge, Revised Edition, 1997.}

\bibitem{normaliz2} W. Bruns and B. Ichim, \textsc{Normaliz 2.0}, 
Computing
normalizations of affine 
semigroups 2008. Available from 
{\tt http://www.math.uos.de/normaliz}.

\bibitem{CC}{M. Conforti and G. Cornu\'ejols, 
Clutters that pack and the Max-Flow Min-Cut property: A conjecture, 
{\it The Fourth Bellairs Workshop on Combinatorial Optimization\/} 
(W. R. Pulleyblank, F. B. Shepherd, eds.).}


\bibitem{cornu-book}{G. Cornu\'ejols, {\it Combinatorial optimization{\rm:} 
Packing and covering\/}, CBMS-NSF Regional Conference Series in Applied 
Mathematics {\bf 74}, SIAM (2001).}

\bibitem{ehrhart}{C. Escobar, J.
Mart\'\i nez-Bernal and R. H. Villarreal, Relative volumes and minors
in monomial subrings, Linear Algebra Appl. {\bf 374} (2003), 275--290.}

\bibitem{normali} C. Escobar, R. H. Villarreal and Y. Yoshino, Torsion
freeness and normality of blowup rings of monomial ideals, 
{\it Commutative Algebra\/}, Lect. Notes Pure Appl. Math. 
{\bf 244}, Chapman \& Hall/CRC, Boca Raton, FL, 2006, pp. 69-84. 


\bibitem{faridi1} S. Faridi, Cohen-Macaulay properties of square-free 
monomial ideals, J. Combin. Theory Ser. A {\bf 109(2)} (2005),
299-329.

\bibitem{reesclu}{I. Gitler, E. Reyes and R. H. Villarreal, 
Blowup algebras of square--free monomial ideals and some links to
combinatorial optimization problems, 
Rocky Mountain J. Math. {\bf 39} (2009), no. 1, 71--102.} 

\bibitem{bounds} I. Gitler and C. Valencia, Bounds for 
invariants of edge-rings, Comm. Algebra {\bf 33} (2005), 1603-1616.   

\bibitem{clutters}{I. Gitler, C. Valencia and R. H. Villarreal, 
A note on Rees algebras and the MFMC property, Beitr\"age Algebra Geom. {\bf 48}
(2007), No. 1, 141-150.}

\bibitem{mac2}{D. Grayson and M. Stillman, 
{\em Macaulay\/}$2$, 1996. 
Available via anonymous ftp from {\tt math.uiuc.edu}.}

\bibitem{Har}{F. Harary, {\it Graph Theory\/}, Addison-Wesley, 
Reading, MA, 1972.}

\bibitem{HuSV}{C. Huneke, A. Simis and W. V. Vasconcelos, Reduced
normal cones are domains, Contemp.  Math. {\bf 88} (1989),
95--101.}

\bibitem{JacI}{N. Jacobson, {\it Basic Algebra I\/}, Second Edition,
W.~H.~Freeman and Company, New York, 1996.}

\bibitem{lehman} A. Lehman, On the width-length inequality and degenerate 
projective planes, in {\it Polyhedral Combinatorics\/} (W. Cook and P.
Seymour Eds.) DIMACS Series in Discrete Mathematics and Theoretical
Computer Science {\bf 1}, Amer. Math. Soc., 1990, pp.~101-105. 

\bibitem{Mat}{H. Matsumura, {\it Commutative Algebra\/}, 
Benjamin-Cummings, Reading, MA, 1980.} 

\bibitem{oxley}{J. Oxley, {\it Matroid Theory\/}, Oxford 
University Press, Oxford, 1992.} 

\bibitem{Schr2} {A. Schrijver,{\it Combinatorial Optimization\/}, 
Algorithms and Combinatorics {\bf 24}, Springer-Verlag, Berlin, 2003.}

\bibitem{ITG}{A. Simis, W.~V. Vasconcelos and R. H. Villarreal, On 
the ideal theory of graphs, J. Algebra, {\bf 167} 
(1994), 389--416.}

\bibitem{Sta2}{R. Stanley, {\it Combinatorics and Commutative
Algebra\/}, Birkh{\"a}user Boston, 2nd ed., 1996.}

\bibitem{Stur1}{B. Sturmfels, {\em Gr\"obner Bases and Convex
Polytopes\/}, University Lecture Series {\bf 8}, American 
Mathematical Society, Rhode Island, 1996.}  


\bibitem{bookthree} W. V. Vasconcelos, {\it Integral Closure\/}, Springer
Monographs in Mathematics, Springer, New York, 2005.  


\bibitem{monalg}{R. H. Villarreal, {\it Monomial Algebras\/}, 
Monographs and 
Textbooks in Pure and Applied Mathematics {\bf 238}, Marcel 
Dekker, New York, 2001.} 

\bibitem{handbook} R. H. Villarreal, Monomial Algebras and Polyhedral
Geometry,  
in {\it Handbook of Algebra, Vol. 3\/} (M. Hazewinkel Ed.), Elsevier,
Amsterdam, 2003, pp. 257--314.

\bibitem{perfect} R. H. Villarreal, Rees algebras and polyhedral cones of ideals 
of vertex covers of  perfect graphs, 
J. Algebraic Combin. {\bf 27(3)} (2008), 293--305.

\bibitem{matrof} R. H. Villarreal, Rees cones and monomial rings of matroids, 
Linear Algebra Appl. {\bf 428} (2008), 2933-2940.

\bibitem{webster}{R. Webster, {\it Convexity\/}, Oxford 
University Press, Oxford, 1994.}

\end{thebibliography}

\end{document}